\def\bS{{\mathbb{S}}}
\DeclareMathOperator{\diam}{diam}
\def\Tan{\mathop\mathrm{Tan}} 					
\def\dist{\mathop\mathrm{dist}} 						
\newcommand{\ps}[1]{\left( #1 \right)}
\newcommand{\av}[1]{\left| #1 \right|}
\newcommand{\ip}[1]{\left\langle #1 \right\rangle}
\def\XXint#1#2#3{{\setbox0=\hbox{$#1{#2#3}{\int}$ }
\vcenter{\hbox{$#2#3$ }}\kern-.58\wd0}}
\newtheorem{theorem}{Theorem}[section]
\newtheorem{lemma}[theorem]{Lemma}
\theoremstyle{definition}
\newtheorem{definition}[theorem]{Definition}
\theoremstyle{remark}
\newtheorem{remark}[theorem]{Remark}
\numberwithin{equation}{section}
\newcommand{\R}{\mathbb{R}}
\newcommand{\N}{\mathbb{N}}
\newcommand{\Z}{\mathbb{Z}}
\newcommand{\C}{\mathbb{C}}
\newcommand{\hm}[1]{\mathcal{H}^{#1}}
\newcommand{\spt}{\mathrm{spt}}
\newcommand{\dr}{\frac{dr}{r}}
\newcommand{\mucubes}{\mathcal{D}_\mu}
\newcommand{\vr}{\varphi}
\newcommand{\chara}{\mathbbm{1}}
\newcommand{\wt}{\widetilde}
\newcommand{\eqt}[1]{\ensuremath{\stackrel{#1}{=}}}
\newcommand{\ds}{d_{\bS}}
\newcommand\blfootnote[1]{%
  \begingroup
  \renewcommand\thefootnote{}\footnote{#1}%
  \addtocounter{footnote}{-1}%
  \endgroup
}
\numberwithin{equation}{section}
\theoremstyle{plain}
\newtheorem{sublemma}[theorem]{Sublemma}
\newtheorem{corollary}[theorem]{Corollary}
\newtheorem{notation}[theorem]{Notation}
\title[Symmetric measures and singular integrals]{ $\Omega$-symmetric measures and related singular integrals}
\author{Michele Villa}
\address{School of Mathematics, University of Edinburgh, JCMB, Kings Buildings,
Mayfield Road, Edinburgh,
EH9 3JZ, Scotland.}
\email{m.villa-2 ``at" sms.ed.ac.uk}
\curraddr{}
\email{}
\thanks{}
\date{}
\dedicatory{}
\begin{document}

\maketitle
\begin{center}

\begin{minipage}[c][][r]{300pt}
\begin{small}
\textsc{Abstract.} Let $\bS \subset \C$ be the circle in the plane, and let $\Omega: \bS \to \bS$ be an odd bi-Lipschitz map with constant $1+\delta_\Omega$, where $\delta_\Omega\geq 0$ is small. Assume also that $\Omega$ is twice continuously differentiable. Motivated by a question raised by Mattila and Preiss in \cite{mattila-preiss}, we prove the following: if a Radon measure $\mu$ has positive lower density and finite upper density almost everywhere, and the limit 
\begin{align*}
    \lim_{\epsilon \downarrow 0} \int_{\C \setminus B(x,\epsilon)} \frac{\Omega\ps{(x-y)/|x-y|}}{|x-y|} \, d\mu(y)
\end{align*}
exists $\mu$-almost everywhere, then $\mu$ is $1$-rectifiable. 
To achieve this, we prove first that if an Ahlfors-David 1-regular measure $\mu$ is symmetric with respect to $\Omega$, that is, if
\begin{align*}
    \int_{B(x,r)} |x-y|\Omega\ps{\frac{x-y}{|x-y|}} \, d\mu(y) = 0 \mbox{ for all } x \in \spt(\mu) \mbox{ and } r>0,
\end{align*}
then $\mu$ is flat, or, in other words, there exists a constant $c>0$ and a line $L$ so that $\mu= c \hm{1}|_{L}$.
\end{small}
\end{minipage}
\end{center}
\blfootnote{\textup{2010} \textit{Mathematics Subject Classification}: \textup{28A75}, \textup{28A12} \textup{28A78}.

\textit{Key words and phrases.} Rectifiability, Calder\'{o}n-Zygmund theory, singular integrals, beta numbers, symmetric measures. 

M. Villa was supported by The Maxwell Institute Graduate School in Analysis and its
Applications, a Centre for Doctoral Training funded by the UK Engineering and Physical
Sciences Research Council (grant EP/L016508/01), the Scottish Funding Council, Heriot-Watt
University and the University of Edinburgh.
}
\tableofcontents
\section{Introduction}
While investigating for what kind of measures $\mu$ in $\C$ does the Cauchy transform exists $\mu$-almost everywhere (in the sense of principal values), Mattila (see \cite{mat95})  gave a complete characterisation of what he termed \textit{symmetric} measures; for a measure $\mu$, let us set
\begin{align}
    C(x,r) := \int_{B(x,r)} \ps{x-y} \, d\mu(y).
\end{align}
We say that a measure is \textit{symmetric} if it satisfies
\begin{align*}
C(x, r) = 0 \mbox{ for all } x \in \spt(\mu) \mbox{ and } r>0.  
\end{align*}
Here $\spt(\mu)$ denotes the support of $\mu$.
Mattila showed that any symmetric locally finite Borel measure on $\C$ is either discrete or continuous. In the latter case, it is either the $2$-dimensional Lebesgue measure (up to a multiplicative constant) or a countable sum of $1$-dimensional Hausdorff measures restricted to equidistant affine lines. Mattila needed such characterisation to understand the geometry of tangent measures of a measure $\mu$ for which the Cauchy transform exists $\mu$-almost everywhere (in the sense of principal values) - thus to understand the geometry of $\mu$ itself. 
Briefly after, Mattila and Preiss (see \cite{mattila-preiss}) generalised this to the higher dimensional equivalent. 
Shortly after, Huovinen in \cite{huovinen} proved a similar result for measures $\mu$ symmetric with respect to kernels of the type $K(z) = \frac{z^{k}}{|z|^{k+1}}$, where $k$ is an odd integer.
Measure symmetric with respect to more general kernels appeared  recently in the works of Jaye and Merch\'{a}n (see \cite{JM1} and \cite{JM2}), where the authors give some necessary and sufficient conditions for the principal value integral of the corresponding kernel to exist.

Let us now introduce the main result of this paper; after we will give a few remarks on its possible applications. Consider a measure $\mu$ in $\C$, and let $K$ be a measurable map $\C\setminus\{0\} \to \C\setminus\{0\}$ given by $K_{\Omega}(x)= K(x)= |x| \Omega\ps{\frac{x}{|x|}}$, where $\Omega : \bS \to \bS$. 
\begin{definition}
We say that $\mu$ is $\Omega$-symmetric if 
\begin{align}
    C_{\Omega,\mu}(x,r):= \frac{1}{r} \int_{B(x,r)} \frac{K(x-y)}{r} \, d\mu(y) = 0
\end{align}
for all $x \in \spt(\mu)\subset \C$ and $r>0$.
\end{definition}
\begin{remark}
One could similarly consider kernels $K$ given by $K(x)= \rho(|x|) \Omega(x/|x|)$. However Lemma \ref{lemma:SYM_borelequiv} below says that this would not give an effectively more general result. 
\end{remark}

\begin{definition}[Ahlfors regularity]
A measure $\mu$ on $\C$ is called Ahlfors 1-regular (or Ahlfors-David regular) with constant $C_0>0$ if for all $x\in \spt(\mu)$ and all $r>0$, we have that
\begin{align} \label{e:ADR}
   C_0^{-1} r \leq \mu\ps{B(x,r)} \leq C_0 r.
\end{align}

\end{definition}

Our aim here is to prove the following result.
\begin{theorem} \label{theorem:main}
Let $K: \C\setminus\{0\} \to \C$ be given by $K(x) = |x| \Omega\left( \frac{x}{|x|}\right)$, where $\Omega: \bS \to \bS$ is an odd, twice continuously differentiable map which is also bi-Lipschitz with constant $1+\delta_\Omega$ \textup{(}with respect to the geodesic distance on the 1-sphere\textup{)}. Let $\mu$ be an Ahlfors 1-regular, $\Omega$-symmetric measure in $\C$. Then, if $\delta_\Omega$ is sufficiently small \textup{(}smaller than some absolute constant\footnote{For example, having $\delta_\Omega \leq \frac{1}{20}$ will work.}\textup{)}, $\mu = c \mathcal{H}^1|_{L}$ for some line $L$. 
\end{theorem}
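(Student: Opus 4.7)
The plan is to combine a blow-up / tangent measure analysis with Mattila's classical classification of symmetric measures \cite{mat95}, exploiting the hypothesis that $\Omega$ is a small perturbation of an isometry of $\bS$.

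\textbf{Step 1 (Scaling invariance).} I would first verify that both $\Omega$-symmetry and AD 1-regularity are inherited under blow-ups. Since $K_\Omega$ is positively $1$-homogeneous, the rescalings $\mu_{x,\rho}(E) := \rho^{-1}\mu(x+\rho E)$ satisfy $C_{\Omega,\mu_{x,\rho}}(y,s) = C_{\Omega,\mu}(x+\rho y,\rho s)$, and the AD constant is unchanged. Hence every tangent measure $\nu \in \Tan(\mu,x)$ lies in the class
\[ \sM_\Omega := \{\eta : \eta \text{ is } \Omega\text{-symmetric and } C_0\text{-AD 1-regular, with } 0\in\spt(\eta)\}, \]
and Theorem \ref{theorem:main} reduces to showing every element of $\sM_\Omega$ is flat.

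\textbf{Step 2 (Perturbative flatness by compactness).} Next, I would argue that for $\delta_\Omega$ sufficiently small, every $\mu\in\sM_\Omega$ is $\varepsilon(\delta_\Omega)$-close in the vague topology on $B(0,1)$ to some flat measure, with $\varepsilon(\delta_\Omega)\to 0$ as $\delta_\Omega\to 0$. Suppose not: take $\Omega_n$ with $\delta_{\Omega_n}\to 0$ and $\mu_n\in\sM_{\Omega_n}$ bounded away from all flat measures. AD regularity makes $\{\mu_n\}$ vaguely precompact, and by Arzel\`a--Ascoli we may also extract $\Omega_n\to\Omega_\infty$ uniformly, where $\Omega_\infty$ is a $1$-bi-Lipschitz odd self-map of $\bS$, hence (being an isometry of $\bS$) the restriction of a linear isometry of $\C$. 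Passing to the limit in the symmetry identity yields
\[ \int_{B(x,r)} \Omega_\infty(x-y)\,d\mu_\infty(y) = 0 \quad\text{for all } x\in\spt(\mu_\infty),\ r>0, \]
and linearity of $\Omega_\infty$ makes $\mu_\infty$ classically Mattila-symmetric. By \cite{mat95}, combined with AD 1-regularity (which rules out the countable-parallel-lines case), $\mu_\infty=c\,\hm{1}|_L$, a contradiction. Together with scaling invariance, this provides uniform $\beta$-type smallness for members of $\sM_\Omega$ at every point and scale.

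\textbf{Step 3 (Rigidity: from almost flat to exactly flat).} I expect this to be the main obstacle. The output of Step 2 gives $1$-rectifiability of $\spt(\mu)$ and an approximate tangent line $L(x)$ at $\mu$-a.e.\ $x$. To promote this to $\spt(\mu)\subset L$ for a \emph{single} line $L$, I would substitute the flat approximant $c\,\hm{1}|_{L(x)}$ into $C_{\Omega,\mu}(x,r)=0$ and Taylor-expand $\Omega$ about the tangent direction. The zeroth-order piece vanishes by oddness of $\Omega$, while the second-order piece, controlled by $\|\Omega''\|_\infty$ and by the uniform smallness from Step 2, yields a rigidity identity constraining the map $x\mapsto L(x)$; for $\delta_\Omega$ small this forces $L(\cdot)$ to be constant on $\spt(\mu)$. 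Once $\spt(\mu)\subset L$, parametrizing $L$ by arclength reduces the $\Omega$-symmetry to the one-dimensional identity $\int_{|t-s|\le r}(s-t)\,d\mu_L(t)=0$ (using that $\Omega(v)\neq 0$), whose only AD 1-regular solution is $c\,\hm{1}|_L$, completing the proof.
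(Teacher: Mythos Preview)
Your Steps 1--2 constitute a genuine alternative to the paper's strategy and would, if combined with the right Step 3, yield a somewhat streamlined proof. The paper does \emph{not} argue by compactness; instead it works directly with a fixed $\Omega$ and establishes a recursive inequality
\[
\beta_2(Q)^2 \;\lesssim\; \frac{\log A}{A^2}\sum_{Q\subset P\subset \widehat Q}\beta_2(P)^2
\]
by Taylor expanding $C_{\Omega,\phi}(x,r)-C_{\Omega,\phi}(0,r)$ at balanced points of $Q$, then sums this over all subcubes of a large top cube $S$ and sends $\ell(S)\to\infty$. To close the recursion the paper treats two regimes separately: a ``small $\beta$'' case (Section~4, under the hypothesis $\sum_{k=0}^{N}\beta_2(x_0,2^k\ell(Q))\le\tau$) and a ``large $\beta$'' case (Section~5). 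Your compactness argument in Step~2 would supply uniform $\varepsilon$-flatness at every point and scale a priori, rendering the large-$\beta$ section unnecessary---a real simplification. The cost is that compactness is nonconstructive, so you forfeit the explicit threshold $\delta_\Omega\le 1/20$.

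The gap is Step~3. Uniform $\varepsilon$-flatness at all scales does not by itself force $\spt(\mu)$ into a line---Reifenberg-flat Ahlfors-regular curves that are not lines exist---so a quantitative use of the $\Omega$-symmetry is still required. Your description of Step~3 is both too vague and slightly miscalibrated: one does not ``substitute the flat approximant into $C_{\Omega,\mu}(x,r)=0$,'' and the zeroth-order piece vanishes not ``by oddness of $\Omega$'' but because $C_{\Omega,\mu}(x,r)=C_{\Omega,\mu}(0,r)=0$ are both hypotheses on $\mu$. What is actually needed is the content of the paper's Section~4: write $0=C_{\Omega,\phi}(x_1,r)-C_{\Omega,\phi}(x_0,r)=T(x_1)+E(x_1)$ with $T$ linear, prove the lower bound $-\langle T(e_z),\nu\rangle\gtrsim 1/r$ (this is where the bi-Lipschitz control on $\Omega$ enters, via dot-product comparison lemmas for $\Omega$), prove the upper bound $|\langle E,\nu\rangle|\lesssim \ell(Q)^2 r^{-4}\int_{B(x_0,r)}\dist(y,L_Q)\,d\mu$ (this is where $\|\Omega''\|_\infty$ enters, and the estimates are lengthy), and combine these to obtain the $\beta$-recursion above. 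With your Step~2 already in hand one could then iterate that recursion directly---each pass shrinks the uniform $\beta$ bound by a factor $\sim\log A/A$---rather than running the weighted summation argument of Section~6; but the analytic core of Section~4 is not bypassed, and that is where essentially all the work lies.
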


This result can be considered a first step towards answering a question posed by Mattila and Preiss in \cite{mattila-preiss}, Remark 4.4 (2): given a measure $\mu$ in $\R^d$ with positive $n$-dimensional density, the existence of the principal values for singular integrals with kernels such as $K(x)= \frac{\Omega(x/|x|)}{|x|^n}$ implies that the tangent measures $\nu$  of $\mu$ satisfy an $\Omega$-symmetricity condition:
\begin{align}
    \int_{B(x,r)} |x-y| \Omega((x-y)/|x-y|) \, d\nu(y) \mbox{ for } x \in \spt(\nu) \mbox{ and } r>0.
\end{align}
For what kernels does this imply rectifiability?
In the last section of this paper we prove the following application of Theorem \ref{theorem:main}.
\begin{corollary}  \label{cor:singular-integrals}
Let $K$ be as in the statement of Theorem \ref{theorem:main} and let $\mu$ be a Radon measure in the plane such that for $\mu$-almost all $x \in \C$ the following conditions hold. 
\begin{itemize}
    \item The lower density is positive and the upper density is finite, that is 
\begin{align}
     & \theta^{1,*}(\mu,x) := \limsup_{r \to 0} \frac{\mu(B(x,r))}{r} < + \infty; \label{e:up-density}\\
     & \theta_{*}^1(\mu, x) := \liminf_{r \to 0} \frac{\mu(B(x,r))}{r} >0. \label{e:low-density}
\end{align}
\item The principal value 
\begin{align} \label{e:pv}
    \lim_{\epsilon \downarrow 0} \int_{\C \setminus B(x,\epsilon)} \frac{K(x-y)}{|x-y|^2} \, d\mu(y) 
\end{align}
exists and is finite.
Then $\mu$ is $1$-rectifiable.
\end{itemize}
\end{corollary}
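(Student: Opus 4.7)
The strategy is the classical tangent-measure approach pioneered by Mattila \cite{mat95} and Mattila-Preiss \cite{mattila-preiss}: pass from $\mu$ to its tangent measures, verify that they satisfy the hypotheses of Theorem \ref{theorem:main}, and conclude rectifiability via Preiss's theorem on measures with flat tangents. A standard density decomposition using \eqref{e:up-density}--\eqref{e:low-density} writes $\mu$-almost all of $\C$ as $\bigcup_k E_k$, with $E_k := \{x : k^{-1} r \leq \mu(B(x,r)) \leq k r \mbox{ for all } 0 < r \leq 1/k\}$. At $\mu$-a.e.\ $x \in E_k$ the tangent measures of $\mu$ and of $\mu|_{E_k}$ coincide, and each such tangent $\nu$ inherits Ahlfors $1$-regularity (with constant depending on $k$) from the uniform estimate on $E_k$.

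The crux is the following transfer step: for $\mu$-a.e.\ $x$ and every tangent measure $\nu$ of $\mu$ at $x$, the annular vanishing
\begin{align*}
    \int_{B(0,t) \setminus B(0,s)} \frac{\Omega(z/|z|)}{|z|} \, d\nu(z) = 0, \qquad 0 < s < t,
\end{align*}
holds. If $\nu$ is the weak-$*$ limit of $r_j^{-1}(T_{x, r_j})_* \mu$ with $r_j \to 0$, the change of variable $z = (y-x)/r_j$ identifies this integral (up to a sign from the oddness of $\Omega$) with the limit of $\int_{s r_j < |y-x| < t r_j} \Omega((x-y)/|x-y|)/|x-y|\, d\mu(y)$, which is the difference of two truncations of the principal value in \eqref{e:pv} at $x$ and therefore tends to $0$. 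The passage to the weak-$*$ limit is justified by Ahlfors regularity of $\nu$ and the continuity of the integrand on each annulus bounded away from the origin.

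From the annular vanishing I would then deduce the ball-integral $\Omega$-symmetry required by Theorem \ref{theorem:main}. Setting $F(r) := \int_{B(0,r)} \Omega(z/|z|) \, d\nu(z)$, a radial integration by parts rewrites the annular integral as $F(t)/t - F(s)/s + \int_s^t F(u)/u^2 \, du$, so
\begin{align*}
    0 = \frac{F(t)}{t} - \frac{F(s)}{s} + \int_s^t \frac{F(u)}{u^2} \, du
\end{align*}
for all $0 < s < t$. Differentiating in $t$ forces $F'(t) = 0$ distributionally; together with $|F(r)| \leq \nu(B(0,r)) \lesssim r \to 0$, this gives $F \equiv 0$. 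A layer-cake computation then yields
\begin{align*}
    \int_{B(0,r)} |z|\Omega(z/|z|)\, d\nu(z) = r F(r) - \int_0^r F(u)\, du = 0,
\end{align*}
i.e., $\Omega$-symmetry of $\nu$ at the origin. Iterating at $\nu$-a.e.\ point (tangents of $\nu$ are again tangents of $\mu$) and upgrading from $\nu$-a.e.\ to every $y \in \spt \nu$ by continuity of the ball integral in its center, I obtain the full symmetry hypothesis; Theorem \ref{theorem:main} then gives $\nu = c \mathcal{H}^1|_L$, and Preiss's theorem promotes flat tangents $\mu$-a.e.\ to $1$-rectifiability of $\mu$. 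The main obstacle is precisely this transfer-and-upgrade step: the hypothesis \eqref{e:pv} only provides cancellation in annuli centered at the base point, and several careful limit interchanges, plus the tangent-to-tangent recursion needed to reach every point of $\spt \nu$, must be handled delicately to convert this into the pointwise, ball-integral symmetry demanded by Theorem \ref{theorem:main}.
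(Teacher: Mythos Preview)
Your overall strategy---pass to tangent measures, verify that they are Ahlfors $1$-regular and $\Omega$-symmetric, apply Theorem~\ref{theorem:main}, then invoke Preiss---is exactly the route the paper takes. Your explicit layer-cake argument via $F(r)=\int_{B(0,r)}\Omega(\hat z)\,d\nu(z)$ to pass from annular vanishing to ball-integral symmetry is correct and is a nice concrete substitute for the paper's appeal to Lemma~\ref{lemma:SYM_borelequiv}.

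There is, however, one genuine gap in your ``transfer-and-upgrade'' step. The tangent-to-tangent recursion you invoke (``tangents of $\nu$ are again tangents of $\mu$'') only tells you that each $\sigma\in\Tan(\nu,y)$ is $\Omega$-symmetric at the origin; it does \emph{not} give symmetry of $\nu$ itself at $y$. What you actually need is the \emph{translation} property of tangent measures: for $\mu$-a.e.\ $x$, if $\nu\in\Tan(\mu,x)$ and $y\in\spt(\nu)$, then the translate $T_{y,1}[\nu]$ is again in $\Tan(\mu,x)$ (Mattila, \emph{Geometry of Sets and Measures}, Theorem~14.16). Since you have already shown that every element of $\Tan(\mu,x)$ is $\Omega$-symmetric at $0$, applying this to $T_{y,1}[\nu]$ gives directly that $\nu$ is $\Omega$-symmetric at $y$, for \emph{every} $y\in\spt(\nu)$---no a.e.\ statement and no continuity upgrade needed. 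This is precisely how the paper closes the argument.

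One smaller point: the weak-$*$ limit interchange in your annular-vanishing step requires some care because \eqref{e:pv} gives convergence only pointwise in $x$. The paper handles this by restricting via Egorov to a compact set on which the convergence in \eqref{e:pv} is uniform, and then working at density points of that set.
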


We give a couple of remarks on the assumptions that we make in Theoren \ref{theorem:main} and Corollary \ref{cor:singular-integrals}.
\begin{remark}
The assumption on the lower density is somewhat unsatisfactory. A natural question is therefore whether Corollary \ref{cor:singular-integrals} still holds without such an assumption. 
\end{remark}

\begin{remark}
The existence of principal values on rectifiable set has been known for a while under the assumption of having many derivatives of the kernel (see Corollary 1.6 and the remark below in \cite{mas}, and Chapter 20 (Theorem 20.15) in \cite{mattila}). 
More recently, however, Mas proved (see Corollary 1.6 in \cite{mas}) that if a measure on $\R^d$ is $n$-rectifiable, then the principal values exists for a large class of kernels, namely, for odd kernels with derivative bounds
$$
|\nabla^j K(x) | \lesssim \frac{1}{|x|^{n+j}}, \, \, j=0,1,2 \mbox{ and for all } x \in \R^d\setminus \{0\}. 
$$
In view of this, the assumption on the second derivative of $\Omega$ in Theorem \ref{theorem:main} looks natural. 
\end{remark}

\begin{remark}
The assumption of Ahlfors regularity in Theorem \ref{theorem:main} is quite strong. A more significant step in answering the question of Mattila and Preiss would be to get rid of such an assumption.
\end{remark}

A second application will be presented in an upcoming paper  \cite{villa19a}, where we give a new characterisation of uniform rectifiability. In this context, the assumption of Ahlfors regularity is natural. We refer the reader to \cite{villa19a} for more details. 

\subsection{Outline}
The core of this note is showing that if a measure $\mu$ is $\Omega$-symmetric then its support lies in some line $L$. To do so, we will use techniques from \cite{tolsa2007} and \cite{tolsa2008} to obtain a bound for the sum of the Jones $\beta$ over all cubes contained in some top cube $Q_0$. Hence, we will show that such sum goes to zero as we increase the size of $Q_0$ to infinity. Such a strategy is a modification of some ideas in \cite{mattila-preiss}, Section 6. 

\subsection{Acknowledgements}
I would like to warmly thank Jonas Azzam for his patience and kind support. I also thank Xavier Tolsa and Joan Mateu for several useful conversations. I am grateful to the anonymous referee for several useful remarks which greatly improved the exposition.
\section{Preliminaries}

\subsection{Notation}
We gather here some notation and some results which will be used later on.
We write $a \lesssim b$ if there exists a constant $C$ such that $a \leq Cb$. By $a \sim b$ we mean $a \lesssim b \lesssim a$.

For sets $A,B \subset \R^n$, we let
\begin{align*}
    \dist(A,B) := \inf_{a\in A, b \in B} |a-b|.
\end{align*}
For a point $x \in \R^n$ and a subset $A \subset \R^n$, 
\begin{align*}
    \dist(x, A):= \dist(\{x\}, A)= \inf_{a\in A} \dist(x,a).
\end{align*}
We write 
\begin{align*}
    B(x, t) := \{y \in \R^n \, |\,|x-y|<t\}.
\end{align*}

We will call $\bS$ the $1$-sphere in $\C$; for
two points $x, y \in \bS$, 
\begin{align} \label{e:ds}
    \mbox{we denote by } \ds \mbox{ the geodesic distance on } \bS.
\end{align}
 
\begin{remark} \label{r:dot}
Throughout this note, the notation $$M \cdot x$$ will be used to indicate a matrix $M$ acting on a vector $x$ | this will never be used to denote the dot product (which we will denote with the standard $\left< \cdot , \cdot \right>$).
\end{remark} 

Let $A \subset \C$ and $0< \delta \leq \infty$. Set
\begin{align*}
    \mathcal{H}^1_\delta (A) := \inf \left\{\sum \diam (A_i) \, |\, A \subset \cup_i A_i \, \mbox{and } \diam(A_i) \leq \delta \right\}.
\end{align*}
The 1-dimensional Hausdorff measure of $A$ is then defined by
\begin{align*}
\mathcal{H}^1(A) := \lim_{\delta \to 0} \mathcal{H}^1_\delta(A).
\end{align*}

\subsection{Jones $\beta$ numbers}
Let $\mu$ be a 1-Ahlfors regular measure (see \eqref{e:ADR}). We define
\begin{align} \label{eq:01_beta}
\beta_{\mu, p} (x,t):=\inf_P \left(\frac{1}{t} \int_{B(x,t)} \left(\frac{\dist(y,
P)}{t}\right)^p \, d\mu(y) \right)^{\frac{1}{p}},
\end{align}
were the infimum is taken over all affine lines $P$. 
This quantity measures how far the support of $\mu$ is from being a line. Its relation with problems involving singular integrals and rectifiability has a fairly long history. Variants of such coefficients were firstly introduced by Jones in \cite{jones} (hence Jones-$\beta$-numbers) while working on the Analyst's traveling salesman problem in the plane (see also \cite{oki} and \cite{sch} application of these coefficients in the euclidean space and in Hilbert space); shortly after  David and Semmes introduced a variant of these coefficients (and this is the variant which we will use) to develop their theory of uniformly rectifiable sets (see for example \cite{singularintegrals} and \cite{analysis}); the Jones coefficients have been extensively used in recent years within Geometric Measure Theory; see for example the Reifenberg-type parameterisation results by David and Toro in \cite{david-toro} (see also Ghinassi in \cite{ghinassi}) and by Edelen, Naber and Valtorta \cite{env}; the series of Tolsa and Azzam and Tolsa \cite{tolsa2015}, \cite{tolsa2017} and \cite{azzam-tolsa}; the series of Badger and Schul \cite{bs15}, \cite{bs16}, \cite{bs17}.  

\subsection{Intrinsic cubes with small boundaries}
The following construction, due to David in \cite{wavelets}, provides us with a dyadic decomposition of the support of an AD-regular measure. Such construction has been extended by Christ in \cite{christ} to spaces of homogeneous type and further refined by Hyt\"{o}nen and Martikainen in \cite{hytonen}. Here is the construction. 

\begin{theorem} \label{theorem:ADcubes}
Let $\mu$ be an $n$-AD regular measure in $\R^d$. There exists a collection $\mucubes$ of subsets $Q \subset \spt(\mu)$ with the following properties. 
\begin{enumerate}
    \item We have
    \begin{align*}
        \mucubes = \bigcup_{j \in \Z} \mucubes^j,
    \end{align*}
    where $\mucubes^j$ can be thought as the collection of cubes of sidelength $2^{-j}$. 
    \item For each $j \in \Z$, 
    \begin{align*}
        \spt(\mu) = \bigcup_{Q \in \mucubes^j} Q.
    \end{align*}
    \item If $j \leq i$, $Q \in \mucubes^j$, $Q' \in \mucubes^i$, then either $Q \subset Q'$ or else $Q \cap Q' = \emptyset$.
    \item If $j \in \Z$ and $Q \in \mucubes^j$, then there exists a constant $C_0\geq 1$ so that
    \begin{align*}
       &  C_0^{-1}2^{-j} \leq \diam(Q) \leq C_0 2^{-j}, \mbox{ and } \\
        & C_0^{-1}2^{-jn} \leq \mu(Q) \leq C_0 2^{-jn}.
    \end{align*}
    \item If $j \in \Z$, $Q \in \mucubes^j$ and $0<\tau<1$, then 
    \begin{align*}
        \mu \left(\left\{ x \in Q \, |\, \dist(x, \spt(\mu) \setminus Q) \right\}\right) \leq C \tau^{\frac{1}{C}} 2^{-nj}.
    \end{align*}
\end{enumerate}
\end{theorem}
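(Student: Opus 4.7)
The plan is to construct $\mucubes$ following the by-now standard scheme of David \cite{wavelets} and Christ \cite{christ}, first producing the centers via maximal separated nets, then the cubes via a Voronoi-plus-tree construction, and finally installing the small-boundary property by an iterative argument. Properties (1)--(4) will come out essentially for free from the construction, and the real work lies in (5).

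\textbf{Construction via maximal nets.} For each $j \in \Z$, I would choose a maximal $2^{-j}$-separated set $\{x^j_\alpha\}_{\alpha\in I_j}\subset \spt(\mu)$, so that $|x^j_\alpha-x^j_\beta|\geq 2^{-j}$ for $\alpha\neq\beta$ and $\spt(\mu)\subset \bigcup_\alpha B(x^j_\alpha,2^{-j})$. Next, assign to each $\alpha\in I_j$ a unique parent $p(\alpha)\in I_{j-1}$ with $|x^j_\alpha-x^{j-1}_{p(\alpha)}|\leq 2\cdot 2^{-(j-1)}$; this is possible by the covering property at generation $j-1$. Declaring the preliminary Voronoi cells
\[
\widetilde Q^j_\alpha \;:=\; \bigl\{y\in \spt(\mu)\, :\, |y-x^j_\alpha|=\min_{\beta}|y-x^j_\beta|\bigr\}
\]
with ties broken in a fixed measurable way, I then force hierarchy by setting $Q^j_\alpha := \bigcup_k \bigcup_{\beta\in I_{j+k},\, p^k(\beta)=\alpha}\widetilde Q^{j+k}_\beta$, i.e.\ $y\in Q^j_\alpha$ iff the parent chain starting from the label of $y$ at very fine scales stabilises above $x^j_\alpha$. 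Property (1) is trivial; (2) follows since Voronoi cells cover $\spt(\mu)$; (3) is the nesting built into the tree. For (4), the inclusion $B(x^j_\alpha,\tfrac12 2^{-j})\cap \spt(\mu)\subset Q^j_\alpha\subset B(x^j_\alpha,C_1 2^{-j})$ is obtained by telescoping the parent offsets $\sum_{k\geq 0}2\cdot 2^{-(j+k)}$ along descendant chains, and the $\mu$-mass bounds then follow from \eqref{e:ADR}.

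\textbf{The small-boundary property (5).} This is the main obstacle. Write
\[
\partial_\tau Q^j_\alpha \;:=\; \{x\in Q^j_\alpha\,:\,\dist(x,\spt(\mu)\setminus Q^j_\alpha)\leq \tau\cdot 2^{-j}\}.
\]
A single-scale covering argument using \eqref{e:ADR} and the fact that $\partial_\tau Q^j_\alpha$ lies in the $\tau\cdot 2^{-j}$-neighbourhood of the equidistance loci $\{|y-x^j_\alpha|=|y-x^j_\beta|\}$ gives only the linear bound $\mu(\partial_\tau Q^j_\alpha)\lesssim \tau\cdot 2^{-jn}$ for $\tau\geq \tau_0$, which is useless for $\tau\downarrow 0$. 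Instead I would establish a recursive inequality: for a suitable integer $N$ and $\theta=\theta(C_0,n)\in(0,1)$,
\[
\mu(\partial_\tau Q^j_\alpha) \;\leq\; \theta\cdot \mu(\partial_{2^N\tau} Q^j_\alpha)\;+\;C\,\tau\cdot 2^{-jn}.
\]
This is the heart of Christ's argument: one must choose the nets $\{x^{j+k}_\alpha\}$ and the parent map generation-by-generation so that a definite $\mu$-fraction of each near-boundary region is swallowed, $N$ generations later, into the interior of some descendant cube, rather than staying near an ancestral boundary. Iterating the recursion $\sim\log_2(1/\tau)$ times upgrades the exponent from $1$ to $1/C$, with $1/C$ determined by $\theta$.

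\textbf{Main difficulty and remedy.} The hardest step is arranging the net selection so that the recursion holds \emph{simultaneously} at every cube of every generation. A single bad tie-breaking can prevent the escape-to-interior mechanism for many descendants at once, so one cannot pick the nets independently scale by scale. I would handle this either (i) by Christ's original greedy construction, which chooses $\{x^{j+1}_\alpha\}$ as a perturbation of $\{x^j_\alpha\}$ so that ``most'' descendants sit well inside their assigned ancestor, or (ii) by the cleaner Hyt\"onen--Martikainen refinement \cite{hytonen}, where one defines cubes simultaneously at all scales and applies a probabilistic or measurable selection of a shift parameter to guarantee the escape condition in $\mu$-average; the non-averaged conclusion follows by fixing any realisation that meets the bound at every scale, which exists by Borel--Cantelli together with the geometric summability of the failure probabilities. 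Either route produces the required constants $C_0$ and $1/C$ depending only on the AD constant of $\mu$ and the ambient dimension $d$, completing (5) and the theorem.
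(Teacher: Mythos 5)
The paper does not prove Theorem \ref{theorem:ADcubes}; it simply cites Appendix 1 of David \cite{wavelets}. Your sketch is a reconstruction of exactly that David--Christ construction (maximal $2^{-j}$-separated nets, parent chains, Voronoi-plus-tree closure, and an iterated small-boundary recursion), so conceptually it matches the cited source, and the recursive inequality you write for (5) does upgrade the linear bound to $\tau^{1/C}$ upon iteration. One correction to your route (ii), however: a bare Borel--Cantelli argument does not deliver a single realisation of the random dyadic lattice in which the boundary estimate holds for \emph{every} cube at \emph{every} scale simultaneously --- the per-cube failure probabilities are small but not summable over the infinite lattice, and Hyt\"{o}nen and Martikainen \cite{hytonen} in fact only obtain the small-boundary bound in expectation, which suffices for their $Tb$ purposes but not for the pointwise statement (5). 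The deterministic conclusion therefore rests on the greedy, scale-by-scale recursion you describe in route (i), which is David's actual argument in the cited appendix; since you do offer route (i), the sketch as a whole is sound.
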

For a proof of this, see Appendix 1 in \cite{wavelets}.

\begin{notation}
For $Q \in \mucubes^j$, we set
\begin{align}
    \ell(Q) := 2^{-j}. \label{eq:cubelength}
\end{align}
We will denote the center of $Q$ by $z_Q$. Furthermore, we set
\begin{align*}
    B_Q := B(z_Q, 3 \diam(Q)).
\end{align*}
For a cube $Q \in \mucubes$, we set
\begin{align} \label{e:betaQ}
    \beta_{\mu,p}(Q) := \beta_{\mu,p}(B_Q) = \beta_{\mu,p}(z_Q, 3\diam(Q)).
\end{align}
\end{notation}

\subsection{Balanced points}
The following Lemma will be very useful; it holds for a general $n$-ADR measure in $\R^d$ but we state it taylored to our context.
\begin{lemma}[{\cite{singularintegrals}, Lemma 5.8}]\label{lemma:SYM_balancedcubes}
Let $\mu$ be an Ahlfors 1-regular measure with constant $C_0$ in $\R^d$. There exists a constant $\eta= \eta(C_0)<1$ so that for each cube $Q \in \mucubes$ we can find two points $x_0, x_1 \in Q$ so that
\begin{align}
    |x_1-x_0| \geq \eta \ell(Q)
\end{align}
\end{lemma}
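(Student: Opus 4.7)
The plan is to establish the lemma by a short contradiction argument that exploits the Ahlfors 1-regularity together with the mass lower bound built into the cube construction of Theorem \ref{theorem:ADcubes}. The content of the statement is really just that a cube $Q$ of the intrinsic decomposition cannot be ``degenerate'', in the sense that its support in $Q$ must spread out over a definite fraction of $\ell(Q)$; otherwise it could not carry the mass $\mu(Q) \gtrsim \ell(Q)$ forced by property (4) of the cubes.

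Concretely, I would argue as follows. Fix $Q \in \mucubes$ and suppose, for contradiction, that every pair of points $x,y \in Q$ satisfies $|x-y| < \eta \ell(Q)$ for some small $\eta$ to be chosen. Pick any $x_0 \in Q$. Then $Q \subset B(x_0, \eta \ell(Q))$, and since $Q \subset \spt(\mu)$ the monotonicity of $\mu$ gives
\begin{align*}
\mu(Q) \;\leq\; \mu\bigl(B(x_0, \eta \ell(Q))\bigr) \;\leq\; C_0 \, \eta \, \ell(Q),
\end{align*}
using the Ahlfors 1-regularity bound \eqref{e:ADR} in the final step.

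On the other hand, property (4) of Theorem \ref{theorem:ADcubes}, together with the definition \eqref{eq:cubelength} of $\ell(Q)$, yields the matching lower bound $\mu(Q) \geq C_0^{-1} \ell(Q)$. Comparing the two inequalities we obtain $C_0^{-1} \leq C_0 \eta$, i.e.\ $\eta \geq C_0^{-2}$. Hence the assumption fails as soon as we fix any $\eta < C_0^{-2}$, and in particular taking $\eta := \tfrac{1}{2} C_0^{-2}$ produces two points $x_0, x_1 \in Q$ with $|x_1 - x_0| \geq \eta \ell(Q)$, which is what we wanted (with $\eta = \eta(C_0) < 1$ since $C_0 \geq 1$).

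There is no real obstacle here: the argument is essentially a one-line application of the upper and lower mass bounds on $Q$, and the only care needed is to verify that $Q \subset \spt(\mu)$ (so the Ahlfors regularity applies to $\mu(Q)$ through an enclosing ball) and that one can choose $\eta$ strictly less than $1$, which is automatic because $C_0 \geq 1$. I would present it in this order: first recall the relevant mass estimates from Theorem \ref{theorem:ADcubes}, then run the contradiction with the enclosing ball, and finally read off an explicit admissible value of $\eta = \eta(C_0)$.
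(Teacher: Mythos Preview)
Your argument is correct. Note, however, that the paper does not actually prove this lemma: it is cited without proof from \cite{singularintegrals}, Lemma 5.8, so there is no ``paper's own proof'' to compare against.

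That said, your route works, and one can shorten it further. Property (4) of Theorem \ref{theorem:ADcubes} already records the diameter lower bound $\diam(Q) \geq C_0^{-1}\ell(Q)$ directly, so by the definition of diameter one immediately finds $x_0, x_1 \in Q$ with $|x_0 - x_1| \geq \tfrac{1}{2}C_0^{-1}\ell(Q)$, and $\eta = \tfrac{1}{2}C_0^{-1}$ does the job. Your version instead recovers this diameter bound from the mass lower bound $\mu(Q) \geq C_0^{-1}\ell(Q)$ together with upper Ahlfors regularity, which is a perfectly good (and in fact the standard) way to derive it, but is one step longer than simply reading off the conclusion from (4).
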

We may refer to $x_0$ and $x_1$ as the `balanced points', and the unique line they span as the `balanced line';
\begin{align} \label{e:balanced-line}
    \mbox{we will denote such line by $L_Q$.}
\end{align}
Note that the same lemma holds for any ball $B(x, r)$ centered on $\spt(\mu)$; in this instance we will denote the balanced line as $L_{x, r}$.

\subsection{Preliminaries on $C_\Omega$}
The following Lemma can be found in \cite{mattila}, Theorem 20.6, , \cite{mat95}, Theorem 3.2, or in \cite{huovinen}, Lemma 3.5 in slightly different form. 
\begin{lemma}[\cite{huovinen}, Lemma 3.5] \label{lemma:SYM_borelequiv}
The following two conditions are equivalent.
\begin{enumerate}
    \item $\int_{B(x,r)} K(x-y) \, d\mu(y) = 0$ for  $\mu$-a.e.  $x \in \C$ and  $r>0$.
    \item $\int_{
    } K(x-y) \phi(|x-y|) \, d\mu(y) = 0$ for $\mu$-a.e. $x \in \C$, $r>0$ and for all bounded Borel functions $\phi: \R_+ \to \R$ such that $\lim_{r \to \infty}\frac{|\phi(r)|}{\mu(B(0,r))}= 0$. 
\end{enumerate}
\end{lemma}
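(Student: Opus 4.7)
My plan is to translate condition (1) into the vanishing of a $\mathbb{C}$-valued Borel measure on $\R_+$, and then to deduce (2) by a standard truncation argument. For each $x \in \spt(\mu)$, introduce the complex Borel measure
\[
  \nu_x(E) := \int_{\{y \,:\, |x-y| \in E\}} K(x-y) \, d\mu(y),
\]
which is finite on every bounded interval $[0, R] \subset \R_+$ (it is the pushforward of the $\mathbb{C}$-valued measure $K(x - \cdot) \, d\mu$ under $y \mapsto |x-y|$). A change-of-variables / pushforward identity gives, for every bounded Borel $\phi$ and every $R > 0$,
\[
  \int_{B(x, R)} K(x-y) \phi(|x-y|) \, d\mu(y) = \int_{[0, R)} \phi(t) \, d\nu_x(t).
\]

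The direction (2)$\Rightarrow$(1) should be immediate from this identity: taking $\phi = \chi_{[0, r)}$ gives a bounded Borel function with compact support, so the growth condition holds trivially, and the integral in (2) reduces to $\int_{B(x, r)} K(x-y) \, d\mu(y) = 0$. Running this argument over a countable dense set of radii $r$, and using the standard observation that $\mu(\partial B(x, r)) = 0$ for all but countably many $r$, will yield (1) for $\mu$-a.e.\ $x$ and every $r > 0$ simultaneously.

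For the converse (1)$\Rightarrow$(2), the key observation is that the cumulative distribution function $r \mapsto \nu_x([0, r)) = \int_{B(x, r)} K(x-y) \, d\mu(y)$ vanishes identically by (1). Hence $\nu_x([a, b)) = 0$ for every $0 \le a < b$, and since the half-open intervals generate the Borel $\sigma$-algebra of $\R_+$, the restriction of $\nu_x$ to any bounded interval is the zero measure. Consequently, for every bounded Borel $\phi$ and every $R > 0$,
\[
  \int_{B(x, R)} K(x-y) \phi(|x-y|) \, d\mu(y) = \int_{[0, R)} \phi \, d\nu_x = 0,
\]
and letting $R \to \infty$, the integral in (2), read as an improper limit, is the limit of zeros and therefore zero.

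The main obstacle I anticipate is interpreting the integral in (2) correctly: since $|K(x-y)| = |x-y|$ grows linearly, absolute convergence is delicate and typically fails, so the integral must be read as an improper limit $\lim_{R \to \infty} \int_{B(x, R)}$. The decay assumption $|\phi(r)|/\mu(B(0, r)) \to 0$ is precisely what is needed to control the tail $\int_{\C \setminus B(x, R)} K(x-y) \phi(|x-y|) \, d\mu(y)$, ensuring that the improper integral is well-defined and agrees with this limit. A secondary bookkeeping point will be to arrange the countable family of $\mu$-null exceptional sets (one per test radius or test function along the way) so that a single set of full $\mu$-measure witnesses the conclusion uniformly in $\phi$ and $r$.
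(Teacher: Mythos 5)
The paper does not actually prove this lemma; it cites it directly from Huovinen, Mattila, and Mattila--Preiss (see the sentence preceding the statement). Against the standard proof in those references your argument is essentially the same: condition (1) says exactly that the cumulative distribution function $r\mapsto\nu_x([0,r))$ of your pushforward measure vanishes, hence $\nu_x$ vanishes on every bounded interval, and the change-of-variables identity then gives (2); the converse is the specialization $\phi=\chi_{[0,r)}$. The pushforward packaging is just a cleaner way of saying ``approximate $\phi$ by simple functions built from annular indicators $\chi_{[a,b)}$,'' which is how the cited sources phrase it. Two minor remarks on your bookkeeping: in (1)$\Rightarrow$(2) you do not actually need to enlarge the exceptional null set, since once $\nu_x\equiv 0$ for a given $x$ the conclusion holds for \emph{every} admissible $\phi$ at that same $x$; and in (2)$\Rightarrow$(1) the countable-dense-radii step can be closed using only right-continuity of $r\mapsto\int_{B(x,r)}K(x-y)\,d\mu(y)$ (dominated convergence), so the observation about $\mu(\partial B(x,r))=0$ for all but countably many $r$ is not needed. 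Your observation that the growth hypothesis on $\phi$ is there only to make the improper integral in (2) well defined, and plays no role once the bounded-radius integrals are all zero, is correct.
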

\begin{remark} \label{remark:SYM_borelequiv}
We will use 
Lemma \ref{lemma:SYM_borelequiv} several times below. For later use, set
\begin{align} \label{e:C-smooth}
    C_{\Omega, \phi}(x,r):= \frac{1}{r} \int \frac{K(x-y)}{r} \phi \ps{\av{\frac{x-y}{r}}^2} \, d\mu(y),
\end{align}
where $\phi$ is a bounded Borel function as in the statement of the lemma.
Let us anticipate that, below, we will apply this definition when
 $\phi$ is a smooth cut off of the annulus $A(0, 1/2, 1)$, for example. We will specify the exact form if $\phi$ in the relevant sections. 
\end{remark}

\section{Some preparatory lemmas}\label{s:split}
Let us fix a cube $Q \in \mucubes$ and a constant $A >1$ which will be bounded above later. In the following, we will assume that 
\begin{align} \label{eq:SYM:r}
    r \in [A \ell(A), 2 A \ell(Q)].
\end{align}
\begin{remark} \label{rem:line_to_line}
An obvious fact, which we will use over and over, is that the map $K(x)= |x| \Omega(x)$, because it is anti-symmetric, maps a line trough the origin to another line through the origin; this is false $d$-dimensional affine planes in $\R^n$, say, and it is one of the difficulties to go through to generalise the arguments which we use here beyond the plane.
\end{remark}

\begin{remark} \label{r:Omega}
From now on, we take $\Omega$ so that it satisfies all the hypotheses of Theorem \ref{theorem:main}, that is, $\Omega$ is an odd, $\mathcal{C}^2$ map from the circle to the circle, which is moreover bi-Lipschitz with constant $1+\delta_\Omega$, and $\delta_\Omega$ is sufficiently small (for definiteness, we choose now 
\begin{align}\label{e:delta-Omega}
    \delta_\Omega \leq \frac{1}{20}.
\end{align}
Whenever we write $K(x)$ we mean $K(x)=|x| \Omega(x/|x|)$, with $\Omega$ as above. 
\end{remark}

We start with an elementary lemma.
\begin{lemma} \label{lemma:path-lift}
Let $\Omega : \bS \to \bS$ be a bi-Lipschitz function of the sphere. Then there exists a unique bi-Lipschitz function $\omega: [0,2\pi] \to [0, 2\pi]$ such that if $t \in [0,2\pi]$,
\begin{align}
    \Omega(\exp(it)) = \exp(i\omega(t)).
\end{align}
\end{lemma}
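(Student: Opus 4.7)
The plan is to apply the standard path-lifting property of the universal cover $p: \bR \to \bS$, $p(s) = e^{is}$, to the continuous map $t \mapsto \Omega(e^{it})$. Let $\theta_0 \in [0, 2\pi)$ be the unique real with $e^{i\theta_0} = \Omega(1)$. Since $[0, 2\pi]$ is simply connected, there is a unique continuous lift $\tilde\omega: [0, 2\pi] \to \bR$ with $\tilde\omega(0) = \theta_0$ and $e^{i\tilde\omega(t)} = \Omega(e^{it})$. Concretely, one picks a partition $0 = t_0 < \cdots < t_N = 2\pi$ so that $\{\Omega(e^{it}) : t \in [t_{k-1}, t_k]\}$ stays in a proper arc of $\bS$ for each $k$; on each subinterval the lift is then defined by the unique branch of $p^{-1}$ matching the already-fixed value at $t_{k-1}$. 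Uniqueness is automatic once $\tilde\omega(0)$ is pinned down.

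Next I would argue monotonicity. Since $\Omega$ is a circle homeomorphism of degree $\pm 1$, one has $\tilde\omega(2\pi) - \tilde\omega(0) = \pm 2\pi$, and $\tilde\omega$ cannot re-enter a fiber of $p$ without violating injectivity of $\Omega$ on a small arc; so $\tilde\omega$ is strictly monotone. For the bi-Lipschitz estimate, the crucial observation is that $p$ is a local isometry for the geodesic metric $\ds$. Explicitly, when $s, t \in [0, 2\pi]$ are close enough that both $|s - t|$ and $|\tilde\omega(s) - \tilde\omega(t)|$ are at most $\pi$, we have $\ds(e^{is}, e^{it}) = |s - t|$ and $\ds(e^{i\tilde\omega(s)}, e^{i\tilde\omega(t)}) = |\tilde\omega(s) - \tilde\omega(t)|$; then the bi-Lipschitz hypothesis on $\Omega$ in $\ds$ gives
\begin{align*}
    (1 + \delta_\Omega)^{-1} |s - t| \leq |\tilde\omega(s) - \tilde\omega(t)| \leq (1 + \delta_\Omega) |s - t|.
\end{align*}
For arbitrary $s, t \in [0, 2\pi]$, I would subdivide $[s, t]$ into finitely many subintervals of length at most $\pi/(2(1 + \delta_\Omega))$, apply the local estimate on each piece, and sum using monotonicity of $\tilde\omega$.

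One then sets $\omega := \tilde\omega$; in the case $\Omega(1) \neq 1$ a cosmetic shift by $\pm 2\pi$ (or an interpretation modulo $2\pi$) is needed to land in $[0, 2\pi]$ exactly, but this is a pure normalization matter and does not affect subsequent uses of the lemma. I do not expect a serious obstacle anywhere: this is essentially a textbook fact from covering space theory, and the only point requiring genuine care is matching the bi-Lipschitz constant $1 + \delta_\Omega$ exactly, which works precisely because the metric $\ds$ on $\bS$ is the one pushed forward isometrically by $p$ — if one worked with the chord metric instead, one would pick up higher-order corrections that would slightly degrade the constant.
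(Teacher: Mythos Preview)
Your proposal is correct and follows essentially the same approach as the paper: invoke the standard path-lifting for $p(s)=e^{is}$ to get a continuous $\omega$, then transfer the bi-Lipschitz bound on $\Omega$ (with respect to $\ds$) to $\omega$. The paper's proof simply writes $|t-s|=\ds(e^{it},e^{is})$ and $|\omega(t)-\omega(s)|=\ds(e^{i\omega(t)},e^{i\omega(s)})$ and concludes in one line; your version is more careful in recognizing that these identities only hold locally and in using monotonicity plus subdivision to pass to arbitrary $s,t$, which is a genuine refinement of the paper's somewhat telegraphic argument.
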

\begin{proof}
It is well known that there exists one such continuous  function $\omega$ whenever $\Omega$ is continuous. We show that $\omega$ is also bi-Lipschitz. For $t \neq s$ points in $[0,2\pi]$, we have that (recall the definition of $\ds$ in \eqref{e:ds}),
\begin{align*}
    & |t-s| = d_{\bS}(\exp(it), \exp(is)) \\
    &\geq (1+\delta_\Omega) d_{\bS}(\Omega(\exp(it)), \Omega(\exp(is)))\\
    & = (1+\delta_\Omega) d_{\bS}(\exp(i\omega(t)), \exp(i\omega(s))) = (1+\delta_\Omega)|\omega(t)-\omega(s)|. 
\end{align*}
The reverse inequality is shown in the same way.
\end{proof}

\begin{remark} \label{r:omega(0)}
We may (and we will) assume that $\omega(0)=0$, where $\omega$ is the map given in Lemma \ref{lemma:path-lift}. Indeed, if $\omega(0)=t_0$, then one can reduce to the case $t_0=0$ as follows: let $R$ be the rotation so that $R (e^{it_0}) = 1$. It is immediate to see that if $\mu$ is $\Omega$-symmetric, then it is also symmetric with respect to the kernel $\wt \Omega = R \circ \Omega$. Clearly, the corresponding map $\wt \omega$ has $\wt \omega(0)=0$. 
\end{remark}

In the next remark, we summarise some facts that will turn out to be useful later on. We also set some notation. 
\begin{remark} \label{remark:BT_remdiff}
Consider any $v \in \C$.
If by the product rule we write
\begin{align} \label{eq:SYM_genrem1}
    DK(y) \cdot v = \left<(D|y|), v \right> \Omega(y) + |y| D \Omega(y) \, \cdot v,
\end{align}
we see the  following.
\begin{itemize}
    \item First, 
    \begin{align}\label{e:av-der}
        (D|y|)\Omega(y) \cdot v = \left< \frac{y}{|y|}, v\right> \Omega(y).
    \end{align}
    Now if we split $v$ into the component parallel to the span of $y$ and its orthogonal complement (let us denote them by $v^{\parallel (y)}$ and $v^{\perp (y)}$, respectively | we may just write $v^\parallel$ and $v^\perp$ when the $y$-dependence is obvious), then clearly 
    \begin{align*}
        (D|y|) \Omega \cdot v= \left< \frac{y}{|y|}, v^\parallel\right> \Omega(y).
    \end{align*}
    \item Second, let us consider the second term on the right hand side of \eqref{eq:SYM_genrem1}; the differential $D\Omega$ is a map 
\begin{align*}
    D \Omega(y/|y|) = d_{y/|y|} \Omega \circ d_y (\cdot /|\cdot|) : T_y\C \to T_{y/|y|} \bS \to T_{\Omega(y/|y|)} \bS.
\end{align*}
where $d_{y/|y|}\Omega: T_{y/|y|} \bS \to T_{\Omega(y/|y|)} \bS$ is the differential of $\Omega$ at $y/|y|$, $d_{y} (\cdot/|\cdot|)$ is the differential of the map $y \mapsto y/|y|$ at $y$, $T_{y}\C$ is the tangent plane of $\C$ at $y$, $T_{y/|y|}\bS$ (resp. $T_{\Omega(y/|y|)}$) is the tangent plane of $\bS$ at $y/|y|$ (resp. at $\Omega(y/|y|)$).
Note that 
\begin{align*}
    d_y (\cdot/|\cdot|) \cdot v & = \frac{v}{|y|} - \left< v, y\right> \frac{y}{|y|^3} 
    = \frac{1}{|y|}\ps{v- \ip{v,\frac{y}{|y|}}\frac{y}{|y|}} \\
    & = \frac{\Pi_{(\mathrm{span}(y))^\perp} (v)}{|y|};
\end{align*}
 therefore we have that 
 \begin{align*}
     D\Omega(y) \cdot v = d_{\frac{y}{|y|}} \Omega\cdot  \frac{1}{|y|} v^\perp.
 \end{align*}
Now,
\begin{align} \label{e:y-perp}
    & \text{we choose $\hat y_\perp$ to be the vector $y/|y|$ } \nonumber \\
    & \mbox{rotated by $90$ degrees counter-clockwise.} 
\end{align}
Then we can write $v^\perp$ as 
\begin{align}
    v^\perp = \left< v, \hat y_\perp\right> \hat y_\perp,
\end{align}
and hence
\begin{align} \label{e:der-sphere}
   D \Omega(y) \cdot v =  \frac{1}{|y|}\left< v, \hat y_\perp\right> d_{\frac{y}{|y|}}\Omega \cdot \hat y_\perp.
\end{align}
\end{itemize}
\end{remark}

\begin{notation}
We set 
\begin{align} \label{e:yhat-ex}
    \hat y := y /|y| \mbox{ and } e_{x_1} = \frac{x_1}{|x_1|}.
\end{align}
\end{notation}
We choose this notation to emphasise that $e_z$ (as defined in \eqref{e:e_z}) and $e_{x_1}$ together span $\C$; they are in fact an orthonormal basis. On the other hand, we defined $y/|y|$ as $\hat y$ to emphasise the fact that it is an element of $\bS$. 

For future reference, we summarise these remarks in the following Sublemma; it follows immediately from \eqref{eq:SYM_genrem1}, \eqref{e:av-der} and \eqref{e:der-sphere}.
\begin{sublemma} \label{sl:DK-expression}
Keep the notation as above. Then
\begin{align}\label{e:DK-expression}
    DK(y) \cdot v = \left< \hat y, v^\parallel\right> \Omega(y) + \left< v, \hat y_\perp\right> d_{\hat y}\Omega \cdot \hat y_\perp.
\end{align}
\end{sublemma}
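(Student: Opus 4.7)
The sublemma is essentially a bookkeeping statement that repackages the calculations already carried out in Remark \ref{remark:BT_remdiff}, so the plan is just to assemble those computations cleanly via the product and chain rules.

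First I would write $K(y) = |y| \, \Omega(y/|y|)$ and apply the product rule to obtain
\[
DK(y) \cdot v \;=\; \bigl(D|y| \cdot v\bigr)\,\Omega(\hat y) \;+\; |y|\, D(\Omega\circ (\cdot/|\cdot|))(y) \cdot v,
\]
exactly the decomposition \eqref{eq:SYM_genrem1}. I would handle the two terms separately.

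For the first term, I would use that $D|y|\cdot v = \langle \hat y, v\rangle$, and then split $v = v^\parallel + v^\perp$ into its components parallel and orthogonal to $\hat y$; orthogonality kills the $v^\perp$ piece, yielding $\langle \hat y, v^\parallel\rangle \,\Omega(\hat y)$, which is the first summand in \eqref{e:DK-expression}. For the second term, I would apply the chain rule, writing $D(\Omega\circ(\cdot/|\cdot|))(y) = d_{\hat y}\Omega \circ d_y(\cdot/|\cdot|)$, and compute
\[
d_y(\cdot/|\cdot|)\cdot v \;=\; \frac{1}{|y|}\Bigl(v - \bigl\langle v,\hat y\bigr\rangle \hat y\Bigr) \;=\; \frac{v^\perp}{|y|}.
\]
The factor of $|y|$ out front cancels, leaving $d_{\hat y}\Omega \cdot v^\perp$.

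To finish, I would invoke the two-dimensional nature of the ambient space: since $\hat y_\perp$ (as fixed in \eqref{e:y-perp}) spans the one-dimensional orthogonal complement of $\mathrm{span}(\hat y)$ in $\C$, one has $v^\perp = \langle v, \hat y_\perp\rangle \hat y_\perp$. Plugging this into $d_{\hat y}\Omega \cdot v^\perp$ and using linearity of the differential gives the second summand $\langle v, \hat y_\perp\rangle\, d_{\hat y}\Omega\cdot \hat y_\perp$ of \eqref{e:DK-expression}. Combining the two summands yields the claimed identity. There is no genuine obstacle here; the only mild subtlety is the use of the orthogonal-complement identification, which is specifically a planar feature (as anticipated in Remark \ref{rem:line_to_line}) and does not generalize to higher codimension.
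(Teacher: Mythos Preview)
Your proposal is correct and follows exactly the same route as the paper: the sublemma is stated in the paper as an immediate consequence of \eqref{eq:SYM_genrem1}, \eqref{e:av-der}, and \eqref{e:der-sphere}, all of which are derived in Remark~\ref{remark:BT_remdiff} by precisely the product-rule/chain-rule decomposition you outline. There is nothing to add.
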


The following technical lemma will be very useful to control how dot products behave under the map $K$. 
\begin{lemma}\label{l:dot-prod-sign}
Let $K$ be the map as in Remark \ref{r:Omega}, let  $y \in \C$ and let $L$ be a line through the origin. Denote by $\tilde \nu$ the normal unit vector to $L$ and by $\nu$ the normal unit vector to $K(L)$. If 
\begin{align}\label{e:dot1}
    \left< \hat y, \wt \nu \right> \geq \frac{1}{10}, 
\end{align}
then 
\begin{align} \label{e:dot1b}
    \left< \Omega(y), \nu \right> \geq \frac{1}{20}.
\end{align}
Moreover, if
\begin{align} \label{e:dot2}
    \left< \hat y, e_L \right> \geq \frac{1}{10}, 
\end{align}
where $\mathrm{span}(e_L)=L$, then also
\begin{align} \label{e:dot2b}
    \left< \Omega(y), \Omega(e_L) \right> \geq \frac{1}{20}.
\end{align}
\end{lemma}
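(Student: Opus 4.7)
The plan is to reduce everything to a one-dimensional estimate for the angular lift $\omega$ furnished by Lemma \ref{lemma:path-lift}. By Remark \ref{r:omega(0)} I may assume $\omega(0)=0$. The oddness of $\Omega$ then gives $\omega(t+\pi)=\omega(t)+\pi$, hence $\omega(\pi)=\pi$, and the bi-Lipschitz property reads
\[
(1+\delta_\Omega)^{-1}|s-t| \;\leq\; |\omega(s)-\omega(t)| \;\leq\; (1+\delta_\Omega)|s-t|.
\]

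First I would rotate the picture so that $e_L=1$; then $L$ is the real axis, $\Omega(e_L)=1$ generates $K(L)$ (so $K(L)$ is again the real axis), and one can take $\tilde\nu=\nu=i$ as a consistent choice of unit normal since $\omega$ is orientation-preserving. Writing $\hat y=e^{i\alpha}$ with $\alpha\in(-\pi,\pi]$, all four pairings in the statement reduce to elementary trigonometric expressions:
\[
\langle\hat y,\tilde\nu\rangle=\sin\alpha,\qquad \langle\Omega(y),\nu\rangle=\sin\omega(\alpha),
\]
\[
\langle\hat y,e_L\rangle=\cos\alpha,\qquad \langle\Omega(y),\Omega(e_L)\rangle=\cos\omega(\alpha).
\]

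For \eqref{e:dot1}$\Rightarrow$\eqref{e:dot1b}, the hypothesis forces $\alpha\in[\arcsin(1/10),\pi-\arcsin(1/10)]$. Applying the lower bi-Lipschitz bound at the two anchors $\omega(0)=0$ and $\omega(\pi)=\pi$ traps $\omega(\alpha)$ in a subinterval of $(0,\pi)$ symmetric about $\pi/2$:
\[
\omega(\alpha)\;\geq\;\frac{\arcsin(1/10)}{1+\delta_\Omega},\qquad \pi-\omega(\alpha)\;\geq\;\frac{\arcsin(1/10)}{1+\delta_\Omega}.
\]
On such an interval $\sin$ attains its minimum at the endpoints, and a direct check shows this minimum is $\geq 1/20$ provided $\delta_\Omega\leq 1/20$.

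For \eqref{e:dot2}$\Rightarrow$\eqref{e:dot2b}, the hypothesis gives $|\alpha|\leq\arccos(1/10)$; the upper bi-Lipschitz bound anchored at $0$ yields $|\omega(\alpha)|\leq(1+\delta_\Omega)\arccos(1/10)$, and so
\[
\cos\omega(\alpha)\;\geq\;\cos\!\bigl((1+\delta_\Omega)\arccos(1/10)\bigr),
\]
which is $\geq 1/20$ for $\delta_\Omega$ small enough.

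The argument is essentially a one-variable calculation once $\omega$ is set up. The only delicate point is numerical: the second implication uses only the single anchor $\omega(0)=0$, so it is tighter than the first, which benefits from the two-sided squeeze coming from $\omega(\pi)=\pi$. Since Remark \ref{r:Omega} allows $\delta_\Omega$ to be taken as small as needed, both numerical checks go through and the lemma follows.
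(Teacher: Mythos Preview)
Your argument is correct and follows the same strategy as the paper's proof: both reduce the statement to a one-variable estimate on the lift $\omega$ and then exploit the bi-Lipschitz bound together with elementary properties of $\sin$ and $\cos$. The only notable differences are cosmetic: for \eqref{e:dot1}$\Rightarrow$\eqref{e:dot1b} you use the two anchors $\omega(0)=0$ and $\omega(\pi)=\pi$ (the second coming from oddness) to trap $\omega(\alpha)$ and then invoke the unimodality of $\sin$ on $[0,\pi]$, whereas the paper first reduces by symmetry to $\alpha\in(0,\pi/2]$ and Taylor-expands; your version is a bit cleaner and avoids the case split. One small point of presentation: your normalisation really requires rotating the domain so that $e_L=1$ \emph{and} independently rotating the target so that $\Omega(e_L)=1$ (both rotations preserve all four pairings), after which the new lift satisfies $\omega(0)=0$; invoking Remark~\ref{r:omega(0)} before rotating the domain does not by itself give $\Omega(e_L)=1$. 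Finally, note that the numerical check for \eqref{e:dot2}$\Rightarrow$\eqref{e:dot2b} at the extremal angle $\alpha=\arccos(1/10)$ is genuinely tight---it requires $\delta_\Omega$ somewhat smaller than $1/20$---but this is the same constraint the paper's Taylor argument runs into, and as you observe it is covered by taking $\delta_\Omega$ sufficiently small.
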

\begin{figure}
\centering
\begin{minipage}{.5\textwidth}
  \centering
  \includegraphics[scale=0.5]{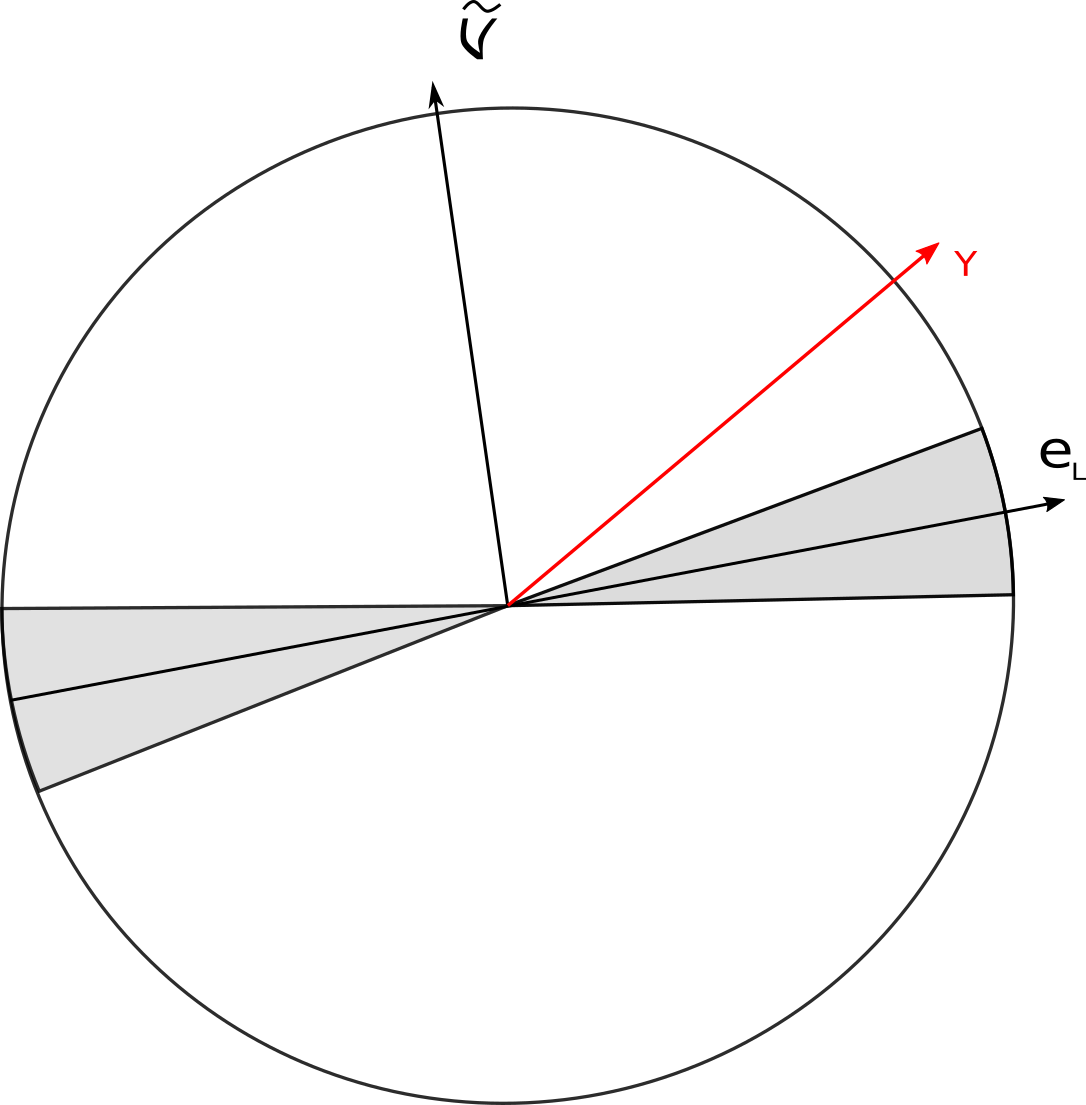}
  \captionof{figure}{When \eqref{e:dot1} holds, $y$ cannot lie in the shaded area.}
  \label{f:1}
\end{minipage}%
\begin{minipage}{.5\textwidth}
  \centering
  \includegraphics[scale=0.5]{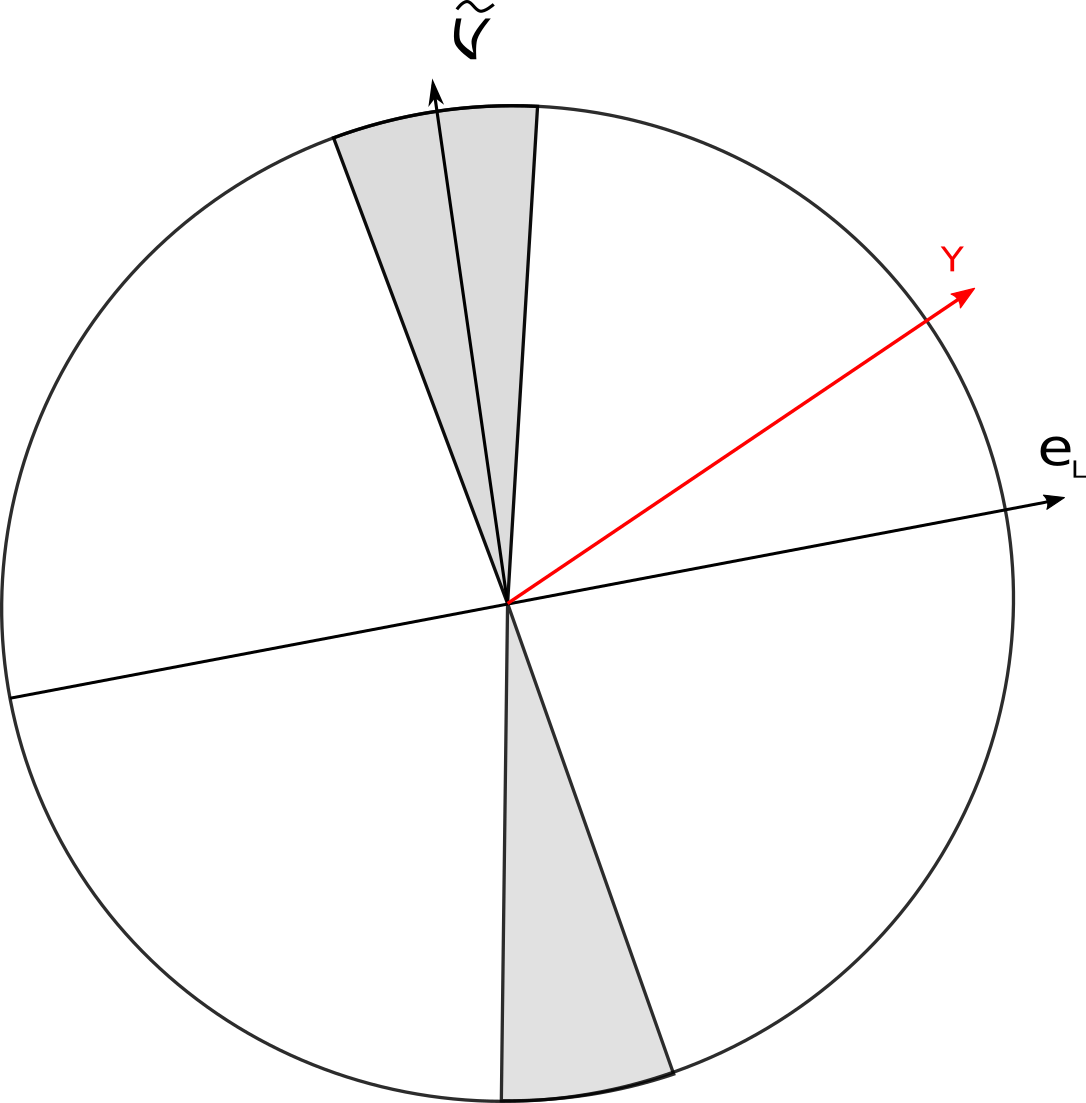}
  \captionof{figure}{This is the case when \eqref{e:dot2} holds}
  \label{f:2}
\end{minipage}
\end{figure}
\begin{proof}
We first show that \eqref{e:dot1} implies \eqref{e:dot1b}. Denote by $\theta_{\wt \nu}$ the angle that the unit vector $\wt \nu$ makes with the positive real axis and by $\theta_L$ the same for $e_L$. Let $\C^+$ to be the half plane `above' $L$ and $\C$ the upper half plane `below' $L$. We will always take the normal $\wt \nu$ which lies in $\C^+$.

   Note that \eqref{e:dot1} forces $y$ to lie in $\C^+$, and therefore $|\theta_y - \theta_{\wt \nu}| \leq \frac{\pi}{2}$. Moreover, because $\cos$ is even, we can assume that $y$ belongs to the first quadrant $Q_1$\footnote{By first quadrant we simply mean the set of points with positive $\wt \nu$ and $e_L$ coordinates.} (see Figure \ref{f:1}). 
    We write
\begin{align*}
    \left< \Omega(y), \nu \right> = \cos( \omega(\theta_y) - \theta_{\wt \nu} ) = \cos(\omega(\theta_y) - \omega(\theta_L) - \pi/2) = \sin(|\omega(\theta_y) - \omega(\theta_L)|). 
\end{align*}
Now, since $\Omega$ is bi-Lipschitz with constant $1+\delta_\Omega$, we have that $\frac{|\theta_y - \theta_L|}{1+ \delta_\Omega} \leq |\omega(\theta_y) - \omega(\theta_L)| \leq (1+\delta_\Omega) |\theta_y - \theta_L|$. 
Set 
\begin{align*}
    \alpha:=|\theta_y- \theta_L|.
\end{align*}
We have two possibilities; if 
\begin{align} \label{e:case1-a}
    \sin(|\omega(\theta_y)-\omega(\theta_L)|) \geq \sin((1+\delta_\Omega)\alpha),
\end{align}
we compute as follows\footnote{This is the case where $\Omega(y)$ lies very close to $\nu$, and increasing the angle makes $\sin$ smaller.}. By Taylor's theorem, we have
\begin{align} \label{e:case1-a-1}
    \sin((1+ \delta_\Omega) \sin(\alpha)) = \sin(\alpha) + \cos(\xi)\delta_\Omega \alpha,
\end{align}
where $\xi \in [ \alpha, (1+\delta_\Omega)\alpha]$ (in particular $\cos(\xi)$ could be negative). 
Since we assumed that $\delta_\Omega \leq 1/20$ (see Remark \ref{r:Omega}), we have that \eqref{e:case1-a-1} is bounded below by $\sin(\alpha)- \frac{\alpha}{20}$; now it suffices to notice that the inequality $\sin(\alpha) - \alpha/20 \geq \frac{1}{2} \sin(\alpha)$ is satisfied since $0< \alpha < \pi/2$. 
Moreover, $\sin(\alpha)= \left<\hat y, \wt \nu\right>$; this proves that $\left< \Omega(y), \nu \right> \geq \frac{1}{20}$ in this case (i.e. \eqref{e:case1-a}).

 The computation for
\begin{align}\label{e:case1-a-2}
    \sin(|\omega(\theta_y) - \omega(\theta_L)|) \geq \sin( (1+\delta_\Omega)^{-1} \alpha)
\end{align}
is similar. First, we expand the right hand side with Taylor's theorem to obtain the expression
\begin{align*}
    \sin(\alpha) - \cos( \xi) \frac{\delta_\Omega\alpha}{1+ \delta_\Omega}.
\end{align*}
We want $\delta_\Omega/(1+\delta_\Omega) \leq \frac{1}{2}\sin(\alpha)$; this is implied by $\alpha/20 \leq \frac{1}{2} \sin(\alpha)$, which holds for $0 < \alpha \leq \pi/2$. This gives \eqref{e:dot1b} also for \eqref{e:case1-a-2}. We have proven that \eqref{e:dot1} implies \eqref{e:dot1b}. 

Let us now show that \eqref{e:dot2} implies \eqref{e:dot2b}\footnote{We should be careful enough here to re-define $\omega$ so that it is defined on $(-\pi, \pi]$; simply put $\wt \omega(t) := \omega(t+\pi)-\pi$. Recall also Remark \ref{r:omega(0)}.}. The proof is very similar to the one just given, but let us include it for the sake of completeness. With the same notation as above, we write
\begin{align*}
    \left< \Omega(y), \Omega(e_L) \right> = \cos( |\omega(\theta_y) - \omega(\theta_L)|).
\end{align*}
Put $\alpha := |\theta_y - \theta_L|$.
This time, we can only have  $\cos(|\omega(\theta_y) - \omega(\theta_L)|) \geq \cos((1+ \delta_\Omega)\alpha)$, since $\cos$ can only decrease whenever we make the angle larger. We compute as above: by Taylor's theorem, we have 
\begin{align*}
    \cos((1+\delta_\Omega)\alpha) = \cos(\alpha) - \sin(\xi) \delta_\Omega \alpha. 
\end{align*}
One can then check that the inequality $\sin(\xi) \delta_\Omega \alpha \leq \frac{1}{2} \cos(\alpha)$ is satisfied for the relevant range of $\alpha$. We can then conclude that $\left<\Omega(y), \Omega(e_L)\right> \geq \frac{1}{2} \left< \hat y, e_L \right> \geq \frac{1}{20}$. The lemma then follows. 
\end{proof}
\begin{remark}\label{r:C+}
Lemma \ref{l:dot-prod-sign} holds similarly when $-1\leq \left< \hat y, \wt \nu \right> \leq -\frac{1}{10}$ or $-1 \leq \left< \hat y, e_L \right> \leq - \frac{1}{10}$. It suffices to carry out the computations as in the proof of the lemma with $-\wt \nu, -\nu$ and $-e_L$, respectively. 
\end{remark}

The lemma above says that we can control both the sign and the size of dot products in the image whenever the ones in the domain are sufficiently large (or small). The next lemma says that when they are small, we still have some control in terms of absolute values. 
\begin{lemma}\label{l:dot-abs}
    Keep the notation as in Lemma \ref{l:dot-prod-sign}. Then if 
    \begin{align}\label{e:dot-abs-1}
        |\left<\hat y , \wt \nu \right>| \leq \frac{1}{10} \mbox{ then } |\left< \Omega(y), \nu \right>| \leq \frac{1}{5}.
    \end{align}
      Similarly, if 
      \begin{align}\label{e:dot-abs-2}
      |\left< \hat y, e_L \right>| \leq \frac{1}{10}, \mbox{ then }  |\left< \Omega(y), \Omega(e_L)\right> | \leq \frac{1}{5}.    
      \end{align}
\end{lemma}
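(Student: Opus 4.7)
My plan is to mirror the proof of Lemma \ref{l:dot-prod-sign} almost verbatim, only this time the hypothesis says a dot product in the domain is small, and we must conclude that the corresponding dot product in the image is small (but not necessarily as small). The whole argument will again reduce to a Taylor expansion of $\sin$ and $\cos$ controlled through the bi-Lipschitz lift $\omega$ from Lemma \ref{lemma:path-lift} with $\omega(0)=0$ as normalised in Remark \ref{r:omega(0)}.

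First I would set up the angle parametrisation exactly as before. Let $\theta_y$, $\theta_L$ be the angles that $\hat y$ and $e_L$ make with the positive real axis, and pick the normal $\wt\nu$ so that $\theta_{\wt\nu}=\theta_L+\pi/2$, and similarly $\theta_\nu=\omega(\theta_L)+\pi/2$. Then
\begin{align*}
    \left<\Omega(y),\nu\right> = \sin\!\bigl(\omega(\theta_y)-\omega(\theta_L)\bigr), \qquad \left<\Omega(y),\Omega(e_L)\right> = \cos\!\bigl(\omega(\theta_y)-\omega(\theta_L)\bigr),
\end{align*}
while $\left<\hat y,\wt\nu\right>=\sin(\theta_y-\theta_L)$ and $\left<\hat y,e_L\right>=\cos(\theta_y-\theta_L)$. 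Put $\alpha:=|\theta_y-\theta_L|$. Hypothesis \eqref{e:dot-abs-1} forces $\alpha$ to lie in a small neighbourhood of $0$ or of $\pi$; hypothesis \eqref{e:dot-abs-2} forces $\alpha$ to lie in a small neighbourhood of $\pi/2$ or of $3\pi/2$.

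For \eqref{e:dot-abs-1} I would split into two cases. If $\alpha\le\arcsin(1/10)$, the bi-Lipschitz property of $\omega$ gives $|\omega(\theta_y)-\omega(\theta_L)|\le(1+\delta_\Omega)\alpha$, so by monotonicity of $\sin$ on $[0,\pi/2]$,
\begin{align*}
    |\left<\Omega(y),\nu\right>| \le \sin\!\bigl((1+\delta_\Omega)\arcsin(1/10)\bigr),
\end{align*}
and with $\delta_\Omega\le 1/20$ a Taylor expansion of $\sin$ at $\arcsin(1/10)$ shows this is below $1/5$ with room to spare. If instead $\alpha\in[\pi-\arcsin(1/10),\pi]$, the oddness of $\Omega$ gives $\omega(\theta+\pi)=\omega(\theta)+\pi$, so, writing $\beta:=\pi-\alpha$, one obtains $\sin(\omega(\theta_y)-\omega(\theta_L))=-\sin(\omega(\theta_L\pm\beta)-\omega(\theta_L))$, reducing to the previous case.

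For \eqref{e:dot-abs-2}, the oddness trick again identifies the neighbourhoods of $\pi/2$ and $3\pi/2$, so I may assume $\alpha=\pi/2-\epsilon$ with $|\epsilon|\le\arcsin(1/10)$. A one-line bookkeeping using bi-Lipschitz on either side of $\pi/2$ yields
\begin{align*}
    \bigl|\omega(\theta_y)-\omega(\theta_L)-\pi/2\bigr| \le |\epsilon| + \tfrac{\pi}{2}\delta_\Omega + \delta_\Omega |\epsilon|,
\end{align*}
and since $\cos$ vanishes at $\pi/2$, a Taylor estimate bounds $|\cos(\omega(\theta_y)-\omega(\theta_L))|$ by $\sin$ of the right-hand side above, which for $\delta_\Omega\le 1/20$ is once again comfortably below $1/5$. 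The only real obstacle is the numerical bookkeeping: one must verify that the slack provided by $\delta_\Omega\le 1/20$ suffices to push the image threshold up from $1/10$ to $1/5$. The arithmetic in the second claim is slightly tighter than in the first, because the bi-Lipschitz error $\pi\delta_\Omega/2$ at $\pi/2$ competes with the error $|\epsilon|$ near $0$, but at $\delta_\Omega=1/20$ the sum stays well under $\arcsin(1/5)$. Remark \ref{r:C+} then handles the sign ambiguities exactly as in the preceding lemma.
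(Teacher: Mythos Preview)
Your proposal is correct and follows essentially the same approach as the paper: both arguments parametrise by angles via the lift $\omega$, use the bi-Lipschitz bound $|\omega(\theta_y)-\omega(\theta_L)|\in[(1+\delta_\Omega)^{-1}\alpha,(1+\delta_\Omega)\alpha]$, and control $\sin$ or $\cos$ by a first-order Taylor expansion together with the numerical constraint $\delta_\Omega\le 1/20$. The only cosmetic difference is that for \eqref{e:dot-abs-2} you shift by $\pi/2$ and bound $|\cos|$ via $\sin$ of the deviation, whereas the paper Taylor-expands $\cos$ directly at the endpoints $(1+\delta_\Omega)^{\pm 1}\alpha$; and you are a bit more explicit about the oddness reduction $\omega(\theta+\pi)=\omega(\theta)+\pi$ for the cases $\alpha$ near $\pi$ (resp.\ $3\pi/2$), which the paper handles by the quadrant symmetry of Remark~\ref{r:C+}.
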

\begin{figure}
\centering
\begin{minipage}{.5\textwidth}
  \centering
  \includegraphics[scale=0.5]{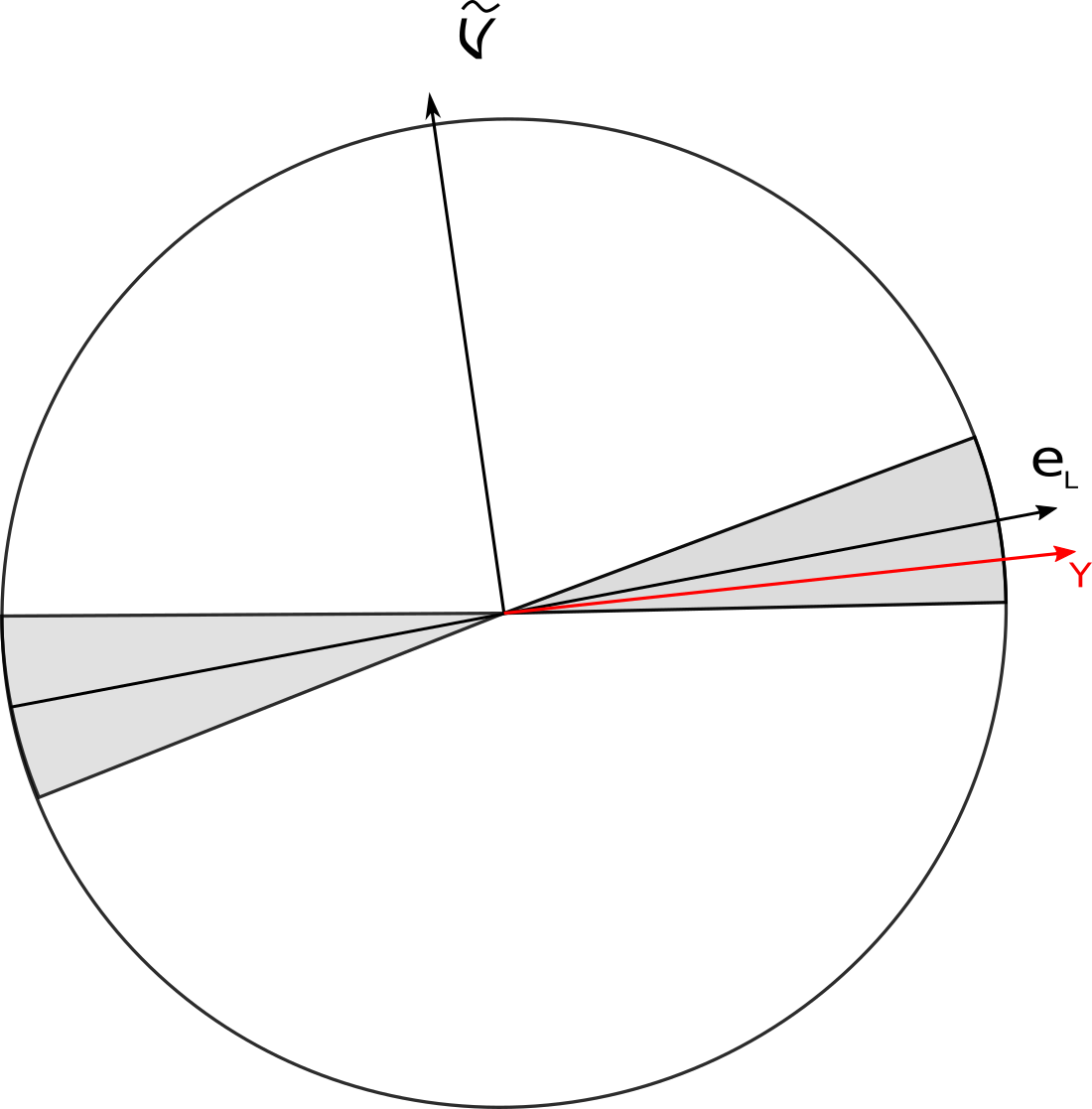}
  \captionof{figure}{When \eqref{e:dot-abs-1} holds, $y$ cannot lie in the white area.}
  \label{f:3}
\end{minipage}%
\begin{minipage}{.5\textwidth}
  \centering
  \includegraphics[scale=0.5]{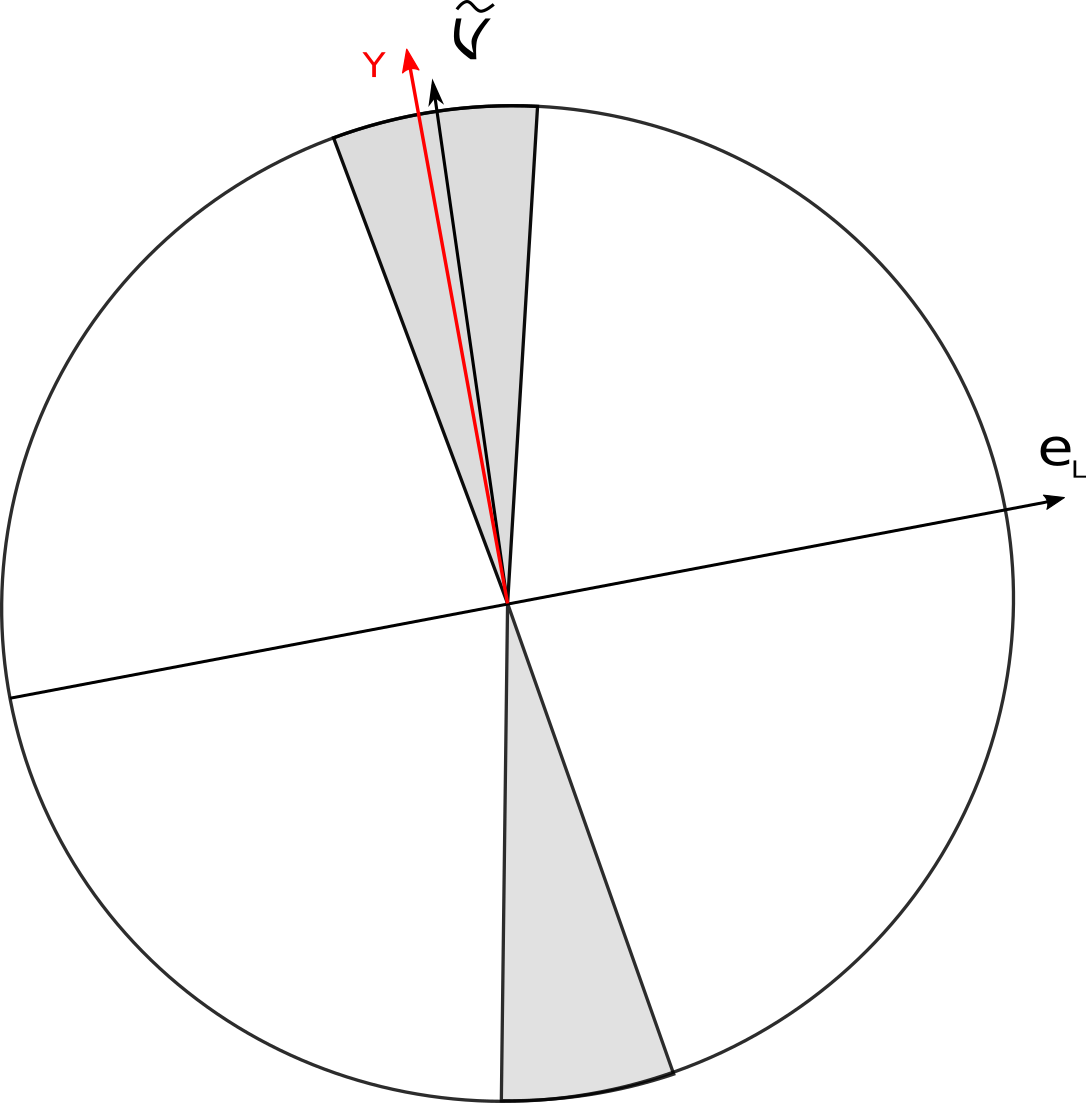}
  \captionof{figure}{This is the case when \eqref{e:dot-abs-2} holds}
  \label{f:4}
\end{minipage}
\end{figure}
\begin{proof}
The proof of this lemma is very similar to the proof of the previous one; we just give a brief sketch. We start with \eqref{e:dot-abs-1}. We can assume that $y \in\C^+$, for otherwise one can argue as in Remark \ref{r:C+}; we can also assume that $y$ is contained in the quadrant $Q_1$. That $|\left< \hat y, \wt \nu\right>| \leq 0.1$ implies that $\hat y$ is contained in an arc of length at most $0.11$ ending at $1$ (see Figure \ref{f:2}). This, the fact that $\Omega$ is bi-Lipschitz and the assumption $\delta_\Omega \leq 1/20$, imply that
\begin{align}\label{e:dot-abs-1eq}
    |\left< \Omega(y), \nu\right>| \leq |\sin((1+\delta_\Omega)\alpha) |\leq |\sin(\alpha)| + \alpha \delta_\Omega \leq 2 |\sin(\alpha)| \leq \frac{1}{5},
\end{align}
where we used Taylor's theorem and the fact that $\frac{\alpha}{20} \leq \sin(\alpha)$ for $0 \leq \alpha \leq \pi/2$. 
Proving \eqref{e:dot-abs-2} can be done similarly: we write
\begin{align*}
    |\left< \Omega(y), \Omega(e_L) \right> | = |\cos(\omega(\theta_y)- \omega(\theta_L))|.
\end{align*}
We have two possibilities. If $|\cos(\omega(\theta_y)-\omega(\theta_L))| \leq \cos((1+\delta_\Omega)|\theta_y - \theta_L|)$, then we compute (with $\alpha=|\theta_y-\theta_L|$),
\begin{align*}
    \cos((1+\delta_\Omega)\alpha) = \cos(\alpha) - \sin(\xi) \alpha \delta_\Omega,
\end{align*}
where $\xi \in [\alpha, (1+\delta_\Omega)\alpha]$. Using the assumption in \eqref{e:dot-abs-2}, that $\delta_\Omega$ and $\alpha \leq \pi/2$, the right hand side is bounded above by $1/10 + \pi/40 \leq \frac{1}{5}$. The case $|\cos(\omega(\theta_y)-\omega(\theta_L))| \leq | \cos( (1+\delta_\Omega)^{-1}\alpha)|$ can be dealt with in the same way. 
\end{proof}
\begin{remark} \label{r:dist-dot}
Note in particular that from \eqref{e:dot-abs-1eq}, we obtain
\begin{align*}
    |\left< K(y), \nu \right>| \leq 2 |\left< y, \wt \nu \right>| \leq \dist(y, L).
\end{align*}
\end{remark}

\newpage
We now use Taylor expansion to split the difference of $C_{\Omega, \phi}(x,r)$ and $C_{\Omega, \phi}(0,r)$ into two terms, one linear and the other which can be controlled in an appropriate manner. 
\begin{lemma} \label{lemma:GK_splitdiff}
Let $0, x\in Q$; let $\phi: \R \to \R_+$ be a $\mathcal{C}^2$, radial function; we have
\begin{align*}
    C_{\Omega, \phi}(x, r) - C_{\Omega, \phi}(0, r) = T(x) + E(x), 
\end{align*}
where $T(x)$ is a term linear in $x$ and $E(x)$ is controlled approriately.
\end{lemma}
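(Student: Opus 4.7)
The plan is to Taylor-expand the integrand with respect to $x$ at the origin. For fixed $y$, define
\[
F_y(x) := \frac{K(x-y)}{r}\, \phi\!\left(\left|\tfrac{x-y}{r}\right|^2\right),
\]
and write the first-order Taylor expansion $F_y(x)=F_y(0)+DF_y(0)\cdot x + R_y(x)$, with $R_y(x)=\tfrac12 D^2F_y(\xi_y)(x,x)$ for some $\xi_y$ on the segment $[0,x]$. Integrating against $\tfrac{1}{r}\,d\mu(y)$ gives exactly the decomposition in the statement, with
\[
T(x):=\frac{1}{r}\int DF_y(0)\cdot x\,d\mu(y),\qquad E(x):=\frac{1}{r}\int R_y(x)\,d\mu(y).
\]
By construction $T(x)$ is linear in $x$, which takes care of the first half of the claim.

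To identify $T(x)$ more concretely I would apply the product/chain rule, differentiating $K(x-y)$ and $\phi(|(x-y)/r|^2)$ separately. Using Sublemma~\ref{sl:DK-expression} for $DK$, and the fact that $\phi$ is radial so that $\nabla[\phi(|\cdot|^2/r^2)](-y) = \phi'(|y/r|^2)\cdot(-2y/r^2)$, I would obtain
\[
T(x)= \frac{1}{r}\int \left[ \frac{DK(-y)\cdot x}{r}\,\phi\!\left(\tfrac{|y|^2}{r^2}\right) - 2\,\frac{K(-y)}{r}\,\phi'\!\left(\tfrac{|y|^2}{r^2}\right)\frac{\langle y, x\rangle}{r^2}\right]d\mu(y).
\]
This can be then further unpacked using \eqref{e:DK-expression} when needed in later sections.

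For the error $E(x)$, the crucial point is that in our applications $\phi$ will be a smooth cutoff of an annulus like $A(0,1/2,1)$ (cf.\ Remark~\ref{remark:SYM_borelequiv}), so $F_y$ is supported where $|x-y|\sim r$, safely away from the singularity of $K$ at the origin. On this region $K\in\mathcal C^2$ (since $\Omega\in\mathcal C^2$ and $|\cdot|\in\mathcal C^2$ off the origin), and $\phi,\phi',\phi''$ are all bounded. Combined with $0,x\in Q$, so that $|x|\lesssim \ell(Q)$, and the restriction \eqref{eq:SYM:r} on $r$, the second-derivative bound produces $\|D^2 F_y\|_\infty \lesssim 1/r$ on the support of the integrand; together with the Ahlfors 1-regularity of $\mu$ this yields an estimate of the shape $|E(x)|\lesssim |x|^2/r^2$.

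The main obstacle here is really conceptual rather than computational: $K$ is only Lipschitz (not $\mathcal C^2$) at the origin, so a naive Taylor expansion would be illegal; it is the cutoff $\phi$, supported away from $0$, that legitimises the expansion and lets us push second derivatives through the integral. Everything else—the explicit shape of $T$, and the quadratic control of $E$—follows from Taylor's theorem and careful bookkeeping using the bi-Lipschitz and $\mathcal C^2$ regularity of $\Omega$ recorded in Remark~\ref{r:Omega}.
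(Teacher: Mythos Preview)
Your approach is essentially the same as the paper's---a Taylor expansion of the integrand in $x$ around $0$---but organised differently. You expand the full map $F_y(x)$ in one stroke; the paper instead Taylor-expands $K(x-y)$ to second order and then separately Taylor-expands $\phi$ around $s_0=|y/r|^2$, producing a long list of explicit pieces $A_1,A_2,A_3,B_{1,1},B_{1,2,1},B_{1,2,2},B_2,B_3,C$, and then sets $T(x):=A_2+B_{1,2,1}$ and $E(x)$ equal to the rest. Your $T$ and the paper's $T$ actually coincide: the identity $\langle K(-y),\,DK(-y)\cdot x\rangle=-\langle y,x\rangle$ (which follows from \eqref{e:DK-expression} since the tangential part is orthogonal to $\Omega(-y)$) turns the paper's $B_{1,2,1}$ into exactly your $\phi'$-term.

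The one point worth flagging is your error estimate. The crude bound $|E(x)|\lesssim |x|^2/r^2$ that you derive is precisely what the paper uses in the large-$\beta$ section (Section~5). But in the small-$\beta$ section (Section~4) the paper needs the much sharper estimate
\[
|\langle E(x_1),\nu\rangle|\ \lesssim\ \frac{|x_1|^2}{r^4}\int_{B(0,r)}\dist(y,L_Q)\,d\mu(y),
\]
and it is for \emph{this} purpose that the paper keeps all the pieces $A_3,B_{1,1},\ldots$ explicit: each is paired with $\nu$ and shown to carry a factor of $\dist(y,L_Q)$. Your single Lagrange remainder $D^2F_y(\xi_y)(x,x)$ can in principle be unpacked to the same effect (since $\xi_y\in[0,x_1]\subset L_Q$), but the analysis is not the trivial sup-norm bound you sketch; it requires exactly the term-by-term work the paper does, including the lengthy Lemma~\ref{l:III}. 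So your proof is correct for the lemma as stated, but the phrase ``controlled appropriately'' hides more than your last paragraph suggests.
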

\begin{remark}
We have exact expressions for $T$ and $E$, see \eqref{eq:GK_T} and \eqref{eq:SYM_E} below; we will refer to $T$ as `the linear term' and to $E$ as `the error term'. 
\end{remark}
\begin{proof}
We Taylor expand both the smooth cut off and the kernel around 0 as follows. First, we see that
\begin{align*}
    K(x-y) = K(-y) + DK(-y) \cdot x + \frac{1}{2}x^T D^2 K(\xi_{x,0}- y) x, 
\end{align*}
where $\xi_{x, 0}$ is contained in the line segment joining $x$ and $0$ and 
\begin{align*}
    x^T D^2 \Omega (\xi_{x, 0}-y) x = \ps{x^T D^2 \Omega_1(\xi_{x, 0}-y) x , x^T D^2 \Omega_2(\xi_{x, 0}-y) x }
\end{align*}
Now, we let 
\begin{align*}
    s_0:= \left| \frac{K(y)}{r} \right|^2 = \left| \frac{y}{r} \right|^2,
    s := \left|\frac{K(-y) + DK(-y) \cdot x + \frac{1}{2}x^T D^2 K(\xi_{x,0} -y) x}{r} \right|^2. 
\end{align*}
Again by Taylor's expansion, we have that 
\begin{align*}
\phi(s) =\phi (s_0) +\phi'(s_0) (s-s_0) +\phi''(\xi_{s, s_0}) (s_0-s)^2. 
\end{align*}

Applying this to the above difference, we get the following terms:
\begin{align*}
     A:= & \frac{1}{r} \int \frac{K(x-y)}{r}\phi\left(\left|\frac{y}{r}\right|^2\right) \, d \mu(y),
    \\
     B:= & \frac{1}{r} \int \frac{K(x-y)}{r}\phi'\left(\left|\frac{y}{r}\right|^2\right) \, \left[ -\left| \frac{DK(-y) \cdot x + \frac{1}{2} x^T D^2 K(\xi_{x,0} -y) x}{r}\right|^2 \right.  \\
     & \enskip \enskip \left. - 2\left< \frac{K(-y)}{r}, \frac{DK(-y) \cdot x + \frac{1}{2}x^T D^2 K(\xi_{x,0} -y) x}{r} \right> \right] \, d \mu(y),
    \\
    C:= & \frac{1}{r} \int \frac{K(x-y)}{r}\phi''\left(\xi \right) \,\left[ -\left| \frac{DK(-y) \cdot x + \frac{1}{2}x^T D^2 K(\xi_{x,0} -y) x}{r}\right|^2 \right.
    \\ & \enskip \enskip \left.- 2\left< \frac{K(-y)}{r}, \frac{DK(-y) \cdot x + \frac{1}{2}x^T D^2 K(\xi_{x,0} -y) x}{r} \right> \right]^2 \, d \mu(y),
    \\ D:= &  \frac{1}{r} \int \frac{K(-y)}{r}\phi\left(\left|\frac{y}{r}\right|^2\right) \, d \mu(y).
\end{align*}

\subsubsection{Splitting of $A$}
We now expand the kernel to obtain 
\begin{align*}
    A_1 & := \frac{1}{r} \int \frac{K(-y)}{r}\phi\left(\left|\frac{y}{r}\right|^2\right) \, d \mu(y), \\
    A_2 & := \frac{1}{r} \int \frac{DK(-y) \cdot x }{r}\phi\left(\left|\frac{y}{r}\right|^2\right) \, d \mu(y), \\
    A_3 & := \frac{1}{r} \int \frac{\frac{1}{2}x^T D^2(\xi_{x, y}) x}{r}\phi\left(\left|\frac{y}{r}\right|^2\right) \, d \mu(y)
\end{align*}

\subsubsection{Splitting of $B$}
To ease the notation, set 
\begin{align} 
    SO := -\left| \frac{DK(-y) \cdot x + \frac{1}{2}x^T D^2 K(\xi_{x,0} -y) x}{r}\right|^2, \label{e:SO}\\
    DP := - 2\left< \frac{K(-y)}{r}, \frac{DK(-y) \cdot \frac{1}{2}x + x^T D^2 K(\xi_{x,0} -y) x}{r} \right>.\label{e:DP}
\end{align}
Similarly, we expand $K(x-y)$ in $B$ to obtain the terms
\begin{align*}
    B_1 := &  \frac{1}{r} \int \frac{K(-y)}{r}\phi'\left(\left|\frac{y}{r}\right|^2\right) \left[ SO + DP\right] \, d \mu(y), \\
    B_2 := & \frac{1}{r} \int \frac{DK(-y) \cdot x }{r}\phi'\left(\left|\frac{y}{r}\right|^2\right) \left[ SO + DP\right] \, d \mu(y),\\
    B_3 := & \frac{1}{r} \int \frac{x^T D(\xi_{x, y}) x}{r}\phi'\left(\left|\frac{y}{r}\right|^2\right) \left[ SO + DP\right] \, d \mu(y).
\end{align*}
We further split $B_1$ as
\begin{align*}
    B_{1,1}:= & \frac{1}{r} \int \frac{K(-y)}{r}\phi'\left(\left|\frac{y}{r}\right|^2\right) \left[ SO\right] \, d\mu(y), \\
    B_{1,2} := & \frac{1}{r} \int \frac{K(-y)}{r}\phi'\left(\left|\frac{y}{r}\right|^2\right) \left[ DP\right] \, d \mu(y).
\end{align*}

Moreover, we split $B_{1,2}$ as 
\begin{align*}
    B_{1,2,1} := & \frac{1}{r} \int \frac{K(-y)}{r}\phi'\left(\left|\frac{y}{r}\right|^2\right) \left[ \left< \frac{K(-y)}{r}, \frac{DK(-y) \cdot x }{r} \right>\right] \, d \mu(y), \\
    B_{1,2,2}:= &\frac{1}{r} \int \frac{K(-y)}{r}\phi'\left(\left|\frac{y}{r}\right|^2\right) \left[ \left< \frac{K(-y)}{r}, \frac{\frac{1}{2}x^T D^2 K(\xi_{x,0} -y) x}{r} \right>\right] \, d \mu(y).
\end{align*}

Now we set (for fixed $r>0$), 
\begin{align} \label{eq:GK_T}
    T(x) := A_2(x) + B_{1,2,1}(x)
\end{align}
and
\begin{align} \label{eq:SYM_E}
    E(x):= A_3(x) + B_{1,1}(x) + B_{1,2,2}(x) + B_2(x) + B_3(x) + C(x).
\end{align}
\end{proof}

\section{Case when $\beta$'s are small} \label{s:small-beta}
In this section and in the next one we will obtains certain bounds on the $\beta$ numbers (as defined in \eqref{eq:01_beta}, with $p=2$). The method that we use come from a paper of Tolsa, \cite{tolsa2008}. We have a few more terms to deal with because our kernel is not linear. 

We first consider the case where our measure is already quite flat, that is, the $\beta$ numbers are small. In the next section we will look at the case where the $\beta$ numbers are instead large. 

Let $A$ be as in \eqref{eq:SYM:r}, let $N \sim \log(A)$, and let $Q \in \mucubes$ so that $r \sim 2^N \ell(Q) \sim A \ell(Q)$;  let $x_0$ and $x_1$ be the balanced points in $Q$ guaranteed by Lemma \ref{lemma:SYM_balancedcubes}. We will assume throughout this section that 
\begin{align} \label{eq:SYM_betasmall}
    \sum_{k=0}^N \beta_2(B(x_0, 2^k \ell(Q)) \leq \tau, 
\end{align}
where $\tau>0$ is a small constant which will be fixed later.

It will be convenient for us to define the $C_{\Omega, \phi}$ quantity (recall the definition in \eqref{e:C-smooth}) in terms of a smooth cut off of the annulus $A\ps{0, \frac{1}{2}, 2}$: let $\chi_\frac{1}{2}$ be a smooth radial function supported on $B(0,1)$ and so that $$\chara_{B\ps{0, \frac{1}{2}}} \leq \chi_{\frac{1}{2}} \leq \chara_{B(0,1)};$$ let $\chi_1$ be the same smooth radial function rescaled so that $\chara_{B(0,1)}\leq \chi_1\leq \chara_{B(0,2)}$, that is, we put $\chi_1(x):= \chi_{\frac{1}{2}}\ps{\frac{x}{2}}$.  We set
\begin{align} \label{eq:SYM_smoothcut}
    \phi:= \chi_1 - \chi_{\frac{1}{2}}.
\end{align}
Then we see that, for $r>0$, $\phi\ps{\av{ \frac{x-\cdot}{r}}^2}$ is supported on $A\ps{x, \frac{r}{2}, 2r}$. 

Recall Lemma \ref{lemma:SYM_borelequiv} and Remark \ref{remark:SYM_borelequiv}: $C_{\Omega, \phi}(x,r) = 0$ if and only if $C_{\Omega}(x,r)=0$; throughout this section we will work with the quantity $C_{\Omega, \phi}$ with $\phi$ as in \eqref{eq:SYM_smoothcut}.

\subsection{Lower bounds on the linear component $\mathbf{T}$} \

We denote by $\Pi_L$ the standard orthogonal projection onto the line $L$.
\begin{lemma} \label{lemma:SYMg_lowboundT}
Fix a cube $Q$ and let $x_0$ and $x_1$ be two balanced points of $Q$ and let $L_Q$ be the corresponding balanced line \textup{(}see \eqref{e:balanced-line} for definitions\textup{)}. Let $T$ be defined as in \eqref{eq:GK_T}, with $\phi$ as in \eqref{eq:SYM_smoothcut}. For any $z \in 3Q \setminus L_Q$, set 
\begin{align} \label{e:e_z}
    e_z= \frac{z- \Pi_{L_Q}(z)}{|z-\Pi_{L_Q}(z)|}.
\end{align}
If \eqref{eq:SYM_betasmall} holds and $\delta_\Omega$ is sufficiently small, we have 
\begin{align}
    - \left< T(e_z), \Omega(e_z) \right> \gtrsim \frac{1}{r}. 
\end{align}
\end{lemma}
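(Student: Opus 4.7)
The strategy is to isolate a main term coming from $A_2(e_z)$, which should be of size $\sim 1/r$ in the $\Omega(e_z)$-direction, and to bound $B_{1,2,1}(e_z)$ together with the ``off-line'' contribution to $A_2$ by $\tau/r$ using the flatness hypothesis \eqref{eq:SYM_betasmall}.

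I would first analyse $DK(-y)\cdot e_z$ using Sublemma \ref{sl:DK-expression}. For $y \in L_Q$, $\widehat{-y}=\pm e_L$ and the parallel component of $e_z$ relative to $\widehat{-y}$ vanishes, killing the first term of that identity; the second term becomes $\pm \, d_{\widehat{-y}}\Omega \cdot \widehat{-y}_\perp$. Since the parameterisation $\omega$ of $\Omega$ from Lemma \ref{lemma:path-lift} satisfies $\omega(t+\pi)=\omega(t)+\pi$ by oddness, the two half-lines $y>0$ and $y<0$ on $L_Q$ produce $DK(-y)\cdot e_z$ pointing in the \emph{same} direction. Lemmas \ref{l:dot-prod-sign}--\ref{l:dot-abs} (applied with $\wt\nu=e_z$) then identify that direction with $\Omega(e_z)$ up to an angular perturbation of size $\lesssim\delta_\Omega$. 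For $\delta_\Omega\le 1/20$ the inner product $\langle DK(-y)\cdot e_z,\Omega(e_z)\rangle$ thereby has a uniform lower bound in absolute value, with constant sign across the two half-lines. Integrating against $\phi(|y/r|^2)$ and using $\mu(A(0,r/2,2r))\gtrsim r$ (Ahlfors regularity) gives $|\langle A_2(e_z),\Omega(e_z)\rangle|\gtrsim 1/r$, with the sign matching the conclusion once the sign conventions built into the definition of $T$ are tracked through Lemma~\ref{lemma:GK_splitdiff}.

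To pass from the picture ``$y$ exactly on $L_Q$'' to the actual $\mu$, I would quantify the error. For $|y|\sim r$ the angular gap between $\hat y$ and $\pm e_L$ is comparable to $\dist(y,L_Q)/r$, so Cauchy--Schwarz with \eqref{eq:SYM_betasmall} yields
\[
\int_{B(x_0,2r)}\frac{\dist(y,L_Q)}{r}\,d\mu(y)\lesssim \beta_{\mu,2}(x_0,2r)\cdot r\lesssim \tau r,
\]
so the off-line contribution to $A_2(e_z)$ is $O(\tau/r)$. The same estimate handles $B_{1,2,1}(e_z)$: for $y\in L_Q$ the vector $K(-y)$ lies on $K(L_Q)$ while $DK(-y)\cdot e_z$ points (essentially) in the $\Omega(e_z)$-direction, which is orthogonal to $K(L_Q)$ by Remark~\ref{rem:line_to_line} and Lemma~\ref{l:dot-abs}. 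Hence the integrand of $B_{1,2,1}(e_z)$ vanishes in the flat case and is pointwise $O(\dist(y,L_Q)/r\cdot 1/r^2)$ in general, giving $|\langle B_{1,2,1}(e_z),\Omega(e_z)\rangle|\lesssim \tau/r$.

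Choosing first $\delta_\Omega$ small (to make the main term definite in both size and sign) and then $\tau$ small relative to the resulting absolute constant, the two error contributions are absorbed into a fraction of the main term, yielding $-\langle T(e_z),\Omega(e_z)\rangle\gtrsim 1/r$. I expect the main technical obstacle to be the first step: one must verify that the contributions from $y\in L_Q^+$ and $y\in L_Q^-$ to $A_2(e_z)$ \emph{add} rather than cancel after pairing with $\Omega(e_z)$; this is precisely where the combination of oddness of $\Omega$, the identity $\omega(t+\pi)=\omega(t)+\pi$, and the bi-Lipschitz bound $|\omega'-1|\lesssim\delta_\Omega$ enters decisively through Lemmas~\ref{l:dot-prod-sign}--\ref{l:dot-abs}.
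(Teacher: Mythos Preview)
Your proposal is correct in outline and would close. It differs from the paper's proof in one substantive way.

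\textbf{The treatment of $A_2$.} You argue by perturbation from the exactly flat situation $y\in L_Q$: in the flat case $DK(-y)\cdot e_z$ is, up to a sign, $d_{e_L}\Omega\cdot (e_L)_\perp$, whose pairing with $\Omega(e_z)$ (or equivalently with $\nu$, since the two differ by an angle $O(\delta_\Omega)$) is bounded away from zero with a fixed sign; then you control the off-line error by $\int \dist(y,L_Q)/r\, d\mu \lesssim \tau r$. The paper instead proves $-\langle A_2(e_z),\nu\rangle\gtrsim 1/r$ \emph{pointwise}, without invoking \eqref{eq:SYM_betasmall}: it splits $\C$ into regions $G$, $H$, $F$ according to whether $|\langle\hat y,e_z\rangle|$ or $|\langle \hat y_\perp,e_z\rangle|$ is large, and shows via Lemmas~\ref{l:dot-prod-sign}--\ref{l:dot-abs} that on each region the full integrand $\langle\hat y,e_z\rangle\langle\Omega(y),\nu\rangle+\langle e_z,\hat y_\perp\rangle\langle d_{\hat y}\Omega\cdot\hat y_\perp,\nu\rangle$ is nonnegative and bounded below by an absolute constant. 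Your route is more uniform (the same perturbation idea handles both $A_2$ and $B_{1,2,1}$); the paper's pointwise positivity is more robust and is in fact reused verbatim in the large-$\beta$ section where flatness is unavailable.

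\textbf{The treatment of $B_{1,2,1}$.} Both you and the paper use flatness here, but the paper is slightly sharper: rather than estimating $\langle K(-y),DK(-y)\cdot e_z\rangle$ by $O(\dist(y,L_Q))$ via perturbation, it observes the exact identity $\langle K(y),DK(y)\cdot e_z\rangle=\langle y,e_z\rangle$ (the tangential piece $d_{\hat y}\Omega\cdot\hat y_\perp$ is orthogonal to $K(y)$, so only the radial piece survives). Pairing the remaining factor $K(-y)$ with $\nu$ then gives a pointwise bound $\lesssim\dist(y,L_Q)^2/r^4$, hence $|\langle B_{1,2,1}(e_z),\nu\rangle|\lesssim\tau^2/r$ rather than your $\tau/r$. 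Either suffices.

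One small note: your invocation of Lemmas~\ref{l:dot-prod-sign}--\ref{l:dot-abs} ``to identify the direction of $d_{\widehat{-y}}\Omega\cdot\widehat{-y}_\perp$ with $\Omega(e_z)$'' is a bit loose, since those lemmas compare $\langle \hat y,\cdot\rangle$ with $\langle\Omega(\hat y),\cdot\rangle$ rather than tangent vectors. The claim you need---that $d_{e_L}\Omega\cdot(e_L)_\perp$ makes an angle $O(\delta_\Omega)$ with $\Omega(e_z)$---follows directly from $|\omega'(t)-1|\lesssim\delta_\Omega$ and the mean-value theorem, which you should cite instead.
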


We will prove this lemma through some sublemmata. But first, note that without loss of generality, we can work with 
\begin{align} \label{e:x_0}
    x_0 = 0;
\end{align}
note also that $e_z$ is perpendicular to $L_Q$.

Let $z \in 3Q$ and set $\bar z := \Pi_{L_Q}(z)$.
Let us look at the quantity
\begin{align*}
    \langle T(e_z), \nu \rangle & = \langle A_2(e_z), \nu \rangle + \langle B_{1,2,1}(e_z), \nu \rangle. 
\end{align*}
(Recall the definition of $T$ in \eqref{eq:GK_T}). We will first take care of the term containing $A_2$ and later the one containing $B_{1,2,1}$.

\subsubsection{Lower bounds on $A_2$}
Recall that 
\begin{align*}
    A_2(x)= \frac{1}{r} \int \frac{DK(-y) \cdot x}{r} \phi \ps{\av{\frac{y}{r}}^2} \, d\mu(y).
\end{align*}
Our next short term goal is to find a lower bound for the first term of $T$ (as in \eqref{eq:GK_T}). 
\begin{sublemma} \label{sublemma:SYMg_lowA2}
Recall that $x_0=0$ and  $x_1$ are two balanced points in $Q$ and $L_Q$ is the corresponding balanced line \textup{(}see \eqref{e:balanced-line}\textup{)}. Let $\nu$ denote  the vector $\Omega(x_1)$ rotated by 90 degrees counter-clockwise \textup{(}we are just choosing the orientation of the normal to $K(L_Q)$, so that it agrees with \eqref{e:y-perp}\textup{)}. Then
\begin{align}
    - \left< A_2(e_z), \nu\right> \gtrsim \frac{1}{r}.
\end{align}
    \textup{(}Recall that $e_z$ has been defined in \eqref{e:e_z}\textup{)}.
\end{sublemma}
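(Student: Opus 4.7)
The strategy is to analyze the integrand
\[
\langle DK(-y) \cdot e_z,\,\nu\rangle\,\phi(|y/r|^2)
\]
pointwise, show it has a definite sign and magnitude of order $1$ on the portion of $\spt(\mu) \cap A(0, r/2, 2r)$ lying close to $L_Q$, and then obtain the $1/r$ lower bound via AD-regularity. The main tool is the decomposition from Sublemma \ref{sl:DK-expression}:
\begin{align*}
    DK(-y) \cdot e_z = \langle -\hat y,\, e_z^{\parallel(-y)}\rangle\,\Omega(-y) + \langle e_z,\, (-\hat y)_\perp\rangle\, d_{-\hat y}\Omega \cdot (-\hat y)_\perp,
\end{align*}
whose two summands I will refer to as the \emph{radial} and the \emph{tangential} pieces respectively.

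The hypothesis \eqref{eq:SYM_betasmall} forces most of the mass of $\mu$ in the annulus $A(0, r/2, 2r)$ to lie within a thin neighborhood of the balanced line $L_Q$, on which $\hat y$ is close to $\pm e_L$, where $e_L := x_1/|x_1|$ spans $L_Q$. For such $y$, $|\langle -\hat y, e_z^{\parallel(-y)}\rangle| = |\langle e_z, \hat y\rangle|$ is small (since $e_z \perp L_Q$), and by Lemma \ref{l:dot-abs} we have $|\langle \Omega(-y), \nu\rangle| \le 1/5$. Hence the radial piece contributes at most $O(1/50)$ in magnitude to $\langle DK(-y) \cdot e_z, \nu\rangle$. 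In the tangential piece, $|\langle e_z, (-\hat y)_\perp\rangle|$ is close to $1$, and the vector $d_{-\hat y}\Omega \cdot (-\hat y)_\perp$ is tangent to $\bS$ at $\Omega(-\hat y) \approx \mp \Omega(e_L)$; the tangent line there is precisely $\R\nu$, and the length of this tangent vector is bounded below by $(1+\delta_\Omega)^{-1}$ via Lemma \ref{lemma:path-lift}. Combining Lemma \ref{l:dot-prod-sign} with Remark \ref{r:C+} pins down a uniform sign for $\langle d_{-\hat y}\Omega \cdot (-\hat y)_\perp, \nu\rangle$ across the four sign combinations of $\pm e_L$ and $\pm e_z$, matching the sign conventions for $\nu$. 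Consequently, on the flat part of the support, $\langle DK(-y) \cdot e_z,\,\nu\rangle$ has a fixed sign and absolute value bounded below by an absolute constant $c>0$.

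Step 1: Fix $\eta > 0$ small, to be chosen in terms of $\delta_\Omega$, and split $\spt(\mu) \cap A(0, r/2, 2r)$ into the flat set $F := \{y : \dist(y, L_Q) \le \eta r\}$ and its complement. On $F$, the pointwise bound above combined with AD-regularity give a contribution to $-\langle A_2(e_z), \nu\rangle$ of size at least $c\,\mu(F)/r^2 \gtrsim 1/r$; the lower bound $\mu(F) \gtrsim r$ uses the small-$\beta$ hypothesis and AD-regularity (together they force a definite proportion of the annular mass of $\mu$ to sit close to $L_Q$, e.g.\ since the balanced line $L_Q$ is close to the $\beta_2$-minimizing line at scale $r$). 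Step 2: The complementary contribution is at most $C \mu(A(0,r/2,2r) \setminus F)/r^2$ in absolute value (since $|DK(-y)| \lesssim 1$), and a Chebyshev argument yields
\[
\mu(A(0,r/2,2r) \setminus F) \lesssim \eta^{-2} r^{-1} \int_{B(x_0,\,2r)} \dist(y, L_Q)^2\, d\mu(y) \lesssim \eta^{-2} \tau^2 r,
\]
where the last step uses \eqref{eq:SYM_betasmall} summed dyadically over $k = 0, \ldots, N$ and again the closeness of $L_Q$ to the $\beta_2$-minimizing lines. Step 3: Choose $\eta$ small enough in terms of $\delta_\Omega$ that the good-set lower bound dominates the radial error, and then $\tau$ small enough in \eqref{eq:SYM_betasmall} so that the error from Step 2 is subdominant; this yields $-\langle A_2(e_z), \nu\rangle \gtrsim 1/r$.

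The main obstacle is the sign bookkeeping in Step 1: $e_z$ may point to either side of $L_Q$, and $\hat y$ may lie near $\pm e_L$, giving four sign cases which must all produce the same sign of $\langle DK(-y)\cdot e_z,\nu\rangle$. The oddness of $\Omega$ identifies pairs of these cases (via $\Omega(-y) = -\Omega(y)$ and its consequences for $d_{-\hat y}\Omega$), and the remaining symmetry is provided by Remark \ref{r:C+}, which extends Lemma \ref{l:dot-prod-sign} under the flips $\tilde\nu \leftrightarrow -\tilde\nu$ and $e_L \leftrightarrow -e_L$. The bi-Lipschitz smallness $\delta_\Omega \le 1/20$ from Remark \ref{r:Omega} is exactly what makes Lemmas \ref{l:dot-prod-sign} and \ref{l:dot-abs} convert the heuristic ``$\Omega$ almost preserves perpendicularity'' into quantitative control with explicit numerical constants.
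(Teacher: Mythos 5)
Your proposal is correct in spirit, but it takes a genuinely different route from the paper, so let me compare the two.

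The paper does \emph{not} use the small-$\beta$ hypothesis \eqref{eq:SYM_betasmall} in its proof of this sublemma, and it does not localize to a tube around $L_Q$. Instead, after expanding $DK(-y)\cdot e_z$ via \eqref{e:DK-expression}, it partitions the \emph{entire} plane by the angular sectors
$G := \{|\ip{\hat y, e_z}|\ge 1/10\text{ and }|\ip{\hat y_\perp, e_z}|\ge 1/10\}$, $H := \{|\ip{\hat y, e_z}|\le 1/10\}$, $F := \{|\ip{\hat y_\perp, e_z}|\le 1/10\}$, and shows via Lemmas \ref{l:dot-prod-sign} and \ref{l:dot-abs} (together with the orientation fact \eqref{e:dot-prods-diff-nodiff}) that the integrand $\ip{\hat y,e_z}\ip{\Omega(y),\nu}+\ip{\hat y_\perp,e_z}\ip{d_{\hat y}\Omega\cdot\hat y_\perp,\nu}$ is sign-definite and bounded below by an absolute constant on \emph{every} one of $G$, $H$, $F$. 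The conclusion then follows from lower Ahlfors regularity alone. Your argument instead restricts attention to the tubular neighborhood $\{\dist(y,L_Q)\le\eta r\}$ — which under the annulus constraint coincides with the paper's set $H$ once $\eta$ is small — and treats the complementary region via Chebyshev, invoking \eqref{eq:SYM_betasmall} twice: once to show that the tube retains a definite proportion of the annular mass, and once to make the Chebyshev tail $\lesssim\eta^{-2}\tau^2 r$ subdominant. Both routes succeed. The paper's has the advantage of being hypothesis-lighter (no appeal to $\tau$ being small, so no coupling of $\tau$ to $\eta$ and $\delta_\Omega$ at this stage) and of showing the integrand is favorable \emph{everywhere}, not just near $L_Q$; it also avoids the bookkeeping needed to compare $L_Q$ with the $\beta_2$-minimizing lines at scale $\sim r$. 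Yours is arguably conceptually cleaner in that it only argues carefully in the generic near-$L_Q$ regime, at the price of an extra Chebyshev step and an extra constraint tying $\tau$ to $\delta_\Omega$.

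One point you should flesh out: your sign argument invokes Lemma \ref{l:dot-prod-sign}/Remark \ref{r:C+} to ``pin down a uniform sign for $\ip{d_{-\hat y}\Omega\cdot(-\hat y)_\perp,\nu}$,'' but those lemmas give signs for $\ip{\Omega(y),\nu}$ and $\ip{\Omega(y),\Omega(e_L)}$, not directly for the image of the tangent vector $\hat y_\perp$. The missing ingredient is that $\Omega$ is orientation-preserving (an odd bi-Lipschitz circle map with constant near $1$ has $\omega$ increasing), so $d_{\hat y}\Omega\cdot\hat y_\perp$ is a \emph{positive} multiple of $\Omega(y)_\perp$; this is exactly \eqref{e:dot-prods-diff-nodiff}, which then converts the sign of $\ip{\Omega(y),\Omega(e_L)}=\ip{\Omega(y)_\perp,\nu}$ into the sign you need. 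Without this orientation observation, the tangent-vector argument only pins down the line $\R\nu$ and not the sign, and your case split over $\pm e_L$, $\pm e_z$ would not close.
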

\begin{proof}

Using \eqref{e:DK-expression}, we write
\begin{align}
    & -\left< A_2(e_z), \nu \right> \nonumber\\
    & = \int \left<\hat y, e_z\right>\left< \Omega(y), \nu \right> \frac{\phi \ps{\av{\frac{y}{r}}^2}}{r^{2}}\, d\mu(y) + \int \left< e_z, \hat y_\perp\right> \left<d_{\hat y}\Omega \cdot \hat y_\perp , \nu \right>  \frac{\phi \ps{\av{\frac{y}{r}}^2}}{r^{2}}\, d\mu(y) \nonumber\\
    & =: I + II.\label{eq:SYMg_A2low}
\end{align}
For the sake of clarity, we consider a few cases separately. Denote $\C^+$ the half space `above' $L$, and $\C^-$ the half space below. Denote $Q_1$ the first quadrant in the plane with basis $e_{x_1}, e_z$, and $Q_2$ the second quadrant.
\begin{itemize}
    \item \underline{Suppose that $y, z \in \C^+$}. Under these assumptions,
we look first at the integrand of $I$: suppose that
\begin{align} \label{eq:SYMg_1}
    |y| \geq \left< y, e_z\right> \geq \frac{|y|}{10}. 
\end{align}
From Lemma \ref{l:dot-prod-sign} (applied with $e_z =\wt \nu$), in particular \eqref{e:dot1}-\eqref{e:dot1b},  we obtain
\begin{align}\label{e:A2-bound1}
    \left< |y|\Omega(y), \nu \right> \geq \frac{|y|}{20}.
\end{align}
 
We consider now the integrand of $II$, that is, the term $\left< e_z,  \hat y_\perp\right> \left< d_{\hat y}\Omega \cdot \hat y_\perp, \nu\right>$. In analogy to \eqref{eq:SYMg_1}, suppose that
\begin{align} \label{e:dot-y-perp}
|y| \geq |\left< |y| \hat y_\perp, e_z \right>| \geq \frac{|y|}{20}.
\end{align}
\begin{itemize}
    \item 
If $y \in Q_1$, then 
\begin{align*}
    \eqref{e:dot-y-perp} \implies |y| \geq \left< |y| \hat y_\perp, e_z \right> \geq |y|/20.
\end{align*}
Note that the mapping sending $\hat y \mapsto \hat y_\perp$ is an isometry, hence 
\begin{align*}
    |y|\left< \hat y_\perp, e_z \right> = \left< y, e_{x_1}\right>.
\end{align*}
Now, let us denote by $\Omega(y)_\perp$ the vector $\Omega(y)$ rotated by 90 degrees counterclockwise (so that $\Omega(y)_\perp \perp \Omega(y)$ and $|\Omega(y)_\perp|= |\Omega(y)|=1$); we see that both $d_{\hat y} \Omega \cdot \hat y_\perp$ and $\Omega(y)_\perp$ lie in $T_{\Omega(\hat y)} \bS$ and moreover, since $\hat y_\perp \in T_{\hat y} \bS$,
\begin{align*}
   \frac{1}{1+\delta_\Omega} \leq |d_{\hat y} \Omega \cdot \hat y_\perp| \leq 1+ \delta_\Omega,
\end{align*} since $\Omega$ is bi-Lipschitz. In particular, for a point $u \in \C$, if we let $\theta$ be the angle between $u$ and $d_{\hat y}\Omega \cdot \hat y_\perp$ and $\theta'$ be the angle between $u$ and $\Omega(y)_\perp$, first, 
\begin{align*}
    \theta =\theta',
\end{align*}
and second, 
\begin{align}
    \left< d_{\hat y}\Omega \cdot \hat y_\perp, u \right>&  = |d_{\hat y}\Omega \cdot \hat y_\perp | |u|\cos(\theta) 
     = |d_{\hat y}\Omega \cdot \hat y_\perp | |u|  \cos(\theta') \nonumber \\
    & \sim_{\delta_\Omega} |\Omega(y)_\perp||u| \cos(\theta') 
    = \left< \Omega(y)_\perp, u \right>. \label{e:dot-prods-diff-nodiff} 
\end{align}
Thus we write
\begin{align} \label{e:A2-1}
    \left< y, e_{x_1}\right> \left< d_{\hat y} \Omega \cdot \hat y_\perp, \nu \right> \sim_{\delta_\Omega} \left< y, e_{x_1}\right> \left< \Omega(y)_\perp, \nu \right> = \left< y, e_{x_1}\right> \left< \Omega(y), \Omega(x_1) \right>,
\end{align}
as $\nu \perp \Omega(x_1)$ by definition. At this point, we apply Lemma \ref{l:dot-prod-sign} (here $e_{x_1}=e_L)$, in particular \eqref{e:dot2}-\eqref{e:dot2b}, so to obtain 
\begin{align} \label{e:A2-bound2}
   1 \geq  \left<\Omega(y), \Omega(x_1) \right> \geq \frac{1}{20}.
\end{align}
\item If $y \in Q_2$, then \begin{align*}
     \eqref{e:dot-y-perp} \implies - |y| \leq \left< |y|\hat y_\perp, e_z \right> \leq -|y|/10.
\end{align*}
With the same arguments as above (see also Remark \ref{r:C+}, we obtain that 
\begin{align} \label{e:dot11}
    -1 \leq \left< \Omega(y), \Omega(x_1)\right> \leq - \frac{1}{20}. 
\end{align}
\end{itemize}
Thus, whenever $y,z \in \C^+$, and we have that $y,z$ are so that \eqref{eq:SYMg_1} and \eqref{e:dot-y-perp} are satisfied, then the integrand in both $I$ and $II$ are positive and in fact bounded away from 0. 
\item 
\underline{Suppose that $y \in \C^+$ and $z \in \C^-$.} In this case,  it suffices to go through the arguments above by taking dot products with $-e_z$ and $-\nu$. 
Note that if $|y| \geq \left< y, -e_z \right> \geq|y|/10$ (resp. $|y| \geq \left< \hat y_\perp, -e_z \right> \geq |y|/10$), then 
\begin{align} \label{eq:SYMg_symmetry}
& \left< \Omega(y), \Omega(-e_{x_1}) \right>= - \left< \Omega(y), \Omega(e_{x_1}) \right> \geq \frac{1}{20} \\
& \mbox{resp.  } - \left< \Omega(\hat y_\perp), \nu \right> \geq \frac{1}{20}.
\end{align}
\item The remaining cases can be dealt with by symmetry.
\end{itemize}
Set 
\begin{align}
    & G := \{y \in \C \, |\, |\left< \hat y, e_z \right> | \geq \frac{1}{10} \mbox{ and } \, |\left< \hat y_\perp, e_z\right>| \geq \frac{1}{10} \};\nonumber \\
    & H:= \{y \in \C \,|\, |\left<\hat y, e_z \right>| \leq \frac{1}{10} \};\nonumber \\
    & F := \{ y \in \C \, |\, |\left< \hat y_\perp, e_z \right>| \leq \frac{1}{10}\}.\label{e:GHF}
\end{align}
Clearly these sets are disjoint, and their union is $\C$.
Thus, we may write
\begin{align*}
    \eqref{eq:SYMg_A2low} = \int_G + \int_H + \int_{F},
\end{align*}
where we are still integrating with respect to $\phi(|y|^2/r^2)\, \frac{d\mu(y)}{r^2}$. 
The first integral can be bounded below using the previous discussion (namely \eqref{e:A2-bound1}, \eqref{e:A2-bound2}, \eqref{e:dot11}, \eqref{eq:SYMg_symmetry}), and using symmetry: we obtain
\begin{align*}
\int_G \gtrsim \frac{1}{200 r^2} \mu(G \cap(A(0, r/2, 2r))).    
\end{align*}
As for the other integrals, we see that  
\begin{align*}
    \mbox{if } |\left<\hat y_\perp, e_z\right>| \leq 0.1 \mbox{ then } |\left< \Omega(\hat y)_\perp, \nu\right>| \leq 0.2;
\end{align*}
this follows using Lemma \ref{l:dot-abs}, in particular \eqref{e:dot-abs-1}; similarly, if $|\left< \hat y, e_z\right>|\leq 0.1 $, then  $|\left<\Omega(y), \nu\right>|\leq 0.2$. 
But note that if $|\left< \hat y, e_z \right>| \leq 0.1$, then 
\begin{align*}
    |\left< \hat y_\perp, e_z \right>| = |\left< \hat y, e_{x_1}\right>| \geq (1-0.01)^{\frac{1}{2}}. 
\end{align*}
Similarly, if $|\left< \Omega(\hat y), \nu \right>| \leq 0.2$, then 
\begin{align*}
    |\left< \Omega(y)_\perp, \nu \right>| \gtrsim (1-0.02)^{\frac{1}{2}}.
\end{align*}
This implies that, for $y \in F$ we have that
\begin{align} \label{e:A2-2}
    & \left<\hat y,e_z\right>\left< \Omega(y), \nu \right>  +  \left< \hat y_\perp, e_z \right> \left< d_{\hat y}\Omega \cdot \hat y_\perp, \nu\right> 
     \gtrsim (1-0.01) - 0.02 \geq \frac{1}{2},
\end{align}
where we also used \eqref{e:dot-prods-diff-nodiff}.
Thus, 
\begin{align*}
    \int_{F} \gtrsim \frac{1}{r^2} \mu(H \cap(A(0, r/2, 2r))), 
\end{align*}
and similarly for $\int_H$. Now using the lower regularity of $\mu$, we obtain the lemma. 
\end{proof}
\subsubsection{Control on $B_{1,2,1}$}
Recall that $B_{1,2,1}(x)$ is given by 
\begin{align*}
    \frac{1}{r} \int \frac{K(-y)}{r}\phi'\left(\left|\frac{y}{r}\right|^2\right) \left[ \left< \frac{K(-y)}{r}, \frac{DK(-y) \cdot x }{r} \right>\right] \, d \mu(y)
\end{align*}
\begin{sublemma}\label{lemma:SYM_bound_on_B121}
Keep the notation as above. In particular, recall that $x_0=0$ and $x_1$ are balanced points of $Q$ (and so $0,x_1 \in L_Q$ - which is the balanced line), that $z \in 3Q$, $e_z$, as given in \eqref{e:e_z} is perpendicular to $L_Q$ and that $\nu$ is the normal vector to $K(L_Q)$. Then 
\begin{align} \label{eq:SYM_bound_on_B121}
    - \int_{B(0,r)} \dist(y, L_Q)^2 r^{-4}\, d\mu(y) \leq \left<B_{1,2,1}(e_z), \nu \right> \leq \int_{B(0,r)} \dist(y, L_Q)^2 r^{-4} \, d\mu(y).
\end{align}
\end{sublemma}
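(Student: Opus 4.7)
The core of the argument is a clean algebraic identity. Since $K(y) = |y|\Omega(y/|y|)$ and $|\Omega(\hat y)| = 1$, we have $|K(y)|^2 = |y|^2$ for all $y \neq 0$. Differentiating this identity in the direction of a vector $v$ yields
$$ \langle K(y), DK(y) \cdot v\rangle = \langle y, v\rangle. $$
Because $K$ is odd, $K(-y) = -K(y)$ and consequently $DK(-y) = DK(y)$, so substituting $-y$ for $y$ we obtain $\langle K(-y), DK(-y) \cdot e_z\rangle = -\langle y, e_z\rangle$. Since $x_0 = 0 \in L_Q$ and $e_z$ is a unit vector perpendicular to $L_Q$ by \eqref{e:e_z}, the inner bracket in the definition of $B_{1,2,1}(e_z)$ satisfies
$$ \left| \left\langle \frac{K(-y)}{r}, \frac{DK(-y) \cdot e_z}{r} \right\rangle\right| = \frac{\dist(y, L_Q)}{r^2}. $$

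Next I would control the outer factor $\langle K(-y), \nu\rangle$ and show $|\langle K(-y), \nu\rangle| \lesssim \dist(y, L_Q)$. Because $K$ maps the line $L_Q$ through the origin to the line $K(L_Q)$ through the origin (Remark \ref{rem:line_to_line}) with unit normal $\nu$, this quantity vanishes when $y \in L_Q$. In the near regime $|\langle \hat y, \wt \nu\rangle| \leq 1/10$ the estimate is precisely the content of Remark \ref{r:dist-dot}; in the complementary regime the trivial bound $|\langle K(-y), \nu\rangle| \leq |y|$ suffices, since there $|y| \asymp \dist(y, L_Q)$.

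Combining the two estimates, the integrand of $\langle B_{1,2,1}(e_z), \nu\rangle$ is pointwise dominated in absolute value by a constant times $\|\phi'\|_\infty \dist(y, L_Q)^2 r^{-4}$. Since $\phi'(|y/r|^2)$ is supported in the annulus where $|y|/r \in [1/2, 2]$, integration gives both the upper and lower bounds stated (any absolute constants being absorbed into the domain of integration, which one may enlarge from $A(0,r/2,2r)$ to $B(0,r)$ without loss).

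The only non-trivial step is the identity $\langle K(y), DK(y) \cdot v\rangle = \langle y, v\rangle$: this is the whole reason that two factors of $\dist(y, L_Q)$ appear, rather than just one. Without it, one would a priori only expect a linear-in-$\dist$ bound on $B_{1,2,1}$, which would be far too weak for the applications. Once the identity is in place, the remainder is routine bookkeeping combining the bi-Lipschitz structure of $\Omega$ already exploited in Lemmas \ref{l:dot-prod-sign}--\ref{l:dot-abs} with the support properties of $\phi$.
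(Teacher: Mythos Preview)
Your proof is correct and follows essentially the same route as the paper's. The paper obtains the same key identity $\langle K(y), DK(y)\cdot e_z\rangle = \langle y, e_z\rangle$ by splitting $DK$ via Sublemma~\ref{sl:DK-expression} into a radial piece $I$ and a tangential piece $II$, then observing $II=0$ (since $d_{\hat y}\Omega\cdot\hat y_\perp \in T_{\Omega(\hat y)}\bS$ is orthogonal to $K(y)$) and computing $I$ directly; your derivation via differentiating $|K(y)|^2=|y|^2$ is a neater way to reach the same point. You are also slightly more careful than the paper in handling the outer factor $|\langle K(-y),\nu\rangle|\lesssim\dist(y,L_Q)$: the paper simply invokes Remark~\ref{r:dist-dot}, while you explicitly dispose of the regime $|\langle\hat y,\wt\nu\rangle|>1/10$ by the trivial bound, which is the right completion. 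One small slip: the support of $\phi'(|y/r|^2)$ lies in $A(0,r/2,2r)$, which is not contained in $B(0,r)$; the bound as written in the paper (with domain $B(0,r)$ and no constants) is already somewhat informal, so this is harmless, but your phrase ``enlarge from $A(0,r/2,2r)$ to $B(0,r)$'' should read $B(0,2r)$.
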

\begin{proof}
As before, we split the integral into the radial derivative and the spherical one and we apply Lemma \ref{sl:DK-expression}:
\begin{align}
    B_{1,2,1}(e_z) & = \frac{1}{r} \int \frac{K(y)}{r}\phi'\left(\left|\frac{y}{r}\right|^2\right) \left[ \left< \frac{K(y)}{r}, \frac{\left< \hat y, e_z \right>\Omega(y)}{r} \right>\right] \, d \mu(y) \\
    & + \frac{1}{r} \int \frac{K(y)}{r}\phi'\left(\left|\frac{y}{r}\right|^2\right) \left[ \left< \frac{K(y)}{r}, \frac{\left<e_z, \hat y_\perp\right>  d_{\hat y} \Omega\cdot \hat y_\perp}{r} \right>\right] \, d \mu(y)\\
    & =: I(x) + II(x).
\end{align}
Note that since $d_{\hat y} \Omega \cdot \hat y_\perp \in T_{\Omega(y)} \bS$ and $\Omega(y) \in \ps{T_{\Omega(y)} \bS}^\perp$, we have 
\begin{align} \label{eq:SYMg_IIzero}
    \left< II(e_z), \nu \right> = 0.
\end{align}
Let us now compute $\langle I(e_z), \nu \rangle$. We have
\begin{align}\label{eq:SYMg_Idist}
\langle I(e_z), \nu \rangle & = \frac{1}{r} \int \frac{\langle K(y), \nu \rangle}{r} \phi' \left(\left|\frac{y}{r}\right|^2\right) 
\left[ \left< \frac{K(y)}{r}, \frac{\left< \hat y, e_z \right> \Omega(y)}{r} \right>\right] \, d \mu(y).
\end{align}
The second inner product in the integral can be re-written as 
\begin{align*}
    \left< \hat y, e_z\right> \left< r^{-1}K(y), r^{-1} \Omega(y)\right> =  \frac{\left< \hat y, e_z \right>|y|}{r^2} = \frac{\left< y, e_z\right> }{r^2},
\end{align*}
and so  we obtain 
\begin{align*}
    \left< I(e_z), \nu \right> = \frac{1}{r^{4}} \int \left<  y, e_z\right> \left< K(y), \nu \right> \, \phi'\ps{ \av{\frac{y}{r}}^2} \, d\mu(y).
\end{align*}
It immediately follows from Remark \ref{r:dist-dot} that
\begin{align*}
    - \int_{B(0,r)} \dist(y, L_Q)^2 r^{-4}\, d\mu(y) \leq \eqref{eq:SYMg_Idist} \leq \int_{B(0,r)} \dist(y, L_Q)^2 r^{-4} \, d\mu(y).
\end{align*}
This proves the Sublemma. 
\end{proof}

\begin{corollary} \label{corollary:SYM_lowerbound}
\begin{align*}
    & \langle A_2(e_z) + B_{1,2,1}(e_z) , \nu \rangle \\
    & \gtrsim   \frac{1}{r}-\frac{2}{r^2} \int_{B(0,r)} \left( \frac{\dist(y, L_{Q})}{r}\right)^2  \, d \mu(y).
\end{align*}
\end{corollary}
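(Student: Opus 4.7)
The plan is to obtain the corollary as an immediate consequence of the two preceding sublemmas, with essentially no new estimates to perform. The key observation is that by the linearity of the inner product in its first argument and the fact that $T = A_2 + B_{1,2,1}$ by definition (see \eqref{eq:GK_T}), we can write
\begin{align*}
    \langle A_2(e_z) + B_{1,2,1}(e_z), \nu \rangle
    = \langle A_2(e_z), \nu \rangle + \langle B_{1,2,1}(e_z), \nu \rangle,
\end{align*}
so it suffices to bound each of the two terms on the right hand side separately and sum (keeping track of signs).

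First, I would invoke Sublemma \ref{sublemma:SYMg_lowA2} to obtain the main, scale-correct contribution $-\langle A_2(e_z), \nu \rangle \gtrsim 1/r$, which encodes the lower Ahlfors-regularity of $\mu$ together with the quantitative information on $K$ produced by Lemmas \ref{l:dot-prod-sign} and \ref{l:dot-abs}. Second, I would invoke Sublemma \ref{lemma:SYM_bound_on_B121}, which provides a two-sided control
\begin{align*}
    |\langle B_{1,2,1}(e_z), \nu \rangle|
    \;\leq\; \int_{B(0,r)} \frac{\dist(y, L_Q)^2}{r^4}\, d\mu(y)
    \;=\; \frac{1}{r^2} \int_{B(0,r)} \left( \frac{\dist(y, L_Q)}{r} \right)^2 d\mu(y),
\end{align*}
where the last equality is a trivial rewriting that matches the form stated in the corollary.

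Combining these two estimates (with signs adjusted so as to align with the orientation of $\nu$ chosen in Sublemma \ref{sublemma:SYMg_lowA2}) yields a lower bound of the form
\begin{align*}
    \frac{c}{r} - \frac{1}{r^2} \int_{B(0,r)} \left( \frac{\dist(y, L_Q)}{r} \right)^2 d\mu(y)
\end{align*}
for the quantity in question; absorbing the implicit constants and using $1 \leq 2$ to match the numerical factor in the statement gives the claimed bound. There is no real obstacle here: all the analytic work (dot product control on the sphere, use of the balanced points, symmetry between the quadrants in $\mathbb{C}^{+}$ and $\mathbb{C}^{-}$) has been done in Sublemmas \ref{sublemma:SYMg_lowA2} and \ref{lemma:SYM_bound_on_B121}, and the corollary is just the bookkeeping step that packages them together into a single inequality that can later be iterated over scales $k = 0, \dots, N$ under the smallness assumption \eqref{eq:SYM_betasmall}.
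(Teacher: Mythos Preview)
Your proposal is correct and takes essentially the same approach as the paper, whose proof is the single line ``This follows at once from Sublemma \ref{sublemma:SYMg_lowA2} and Sublemma \ref{lemma:SYM_bound_on_B121}.'' You have in fact written out more detail than the paper does, including the sign bookkeeping (the statement of the corollary in the paper is tacitly about $-\langle T(e_z),\nu\rangle$, as is clear from its use in the proof of Lemma \ref{lemma:SYMg_lowboundT}).
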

\begin{proof}
This follows at once from Sublemma \ref{sublemma:SYMg_lowA2} and Sublemma \ref{lemma:SYM_bound_on_B121}.
\end{proof}

We divert for a moment from the main argument, to delve a little more in the choice of the balanced cubes $x_0$ and $x_1$. The following definition will be used later on. 
For a point $y \in \spt(\mu)$, set
\begin{align}
    \beta(y, Q) := \ps{\int_{A \ell(Q)}^{2A \ell(Q)} \beta_{2,\mu}(y,r)^2 \, \dr}^{\frac{1}{2}}.
\end{align}

\begin{lemma} \label{lemma:GK_goodpoints}
There exists a constant $c^*>0$ such that the following holds. Let $Q \in \mucubes$ and let $P(Q)$ be the line minimising $\beta_{\mu,2}(Q)$;  we can choose $x_0, x_1$ as in Lemma \ref{lemma:SYM_balancedcubes} so that first,
\begin{align}
    \dist(x_j, P(Q)) \leq c^* \beta_{\mu, 2}(Q) \ell(Q), \, \, j=0,1,
\end{align}
and second,
\begin{align}
    \beta(x_j, Q)^2 \leq c^* \fint_Q \beta(y, Q)^2 \, d\mu(y), \,\, j=0,1.
\end{align}
\end{lemma}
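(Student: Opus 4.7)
The plan is to show that the conditions fail only on a small-measure subset of $Q$, and then to invoke Ahlfors regularity to find two balanced points in the remaining good set.

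More precisely, I will apply Chebyshev's inequality to both conditions simultaneously. For the first condition, let
\begin{align*}
B_1 := \{y \in Q : \dist(y, P(Q)) > c^* \beta_{\mu,2}(Q) \ell(Q)\}.
\end{align*}
Since $Q \subset B_Q$, the definition of $\beta_{\mu,2}(Q) = \beta_{\mu,2}(z_Q, 3\diam(Q))$ in \eqref{eq:01_beta} together with \eqref{e:betaQ} yields
\begin{align*}
\int_Q \dist(y, P(Q))^2 \, d\mu(y) \leq (3\diam Q)^3 \beta_{\mu,2}(Q)^2 \lesssim \ell(Q)^3 \beta_{\mu,2}(Q)^2,
\end{align*}
so Chebyshev gives $\mu(B_1) \lesssim \ell(Q)/(c^*)^2 \sim \mu(Q)/(c^*)^2$, where the last comparison uses Ahlfors regularity. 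For the second condition, set
\begin{align*}
B_2 := \Big\{y \in Q : \beta(y,Q)^2 > c^* \fint_Q \beta(z,Q)^2 \, d\mu(z)\Big\};
\end{align*}
then directly from Chebyshev one has $\mu(B_2) \leq \mu(Q)/c^*$.

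Consequently, setting $G := Q \setminus (B_1 \cup B_2)$, we get
\begin{align*}
\mu(G) \geq \mu(Q)\Big(1 - \frac{C}{c^*} - \frac{C}{(c^*)^2}\Big),
\end{align*}
so by choosing $c^*$ sufficiently large (depending only on the Ahlfors regularity constant $C_0$), we arrange $\mu(G) \geq \tfrac{1}{2} \mu(Q)$. Any two points $x_0, x_1 \in G$ will then automatically satisfy the two conclusions by construction.

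It remains to show that $G$ contains two points $x_0, x_1$ with $|x_0 - x_1| \geq \eta' \ell(Q)$ for some $\eta' > 0$; this recovers the balanced-points condition of Lemma \ref{lemma:SYM_balancedcubes} (possibly with a smaller constant). Pick any $x_0 \in G$ and assume for contradiction that every $x_1 \in G$ satisfies $|x_0 - x_1| < \eta' \ell(Q)$. Then $G \subset B(x_0, \eta' \ell(Q))$, and Ahlfors regularity gives $\mu(G) \leq C_0 \eta' \ell(Q)$. Comparing with the lower bound $\mu(G) \geq C_0^{-1} \ell(Q)/2$, we obtain a contradiction once $\eta' < (2 C_0^2)^{-1}$. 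This produces the desired balanced pair inside $G$.

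I expect no serious obstacle here: the argument is a clean two-step application of Chebyshev plus a pigeonhole based on Ahlfors regularity. The only care needed is in the order of choices of constants — first fix $\eta'$ small enough for the pigeonhole step, then fix $c^*$ large enough that both Chebyshev bounds combine to leave $\mu(G)$ bigger than the threshold dictated by that $\eta'$.
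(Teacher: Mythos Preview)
Your proof is correct and uses the same ingredients as the paper's proof --- Chebyshev's inequality for each of the two conditions, combined with Ahlfors regularity --- but the organization differs slightly. The paper first invokes Lemma~\ref{lemma:SYM_balancedcubes} to obtain provisional balanced points $y_0,y_1$, places small balls $B_j=B(y_j,\eta\ell(Q))\cap Q$ around them, and runs the Chebyshev argument \emph{inside each $B_j$} to find good $x_j\in B_j$; the separation $|x_0-x_1|\gtrsim\ell(Q)$ is then inherited from $|y_0-y_1|$. You instead run Chebyshev once over the whole cube $Q$ to produce a large good set $G$, and then re-prove the balanced-points lemma directly for $G$ via the pigeonhole at the end. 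Your route is arguably a bit cleaner: it avoids the bookkeeping of comparing $\mu(B_j)$ to $\mu(B_Q)$ that the paper needs, and it makes transparent that the separation constant $\eta'$ may differ from the $\eta$ of Lemma~\ref{lemma:SYM_balancedcubes} (which is also implicitly the case in the paper's argument). Either way, the resulting constants depend only on the Ahlfors regularity constant $C_0$, which is all that is required downstream.
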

\begin{proof}
Let $y_0,y_1 \in Q$ be the balanced points guaranteed by Lemma \ref{lemma:SYM_balancedcubes} and, for a constant $0< \eta<1$ to be fixed later, let $B_j$ denote $B(y_j, \eta \ell(Q)) \cap Q$. For two constants $c, c'>0$, we set
\begin{align*}
    & F_j:= \left\{ y \in B_j |\dist(y, P(Q)) \leq c\, \beta_{\mu, 2}(Q)\right\},\\
    & G_j := \left\{ y \in B_j | \, \beta(y, Q)^2 \leq c'\,  \fint_Q \beta(y, Q)^2 \, d\mu(y) \right\}.
\end{align*}
By Chebyshev inequality and Cauchy - Schwarz, we have, for $j=0,1$, 
\begin{align} \label{eq:SYM_cheby_Fj}
    \mu(B_j \setminus F_j) & = \mu\ps{\left\{y \in B_j\, |\, \ell(Q)^{-1} \dist(y,P(Q)) > c \, \beta_{\mu,2}(Q) \right\}} \nonumber \\
    & \leq \frac{1}{c \, \beta_{\mu,2}(Q)} \, \int_{\{\ell(Q)^{-1} \dist(y, P(Q)) > c\, \beta_{\mu,2}(Q)\}} \frac{\dist(y, P(Q))}{\ell(Q)} \, d\mu(y) \nonumber\\
    & \leq  \frac{1}{c \, \beta_{\mu,2}(Q)} \mu(B_Q)^{\frac{1}{2}} \ps{\int_{B_Q} \ps{\frac{\dist(y, P(Q))}{\ell(Q)}}^2 \, d\mu(y) }^{\frac{1}{2}} \nonumber \\
    & \lesssim \frac{\mu(B_Q)}{c}.
\end{align}
Note that by Ahlfors regularity, we have that $\mu(B_Q)\lesssim \frac{1}{\eta} \mu(B_j)$.
Similarly, we have
\begin{align} \label{eq:SYM_cheby_Gj}
    \mu(B_j \setminus G_j) & \leq \frac{1}{c' \fint_Q \beta(y, Q)^2 d\mu} \int_Q \beta(y, Q)^2 \, d\mu = \frac{\mu(Q)}{c'} \lesssim \frac{\mu(B_j)}{c' \eta}.
\end{align}
We want to show that, for $c, c'$ large enough, we have that $F_j \cap G_j \neq \emptyset$; we argue by contradiction and we assume that for all $c, c'$ the intersection is empty. By \eqref{eq:SYM_cheby_Fj} and \eqref{eq:SYM_cheby_Gj}, we have
\begin{align}
    \frac{\eta r}{c_0} \leq \mu(B_j)  \leq \mu(B_j \setminus F_j) + \mu(B_j \setminus G_j) \leq \mu(B_j)\ps{\frac{1}{c \eta} + \frac{1}{c' \eta} } \leq c_0 r \ps{\frac{1}{c}+\frac{1}{c'}}.
\end{align}
This is clearly a contradiction. Thus for $c,c'$ large enough (depending only on $\eta$ and $c_0$), we see that $F_j \cap G_j \neq \emptyset$. Taking $c^*= c^*(\eta, c_0) := \max\{c,c'\}$ we end the proof of the lemma. 
\end{proof}

\begin{proof}[Proof of Lemma \ref{lemma:SYMg_lowboundT}]
Recall the notation for balanced lines as in \eqref{e:balanced-line} and the lines below it. We see that
\begin{align} \label{eq:GK_distl0}
\dist(y, L_Q) \leq \dist(y, L_{B(0, r)}) + \dist_H\left( L_{B(0, r)} \cap B(0, r), L_Q \cap B(0, r)\right), 
\end{align}
and, using Lemma \ref{lemma:GK_goodpoints}, that
\begin{align} \label{eq:GK_distl0b}
\dist_H\left( L_{B(0, r)} \cap B(0, r), L_Q \cap B(0, r)\right) \lesssim r \, \sum_{k=0}^N \beta_{\mu, 2} (0, 2^k \ell(Q)),
\end{align}
with $N \sim \log_2(A)$.
Recalling that $\mu$ is Ahlfors 1-regular, we have that
\begin{align*}
& r^{-1} \, \int_{B(0, r)} \left( \frac{\dist(y, L_Q)}{r}\right)^2 \, d\mu(y) \\
& \lesssim r^{-1} \int_{B(0, t)} \left( \frac{\dist(y, L_{B(0, r)})}{r}\right)^2 \, d\mu(y)\\
& \enskip \enskip + r^{-1} \, \int_{B(0, r)} \left( \sum_{k=0}^N \beta_{\mu, 2}(0, 2^k \ell(Q))\right)^2 \, d\mu(y) \\
& \lesssim \beta_{\mu, 2}(0, 2^N \ell(Q))^2 + \frac{\mu(B(0, r))}{r} \left(\sum_{k=0}^N \beta_{\mu, 2}(0, 2^k \ell(Q))\right)^2 \\
& \lesssim \left(\sum_{k=0}^N \beta_{\mu, 2} (0, 2^k \ell(Q))\right)^2.
\end{align*}
Hence 
\begin{align*}
    \frac{1}{ r^{2}} \int_{B(0, r)} \frac{\dist(y,L_Q)^2}{r^2} \, d\mu(y)  \lesssim \frac{1}{ r} \, \left(\sum_{k=0}^N \beta_{\mu,2}(0, 2^k\ell(Q))\right)^2 \lesssim \frac{\tau^2}{r}.
\end{align*}
Using Corollary \ref{corollary:SYM_lowerbound}, we finally obtain
\begin{align} \label{eq:SYM_Tlowerbound}
    - \langle T(e_z) , \nu\rangle \geq C \left( \frac{c_0}{r} - \frac{\tau^2}{r} \right) \geq \frac{C(\tau)}{2r}.
\end{align}
An appropriate choice of $\tau$ gives the lemma.
\end{proof}

\subsection{Upper bound on the (nonlinear) term $\mathbf{E}$}
Keep the notation as above. In this subsection we will prove an upper bound for the error term $E$ in terms of distance to the balanced plane $L_Q$.

\begin{lemma} \label{l:error-x}
Let $x_0, x_1$ be the balanced points given by Lemma \ref{lemma:SYM_balancedcubes} and chosen as in Lemma \ref{lemma:GK_goodpoints}. We have that
\begin{align}
    |\left< E(x_1), \nu\right>|  \lesssim \frac{|x_1|^2}{r^{4}}\int_{B(x_0, r)} \dist(y,L_Q) \, d\mu(y).
\end{align}
\end{lemma}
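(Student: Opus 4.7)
The plan is to estimate the six summands $A_3$, $B_{1,1}$, $B_{1,2,2}$, $B_2$, $B_3$, $C$ of $E(x_1)$ separately, aiming in each case at the desired bound. The common recipe is: after pairing the integrand with $\nu$, extract a factor $\lesssim\dist(y,L_Q)/r$ and a factor $\lesssim(|x_1|/r)^2$. The first is obtained from the vector-valued factor of the integrand via Remark \ref{r:dist-dot} or its consequences, combined with $x_1\in L_Q$; the second is obtained from the bounds $|DK(z)|\lesssim 1$ and $|D^2K(z)|\lesssim 1/|z|$, together with $|y|\sim r$ on $\spt(\phi)$.

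The three straightforward summands are $B_{1,1}$, $B_{1,2,2}$, and $C$, whose integrands have $K(-y)$ (in the first two) or $K(x_1-y)$ (in $C$) as the outer vector factor. Pairing with $\nu$ and applying Remark \ref{r:dist-dot} immediately yields $|\langle K(-y),\nu\rangle|\leq 2\dist(y,L_Q)$, and, using $x_1\in L_Q$, also $|\langle K(x_1-y),\nu\rangle|\leq 2\dist(x_1-y,L_Q)=2\dist(y,L_Q)$. The remaining scalar factors $SO$, $\langle K(-y)/r, x_1^T D^2K\, x_1/r\rangle$, and $[SO+DP]^2$ are each of size $O(|x_1|^2/r^2)$ on $\spt(\phi)$; integrating against $r^{-2}\phi(|y/r|^2)\,d\mu$ produces the claimed bound for these three.

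For $B_2$, whose outer factor is $DK(-y)\cdot x_1$, I would apply Sublemma \ref{sl:DK-expression} to write it as $\langle\widehat{-y},x_1^\parallel\rangle\Omega(-y)+\langle x_1,\widehat{-y}_\perp\rangle\,d_{\widehat{-y}}\Omega\cdot\widehat{-y}_\perp$. Pairing with $\nu$, the first summand produces $|\langle\Omega(y),\nu\rangle|\lesssim\dist(y,L_Q)/|y|$ (Remark \ref{r:dist-dot} divided by $|y|$). In the second summand $\langle d_{\hat y}\Omega\cdot\hat y_\perp,\nu\rangle$ is only $O(1)$, but its scalar coefficient is small: placing coordinates with $L_Q$ on the horizontal axis, $\hat y=(\cos\theta,\sin\theta)$ and $\hat y_\perp=(-\sin\theta,\cos\theta)$ while $x_1$ is horizontal, whence $|\langle x_1,\hat y_\perp\rangle|=|x_1||\sin\theta|=|x_1|\dist(y,L_Q)/|y|$. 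Hence $|\langle DK(-y)\cdot x_1,\nu\rangle|\lesssim|x_1|\dist(y,L_Q)/r$, and the $O(|x_1|/r)$ bound on $[SO+DP]$ supplies the remaining factor.

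The main obstacle is handling $A_3$ and $B_3$, whose outer factor is $x_1^T D^2K(\xi_{x_1,0}-y)x_1$. The crucial observation is that $K|_{L_Q}$ is the linear map $t e_{x_1}\mapsto t\,\Omega(e_{x_1})$ (by oddness of $\Omega$), so the $L_Q$-directional second derivative of $K$ vanishes identically on $L_Q$. Concretely, Lemma \ref{lemma:path-lift} yields $K(r_z,\theta)=r_z e^{i\omega(\theta)}$ in polar coordinates adapted to $L_Q=\{\theta=0\}$; a direct computation then gives
\begin{align*}
\partial_x^2 K(r_z,\theta)=\frac{\sin^2\theta}{r_z}\,e^{i\omega(\theta)}\bigl[\,1-(\omega'(\theta))^2+i\,\omega''(\theta)\bigr],
\end{align*}
so $|x_1^T D^2K(z)x_1|=|x_1|^2|\partial_x^2 K(z)|\lesssim|x_1|^2\dist(z,L_Q)^2/|z|^3$ via $|\sin\theta|=\dist(z,L_Q)/|z|$ and the $C^2$ bounds on $\omega$. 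At $z=\xi_{x_1,0}-y$, since $0,x_1\in L_Q$ forces $\xi_{x_1,0}\in L_Q$, one has $\dist(z,L_Q)=\dist(y,L_Q)$, while $|z|\sim r$ because $|\xi_{x_1,0}|\leq|x_1|\leq\ell(Q)\ll r$ and $|y|\sim r$ on $\spt(\phi)$. Integrating produces $|\langle A_3(x_1),\nu\rangle|\lesssim\frac{|x_1|^2}{r^5}\int\dist(y,L_Q)^2\,d\mu(y)$, and $\dist(y,L_Q)\leq 2r$ in $B(0,r)$ then absorbs one power of $\dist$ into $r$ to yield the stated estimate. The term $B_3$ is handled identically, with the extra factor $[SO+DP]=O(|x_1|/r)\leq 1$ causing no loss.
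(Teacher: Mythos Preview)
Your proof is correct and, for the terms $B_{1,1}$, $B_{1,2,2}$, and $B_2$, essentially coincides with the paper's argument. The substantive difference lies in your treatment of $A_3$ (and hence $B_3$). The paper expands $D^2K$ by the product rule into three pieces $I$, $II$, $III$ (corresponding to $(D^2|\cdot|)\Omega$, the cross terms, and $|\cdot|D^2\Omega$), handles $I$ and $II$ directly, and defers $III$ to a long Appendix (Lemma~\ref{l:III}) where the matrices $D^2\Re\Omega$ and $D^2\Im\Omega$ are written out entrywise and regrouped term by term. Your approach is much more economical: by passing to polar coordinates adapted to $L_Q$ and computing the second $L_Q$-directional derivative of $K(\rho,\theta)=\rho e^{i\omega(\theta)}$ in one stroke, you obtain the closed-form expression $\partial_x^2 K = \rho^{-1}\sin^2\theta\, e^{i\omega}[1-(\omega')^2+i\omega'']$, from which the factor $\dist(y,L_Q)^2/|y|^3$ drops out immediately via $|\sin\theta|=\dist(z,L_Q)/|z|$ and the $C^2$ bounds on $\omega$. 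This is genuinely simpler; the paper's route trades this single computation for several pages of matrix bookkeeping. Your argument even yields a slightly stronger pointwise bound (quadratic rather than linear in $\dist(y,L_Q)$), which you then relax.

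For $C$, you also take a shortcut: rather than Taylor-expanding $K(x_1-y)$ into $K(-y)+DK(-y)\cdot x_1+\cdots$ as the paper does (splitting $C$ into $C_1,C_2,C_3$), you pair $K(x_1-y)$ with $\nu$ directly and use $x_1\in L_Q$ to get $\dist(x_1-y,L_Q)=\dist(y,L_Q)$. This works; the paper's decomposition here is not really needed. One small gloss: the cutoff in $C$ is $\phi''(\xi_{s_0,s})$ rather than $\phi(|y/r|^2)$, so the effective support is $A(0,r/4,4r)$ rather than $A(0,r/2,2r)$---the paper notes this and integrates over $B(0,4r)$, but it only affects constants.
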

We will prove this lemma in the following several paragraphs; each paragraph corresponds to a piece of $E$. 
Let us recall that
\begin{align*}
    E(x)= A_3(x) + B_{1,1}(x) + B_{1,2,2}(x) + B_2(x) + B_3(x) + C(x), 
\end{align*}
where the various terms were defined in Section \ref{s:split}. 
\begin{remark}
Without loss of generality, we may let $x_0=0$; also, to ease the notation we let $$x_1= x.$$
\end{remark}

\subsubsection{Estimates on $A_3$} \label{subsection:A3}
Recall that 
\begin{align*}
    A_3(x)= \frac{r^{-2}}{2} \int x^T D^2(K(\xi_{x, 0}-y)) x \, \phi(|y|^2/r^2) \, d\mu(y).
\end{align*}
To estimate this term, we want to understand the integrand
\begin{align*}
    x^T D^2 K(\xi_{x,0}-y) x = \left(x^T D^2 K_1(\xi_{x,0}-y) x,  x^T D^2K_2(\xi_{x,0}-y) x\right)^T.
\end{align*}
For each $i=1,2$, and writing $\xi:=  \xi_{x,0}-y$, we have
 \begin{align*}
       & x^TD^2 K_i(\xi)x 
        \\
        &= x^T(D^2|\xi|) \Omega_i (\xi)x +  x^T \left(\left(D|\xi|\right)^T\, D \Omega_i (\xi)+ D(|\xi|) \left(D \Omega_i(\xi)\right)^T \right) x  + x^T |\xi| D^2\Omega_i(\xi)x\\
        & =: I_i(\xi, x) + II_i(\xi, x) + III_i(\xi,x).
    \end{align*}
We write 
\begin{align} \label{e:A3-3}
    I(\xi, x)= 
    \begin{pmatrix}
    I_1(\xi,x) \\ I_2(\xi, x)
    \end{pmatrix} 
    \in \C;
\end{align} 
we write $II$ and $III$ in the same manner.   
\subsubsection{Bounds for $I$}    
    Let us look at $\av{\left< I(\xi,x), \nu\right>}$. A computation tells us that
    \begin{align*}
       \left< I(\xi,x), \nu \right> = \frac{1}{|\xi|} \left< K(\xi), \nu \right> x^T D^2(|\xi|) x.
    \end{align*}
    Now, on one hand, using Remark \ref{r:dist-dot} we see that
    \begin{align}\label{e:A3-aI}
        \frac{\left| \left< K(\xi), \nu \right> \right|}{|\xi|} \lesssim \frac{\av{ \left< \xi, e_z \right> }}{|\xi|} \leq  \frac{\dist(\xi, L_{Q})}{|\xi|}.
    \end{align}
    On the other hand, because $|D^2(|\xi|)| \lesssim |\xi|^{-1}$,
    $
        |x^T D^2(|\xi|) x| \lesssim |x|^2 \frac{1}{|\xi|}.
    $
    Hence, 
    $
        |\langle I(\xi,x), \nu \rangle| \lesssim \frac{\dist(\xi, L_Q) |x|^2}{|\xi|^2},
    $
    and therefore, 
    \begin{align} \label{eq:SYMg_estI}
         \frac{1}{r} \int \left| \frac{ \left< I(\xi, x), \nu \right>}{r} \right| \phi\ps{\av{\frac{y}{r}}^2} \, d\mu(y)
        \lesssim \frac{|x|^2}{r^{4}} \int_{B(0,r)} \dist(y, L_{Q}) \, d\mu(y).
    \end{align}
Note that here we have used that 
\begin{align}\label{e:xi-y-norm}
    |\xi|=|\xi_{0, x}-y| \geq \frac{1}{2} |y| \sim r,
\end{align}
by the definition of the smooth cut off $\phi$ (see \eqref{eq:SYM_smoothcut}.
\subsubsection{Bounds for $II$}\label{sss:A3-II} We now look at $\langle II(\xi, x) , \nu\rangle$. 
    To do so, we consider first the term $\left(D|\xi|\right)^T D \Omega_i (\xi)$. The other one can be dealt with in the same fashion. 
    First, notice that, for $i =1,2$, 
    \begin{align*}
        x^T (D|\xi|)^T D\Omega_i(\xi) x = \langle x, D|\xi| \rangle \langle D \Omega_i(\xi) , x \rangle.
    \end{align*}
    Thus we have that
    \begin{align} \label{eq:SYM_ErrorB_1}
        \left< \left(x^T (D|\xi|)^T D \Omega_1(\xi) x, \, x^T (D|\xi|)^T D \Omega_2(\xi) x\right), \nu \right> = \left< x, D|\xi| \right> 
        \left< 
        \begin{pmatrix}
        \langle D\Omega_1(\xi), x \rangle \\
        \langle D\Omega_2(\xi), x \rangle 
        \end{pmatrix},
        \nu \right>.
    \end{align}
    Let us look at the second inner product above: we may write $$\left(\langle D\Omega_1(\xi), x \rangle ,\langle D\Omega_2(\xi), x \rangle \right)^T$$ as $
        D \Omega(\xi) \cdot x$.
    Then recall Remark \ref{remark:BT_remdiff} and in particular the expression for $D \Omega(\xi)$ in \eqref{e:der-sphere}: we see that \begin{align*}
        D\Omega(\xi) \cdot x = \frac{1}{|\xi|} \left< x, \hat \xi_\perp\right> d_{\hat \xi}\Omega \cdot\hat \xi_\perp;
    \end{align*}
    note that 
    \begin{align*}
        \av{d_{\hat \xi} \Omega \cdot \hat \xi_\perp} \leq  \av{d_{\hat \xi} \Omega} \lesssim 1,
    \end{align*}
    since $|\hat \xi|=1$ by definition, and $\Omega$ has Lipschitz constant close to one. 
    Then, denoting by $x_\perp$ the vector $x$ rotated by $90$ degrees clockwise (so that it is parallel to $e_z$, as defined in \eqref{e:e_z}), we obtain
    \begin{align} \label{eq:SYM_bound_on_der}
       & |\left< D\Omega(\xi) \cdot x, \nu \right> | = \frac{1}{|\xi|}\av{ \left< x, \hat \xi_\perp \right>\left<d_{\hat \xi}\Omega \cdot \hat \xi_\perp , \nu \right> } \lesssim \frac{1}{|\xi|}\av{\ip{x, \hat \xi_\perp}}\nonumber \\
        & = \frac{1}{|\xi|^2} \av{ \left< x^\perp, \xi\right> }= \frac{|x|}{|\xi|^2}\left<e_z, \xi \right> = \frac{|x|}{|\xi|^2}\dist(\xi, L_Q).
    \end{align}
    On the other hand, as far as the first inner product in the right hand side of \eqref{eq:SYM_ErrorB_1} is concerned, we immediately see that it's bounded above by $|x|$. Thus, using again \eqref{e:xi-y-norm}, 
    \begin{align}
        \left| \eqref{eq:SYM_ErrorB_1} \right| \leq \frac{|x|^2}{|y|^2} \dist(\xi, L_Q). 
    \end{align}
    Since $\xi = \xi_{0,x}- y$, and $\xi_{0,x} \in [0, x] \subset L_Q$, we also have that $\dist(\xi, L_Q) = \dist(y, L_Q)$. We therefore conclude (together with the remark that one can deal with the second term in $II(\xi,x)$ in exactly the same way) that 
    \begin{align*}
        \frac{1}{r} \int \av{ \frac{\left<  II(\xi, x), \nu\right> }{r} }\, \phi\ps{\av{\frac{y}{r}}^2} \, d \mu(y)& \lesssim \frac{1}{r} \int \frac{ |\left<x, D(|\xi|)\right>| \, |\left< D \Omega(\xi) x, \nu \right> | }{r} \phi\ps{\av{\frac{y}{r}}^2} \, d \mu(y) \\
        & \lesssim \frac{|x|^2}{r^4} \int_{B(0,r)} \dist(y, L_Q) \, d \mu(y).
    \end{align*}
 
\subsubsection{Bounds for $III$} \label{sss:A3-III}
Let us now look at 
\begin{align} \label{eq:GK_A3_3}
    & \av{\frac{1}{r} \int \frac{\ip{III(\xi, x), \nu}}{r} \, \phi \ps{\av{\frac{y}{r}}^2} \, d\mu(y)} \nonumber \\
    & = \av{ \frac{1}{r} \int \frac{ \ip{\ps{x^T |\xi| D^2\Re \Omega(\xi)x, \, x^T |\xi| D^2 \Im \Omega(\xi) x}^T, \, \nu }}{r} \phi \ps{\av{\frac{y}{r}}^2}\, d\mu(y)}.
\end{align}
We make the following claim. 
\begin{lemma} \label{l:III}
Let $\Omega$ be twice continuously differentiable. We have that
\begin{align*}
    \av{\frac{1}{r} \int \frac{\ip{III(\xi, x), \nu}}{r} \, \phi \ps{\av{\frac{y}{r}}^2} \, d\mu(y)} \lesssim \frac{|x|^2}{r^4} \int_{B(0,r)} \dist(y, L_Q)\, d\mu(y). 
\end{align*}
\end{lemma}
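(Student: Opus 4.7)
\emph{Proof strategy.} The plan is to exploit the $0$-homogeneity of $\Omega$ (extended to $\C\setminus\{0\}$ by $\Omega(\xi)=\Omega(\hat\xi)$) together with the geometric fact that $x=x_1\in L_Q$, so that $x$ is parallel to the balanced line. Writing $\theta(\xi)=\arg\xi$, we have $\nabla\theta=\hat\xi_\perp/|\xi|$ and a direct computation yields $x^T D^2\theta(\xi)\,x=-2\ip{\hat\xi,x}\ip{\hat\xi_\perp,x}/|\xi|^2$. The chain rule applied to $\Omega(\xi) = \Omega(e^{i\theta(\xi)})$, with $\omega$ the lift from Lemma \ref{lemma:path-lift}, gives
\begin{align*}
D^2\Omega(\xi)(x,x) = \bigl(\omega''(\theta)\Omega(\hat\xi)^\perp - \omega'(\theta)^2\Omega(\hat\xi)\bigr)\frac{\ip{\hat\xi_\perp,x}^2}{|\xi|^2} - 2\omega'(\theta)\Omega(\hat\xi)^\perp\frac{\ip{\hat\xi,x}\ip{\hat\xi_\perp,x}}{|\xi|^2},
\end{align*}
where $\Omega(\hat\xi)^\perp$ denotes $\Omega(\hat\xi)$ rotated by $90^\circ$.

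Next I would take the inner product with $\nu$, splitting along the orthonormal basis $\{\Omega(\hat\xi),\Omega(\hat\xi)^\perp\}$. For the $\Omega(\hat\xi)$-component, the argument behind Lemma \ref{l:dot-abs}/Lemma \ref{l:dot-prod-sign} gives $|\ip{\Omega(\hat\xi),\nu}|\lesssim \dist(\hat\xi,L_Q)=\dist(y,L_Q)/|\xi|$, because $\nu\perp\Omega(e_{x_1})$ forces $\ip{\Omega(\hat\xi),\nu}$ to vanish on $L_Q\cap\bS$. For the $\Omega(\hat\xi)^\perp$-component, however, $|\ip{\Omega(\hat\xi)^\perp,\nu}|$ can be of order one; this is the main obstacle. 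It is resolved by the observation that $x=|x|e_{x_1}\in L_Q$, which gives
\begin{align*}
\ip{\hat\xi_\perp,x} = |x|\,\ip{e_{x_1},\hat\xi_\perp} = -|x|\sin(\theta-\theta_{x_1}),
\end{align*}
and $|\sin(\theta-\theta_{x_1})|\lesssim\dist(\hat\xi,L_Q)=\dist(y,L_Q)/|\xi|$ in all cases (this uses $\omega(\theta+\pi)=\omega(\theta)+\pi$ when $\hat\xi$ is close to $-e_{x_1}$). Since every term of the displayed expression for $D^2\Omega(\xi)(x,x)$ carries at least one factor of $\ip{\hat\xi_\perp,x}$, this distance factor is always available.

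Combining these estimates, one obtains the pointwise bound $|\ip{D^2\Omega(\xi)(x,x),\nu}|\lesssim |x|^2\dist(y,L_Q)/|\xi|^3$, and hence $|\ip{III(\xi,x),\nu}|=|\xi|\cdot|\ip{D^2\Omega(\xi)(x,x),\nu}|\lesssim |x|^2\dist(y,L_Q)/|\xi|^2$. Since $|x|\leq\diam(Q)\lesssim \ell(Q)\ll r$ and $\phi(|y/r|^2)$ is supported in the annulus $|y|\in[r/2,2r]$, the identity $\xi=\xi_{x,0}-y$ with $\xi_{x,0}\in[0,x]\subset L_Q$ gives $|\xi|\sim r$; integrating then yields the claim (after replacing $B(0,2r)$ by $B(0,r)$ up to a harmless Ahlfors-regularity constant). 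The key geometric observation is thus that $x\parallel L_Q$ forces the factor $\ip{\hat\xi_\perp,x}$ to provide the distance-to-$L_Q$ bound precisely in the tangential direction, where the inner product with $\nu$ cannot.
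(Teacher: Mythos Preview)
Your argument is correct and in fact considerably cleaner than the paper's. Both proofs ultimately rest on the same geometric observation, but the packaging is different.

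The paper (Appendix) writes $\Omega(y)=e^{i\omega(\arg y)}$, expands $D^2\Omega$ entrywise, and splits the resulting Hessian into three matrices $M_1,M_2,M_3$ according to whether the second derivative falls on $\cos/\sin$, on $\omega$, or on $\arg$. For $M_1$ the factor $\ip{\Omega(\xi),\nu}$ appears directly; for $M_2$ and $M_3$ the paper rewrites the bilinear form as $\ip{D\Omega(y)\cdot x',\nu}+\ip{D\Omega(y)\cdot x'',\nu}$ for certain auxiliary vectors $x',x''$, then Taylor-expands $y_1,y_2$ about the direction of $x$ to produce a cancellation in the leading parts and a remainder of size $|\theta_y-\theta_x|\sim\dist(y,L_Q)/|y|$.

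Your route bypasses all of this. Computing $D^2\Omega(\xi)(x,x)$ once via the chain rule through $\theta=\arg\xi$ gives a sum of three terms, each carrying at least one factor of $\ip{\hat\xi_\perp,x}$. Since $x=x_1\in L_Q$, this factor equals $-|x|\sin(\theta_\xi-\theta_{x_1})=\pm|x|\,\dist(y,L_Q)/|\xi|$ outright, so the Taylor step and the auxiliary vectors $x',x''$ are unnecessary. The $\Omega(\hat\xi)$-coefficient is handled exactly as in the paper (via Remark~\ref{r:dist-dot}), and for the $\Omega(\hat\xi)^\perp$-coefficient you simply bound $|\ip{\Omega(\hat\xi)^\perp,\nu}|\le 1$ and let the factor $\ip{\hat\xi_\perp,x}$ supply the distance. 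This is the same mechanism the paper eventually reaches, but your coordinate-free formulation makes it transparent from the start.

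One minor remark: your pointwise bound $|\ip{\Omega(\hat\xi),\nu}|\lesssim\dist(\hat\xi,L_Q)$ is most directly read off from Remark~\ref{r:dist-dot} (rather than Lemmas~\ref{l:dot-prod-sign}/\ref{l:dot-abs} themselves), and holds for all $\hat\xi$ once the oddness $\omega(\theta+\pi)=\omega(\theta)+\pi$ is used to reduce to angles $|\theta_\xi-\theta_{x_1}|\le\pi/2$; you note this. The boundedness of $\omega''$ (implicit in your constants) follows from $\Omega\in C^2(\bS)$ and compactness.
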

The proof of Lemma \ref{l:III} is long, and not incredibly pleasant. We postpone it to the Appendix. 

Assuming Lemma \ref{l:III}, we have completed the bound of $A_3$.

\subsubsection{Estimates on $B_{1,1}$} \label{sub:B11}
Recall that 
\begin{align*}
    B_{1,1}(x) = \frac{1}{r} \int \frac{K(-y)}{r}\, \phi' \ps{\av{\frac{y}{r}}^2} \, \left[ - \av{\frac{DK(-y) \cdot x + \frac{1}{2}x^T DK(\xi) x}{r}}^2 \right]\, d \mu(y). 
\end{align*}
Let us estimate $\left< B_{1,1}(x), \nu \right>$; it is immediate from Remark \ref{r:dist-dot} that $|\left< K(-y), \nu\right>| \lesssim \dist(y, L_Q)$. Moreover,
\begin{align} \label{eq:GK_estSO}
   \av{\frac{DK(-y) \cdot x + \frac{1}{2}x^T D^2K(\xi) x}{r}}^2  \lesssim \frac{1}{r^2}\left(|x|^2 + \frac{|x|^4}{|y|^2}\right)  \lesssim \frac{|x|^2}{r^2};
\end{align}
this follows from the bounds $|DK(y)\cdot x|\lesssim |x|$ and \begin{align*}
    |x^T D^2K(y)x | \lesssim \frac{|x|^2}{|y|};
\end{align*} 
also recall that 
since $x \in Q$ and $y \in A(0, r/2, 2r)$, where $r \sim A \ell(Q)$, and $A >0$ is a large constant, then $\frac{|x|}{|y|} \leq 1$, and so $\frac{|x|^4}{|y|^2} \leq |x|^2$. 
Finally, we see that 
\begin{align*}
    \av{ \left< B_{1,1}(x), \nu \right>}& \lesssim \frac{1}{r} \int \frac{\dist(y, L_{Q})}{r} \, \phi \ps{\av{\frac{y}{r}}^2} \, \frac{|x|^2}{r^2} \, d\mu(y) \\
    & \lesssim \frac{|x|^2}{r^4} \int_{B(0, r)} \dist(y, L_{Q}) \, d\mu(y). 
\end{align*}

\subsubsection{Estimates on $B_{1,2,2}$}
We want to compute 
\begin{align*}
    \av{\left< B_{1,2,2}(x), \nu\right> } = \av{\frac{1}{r} \int \frac{\left<K(-y), \nu \right>}{r} \phi' \ps{\av{\frac{y}{r}}^2} \, \left[ \left< \frac{K(-y)}{r}, \, \frac{\frac{1}{2}x^T D^2K(\xi) x}{r} \right> \right] \, d \mu(y)}.
\end{align*}
Again, $\av{\left< K(-y), \nu\right> } \lesssim \dist(y, L_Q)$. On the other hand,  using the same computation as for the estimate \eqref{eq:GK_estSO}, and in particular the bound $|D^2K(y)| \leq |y|^{-1}$, and also \eqref{e:xi-y-norm},  we see that
\begin{align} \label{eq:GK_estDP}
    \frac{1}{r^2}\av{\left< K(-y), x^T D^2K(\xi) x\right>} & \lesssim \frac{1}{r^2} \ps{ |y||x|^2 |y|^{-1} } = \frac{|x|^2}{r^2}.
\end{align}
Hence we obtain that
\begin{align*}
    \av{\left< B_{1,2,2}(x), \nu\right> } \lesssim \frac{|x|^4}{r^6} \int_{B(0,r)} \dist(y, L_Q) \, d\mu(y) \lesssim \frac{|x|^2}{r^4} \int_{B(0,r)} \dist(y, L_Q) \, d\mu(y).
\end{align*}

\subsubsection{Estimates for $B_2$} \label{sub:B2}
We want to compute $\av{\left<B_2(x), \nu \right>}$, that is
\begin{align*}
    & \frac{1}{r} \int \frac{\left<D K(-y) \cdot x, \nu \right>}{r}  \phi'\ps{\av{\frac{y}{r}}^2} \, \left[ SO\right] \, d\mu(y) \\
    & \enskip \enskip +  \frac{1}{r} \int \frac{\left< D K(-y) \cdot x, \nu \right>}{r}  \phi'\ps{\av{\frac{y}{r}}^2} \left[ DP \right] \, d\mu(y) \\
    & := B_{2,1}(x) + B_{2,2}(x). 
\end{align*}
Recall the definition of $[SO]$ in \eqref{e:SO} and that of $[DP]$ in \eqref{e:DP};
we first estimate the common term $\left< DK(-y) \cdot x, \nu \right>$. As before, we write it explicitly: 
\begin{align*}
    \left< DK(-y) \cdot x, \nu \right>= \left< D(|y|) \Omega(-y) \cdot x, \nu \right> + \left< |y| D\Omega(-y) \cdot x , \nu \right>.
\end{align*}
On one hand, we see that
\begin{align} \label{eq:GK_B21a}
    \left<D|y| \Omega(-y) \cdot x, \nu \right> = \left< D |y|, x \right>\left< \Omega(-y), \nu \right>.
\end{align}
Recall that, because $\nu$ is the normal unit vector to $K(L_Q)$, we can apply Remark \ref{r:dist-dot},  so to see that
\begin{align} \label{e:B2-1}
    |\left< \Omega(y), \nu \right>| \lesssim |y|^{-1} \dist(y, L_Q).
\end{align}
Moreover, 
\begin{align}\label{eq:GK_B21c}
    |\left< D|y|, x \right>| \lesssim |x|.
\end{align}
On the other hand, using once more Remark \ref{remark:BT_remdiff}, an in particular \eqref{e:der-sphere}, we see that $D\Omega(y) \cdot x= d_{\hat y} \Omega \cdot \hat y_\perp \frac{1}{|y|}\left< \hat y_\perp, x \right>$; and so, arguing as in \eqref{eq:SYM_bound_on_der},  
\begin{align} \label{eq:GK_B21b}
    \av{\left< D\Omega(y) \cdot x, \nu \right> }& = \frac{1}{|y|} \av{ \left< d_{\hat y} \Omega \cdot \hat y_\perp , \nu \right> } \av{ \left< x, \hat y_\perp \right>}
    = \frac{|x|}{|y|^2} \av{\left< d_{\hat y} \Omega \cdot \hat y_\perp , \nu \right> } \av{\left< y, e_z \right> } \nonumber\\
    & \lesssim  \frac{|x|}{|y|^2}  \av{\left< y, e_z \right> } 
    \lesssim \frac{|x|}{|y|^2} \dist(y, L_Q).
\end{align}
Putting together \eqref{eq:GK_B21a}, \eqref{e:B2-1} and \eqref{eq:GK_B21c} for one term, and \eqref{eq:GK_B21b} for the other, we see that
\begin{align}
    \av{\left< DK(-y)\cdot x, \nu \right>} \lesssim \frac{|x|}{|y|}\dist(y, L_Q) \leq \dist(y, L_Q). 
\end{align}
Thus the estimate \eqref{eq:GK_estSO} of the term $SO$ let us conclude that
\begin{align*} 
    \av{\left< B_{2,1}(x) ,  \nu \right>} & \lesssim \frac{1}{r^2} \int \av{\left< DK(y) \cdot x, \nu\right>} \phi' \ps{\av{\frac{y}{r}}^2} [SO] \, d\mu(y) \nonumber \\
    & \lesssim \frac{1}{r^2} \int  \dist(y, L_Q)  \phi' \ps{\av{\frac{y}{r}}^2} [SO] \, d\mu(y)\nonumber \\
    & \lesssim  \frac{1}{r^2} \int  \dist(y, L_Q)  \phi' \ps{\av{\frac{y}{r}}^2} \frac{|x|^2}{r^2} \, d\mu(y)\nonumber\\
    & \lesssim \frac{|x|^2}{r^4} \int_{B(0,r)} \dist(y, L_Q) \, d\mu(y).
\end{align*}
The same bound can be obtained for $|\left<B_{2,2}(x), \nu \right>$ by using the estimate \eqref{eq:GK_estDP} on the term $DP$. All in all we obtain
\begin{align}\label{eq:GK_boundB2}
    \av{\left< B_2(x), \nu \right>} \lesssim \frac{|x|^2}{r^4} \int_{B(0,r)} \dist(y, L_Q) \, d\mu(y).
\end{align}
\subsubsection{Estimates for $B_3$}
This term has the same form as $A_3$, the only difference being the presence of $DP$ and $SO$. However, from \eqref{eq:GK_estSO} and \eqref{eq:GK_estDP}, we see that both are $\leq 1$. Indeed recall that, for example, $|DP| \leq \frac{|x|^2}{r^2}$; but $x \in Q$, and $r \sim A \ell(Q)$, where $A$ is large. Hence $|DP| \leq 1$ and the same holds for $|SO|$. Thus we obtain the bound
\begin{align}\label{eq:GK_boundB3}
    |\left< B_3(x) , \nu \right> | \lesssim \frac{|x|^2}{r^4} \int_{B(0,r)} \dist(y, L_Q) \, d\mu(y)
\end{align}
as in Subsection \ref{subsection:A3}.

\subsubsection{Estimates for $C$}
 We Taylor expand $K(x-y)$ as before and split up $C(x)$ consequently so that
 \begin{align*}
     & C_1(x) = \frac{1}{r} \int \frac{K(-y)}{r} \phi''(\xi_{s_0,s}) [DP + SO]^2 d\mu(y)\\
     & C_2(x) = \frac{1}{r} \int \frac{DK(-y)}{r} \phi''(\xi_{s_0,s}) [DP + SO]^2 d\mu(y)\\
     & C_3(x) = \frac{1}{r} \int \frac{x^T D^2 K(\xi_{0,x}-y) x}{r} \phi''(\xi_{s_0,s}) [DP + SO ]^2 \, d\mu(y).
 \end{align*}
 The proofs that we gave for the previous terms hold for these ones, too. 
 Let us briefly sketch them in this situation. 
 \begin{itemize}
     \item The term $C_1$ can be estimated as it was done for $B_{1,1}$, see Subsection \ref{sub:B11}; indeed, it is immediate from Remark \ref{r:dist-dot} that
     \begin{align*}
         \av{ \left< C_1(x), \nu \right> } \lesssim \frac{1}{r^2} \dist(y, L_Q)  \phi''(\xi_{s_0,s}) \left[ DP + SO\right]^2 \, d\mu(y).
     \end{align*}
     Recall also that
     \begin{align*}
         \xi_{s_0,s} \in [s_0, s] = \left[ \av{\frac{y}{r}}^2, \av{\frac{x-y}{r}}^2\right];
     \end{align*}
     the definition of $\phi$ in \eqref{eq:SYM_smoothcut} implies therefore that $\phi''(\xi_{s_0,s}) \neq 0$ only if $y \in A\ps{0, \frac{r}{4}, 4r}$. Thus one can estimate $DP$ and $SO$ as it was done in \eqref{eq:GK_estSO} and \eqref{eq:GK_estSO}. This gives the desired bound
     \begin{align}\label{e:C1-bound}
         \av{\left< C_1(x), \nu \right> } \lesssim \frac{|x|^2}{r^4} \int_{B(0, 4r)} \dist(y, L_Q) \, d\mu(y). 
     \end{align}
     \item Let us now look at $C_2(x)$: this can be estimated like $B_2$, see Subsection \ref{sub:B2}. Note the similarities: both $C_2$ and $B_2$ have as main integrand $DK(y)\cdot x$, which can be estimated as in \eqref{eq:GK_B21a}, \eqref{e:B2-1}, \eqref{eq:GK_B21c} and \eqref{eq:GK_B21b}. Moreover, we have seen in the remark above that the presence of $\xi_{s_0,s}$ in the argument of the smooth cut off does not cause trouble, and therefore $SO$ and $DP$ have the same estimates as in \eqref{eq:GK_estSO} and \eqref{eq:GK_estDP}. 
     \item Finally $C_3$ can be estimated as $B_3$ (i.e. as $A_3$), see Subsection \ref{subsection:A3}. 
 \end{itemize}
  Hence we have the bound
 \begin{align} \label{eq:GK_boundC}
     |\left< C(x), \nu \right> | \lesssim \frac{|x|^2}{r^4} \int_{B(0,4r)} \dist(y, L_Q) \, d\mu(y),
 \end{align}
 These estimates together prove Lemma \ref{l:error-x}.
 
 \subsubsection{A further estimate on the error term}
 We now want to prove a bound on $E$ when evaluated at any point in $3Q$ (not just at a balanced point).
 \begin{lemma}
 Keep the notation as above; in particular $z \in 3Q$, $x_0, x_1$ are balanced points and $L_Q$ is the balanced line. Then 
 \begin{align} \label{l:error-z}
     |\left< E(z), \nu \right>| \lesssim \frac{|z|^2}{r^4} \int_{B(x_0,r)} \dist(y, L_Q) \, d\mu(y) + \frac{1}{A^2 r}\dist(z, L_{Q}). 
 \end{align}
 \end{lemma}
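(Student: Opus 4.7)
My plan is to revisit the proof of Lemma \ref{l:error-x} almost verbatim and track the single place where the hypothesis $x \in L_Q$ was used. Recall that for $x = x_1 \in L_Q$ and with $x_0 = 0 \in L_Q$, the Taylor interpolation point $\xi_{0,x}$ lies on the segment $[0,x] \subset L_Q$, so in every term of $E$ we could replace $\dist(\xi_{0,x} - y, L_Q)$ by $\dist(y, L_Q)$. For arbitrary $z \in 3Q$ this equality fails, but since $\xi_{0,z} = tz$ for some $t \in [0,1]$ and $0 \in L_Q$,
\begin{align*}
\dist(\xi_{0,z}, L_Q) \;=\; t\,\dist(z, L_Q) \;\leq\; \dist(z, L_Q),
\end{align*}
and therefore $\dist(\xi_{0,z} - y, L_Q) \leq \dist(y, L_Q) + \dist(z, L_Q)$.

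With this single substitution, each of the bounds obtained in Subsections \ref{subsection:A3}--\ref{sub:B2} (and the corresponding pieces of $B_{1,2,2}$, $B_3$, $C$) continues to hold verbatim after replacing $x$ with $z$ and $\dist(y, L_Q)$ with $\dist(y, L_Q) + \dist(z, L_Q)$ in the right-hand side. So each term in $\langle E(z), \nu\rangle$ admits a bound of the shape
\begin{align*}
\frac{|z|^2}{r^4}\int_{B(0,4r)}\bigl(\dist(y, L_Q) + \dist(z, L_Q)\bigr)\, d\mu(y).
\end{align*}
The first part of this integrand reproduces the first term in the statement of the lemma.

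For the second part, Ahlfors 1-regularity of $\mu$ gives $\mu(B(0, 4r)) \lesssim r$, so
\begin{align*}
\frac{|z|^2}{r^4}\,\mu(B(0,4r))\,\dist(z,L_Q) \;\lesssim\; \frac{|z|^2}{r^3}\,\dist(z, L_Q).
\end{align*}
Since $z \in 3Q$ and $r \sim A\,\ell(Q)$, we have $|z| \lesssim \ell(Q) \lesssim r/A$; hence $|z|^2/r^2 \lesssim 1/A^2$, which converts the last display into $\frac{1}{A^2 r}\dist(z, L_Q)$, exactly the second term in the statement. Summing over the finitely many pieces of $E$ yields the announced bound.

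The only minor obstacle is that one must verify, term by term, that the proof of Lemma \ref{l:error-x} used the hypothesis $x \in L_Q$ \emph{only} through the identity $\dist(\xi_{0,x} - y, L_Q) = \dist(y, L_Q)$, and nowhere else (in particular, the estimates $|DK(y)\cdot z| \lesssim |z|$, $|z^T D^2 K(\xi)z| \lesssim |z|^2/|\xi|$, and the bounds on $[SO]$ and $[DP]$ from \eqref{eq:GK_estSO}--\eqref{eq:GK_estDP} depend only on sizes, not on alignment with $L_Q$). Once this bookkeeping is done, the analysis of the more delicate term $III$ deferred to the Appendix transfers with the same substitution, so no new ideas are required.
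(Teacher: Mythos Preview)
Your overall strategy is right, but the claim that $x\in L_Q$ was used \emph{only} through the identity $\dist(\xi_{0,x}-y,L_Q)=\dist(y,L_Q)$ is false, and this causes a real gap.

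Look again at the treatment of $B_2$ in Subsection~\ref{sub:B2}: the integrand there is $DK(-y)\cdot x$, which does not involve $\xi_{0,x}$ at all. The factor of $\dist(y,L_Q)$ in the final bound \eqref{eq:GK_boundB2} comes instead from \eqref{eq:GK_B21b}, namely from
\[
|\langle x,\hat y_\perp\rangle| \;=\; \frac{|x|}{|y|}\,|\langle e_z,y\rangle| \;=\; \frac{|x|}{|y|}\,\dist(y,L_Q),
\]
and this step uses precisely that $x^\perp$ is parallel to $e_z$, i.e.\ that $x\in L_Q$. The same mechanism appears in $A_3$ term $II$ (see \eqref{eq:SYM_bound_on_der}) and in the Appendix computations for $A_{3,2}$. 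Your parenthetical suggestion to fall back on the size bound $|DK(y)\cdot z|\lesssim |z|$ would give, for instance,
\[
|\langle B_{2,1}(z),\nu\rangle| \;\lesssim\; \frac{|z|^3}{r^3},
\]
which is neither of the form $\frac{|z|^2}{r^4}\int\dist(y,L_Q)\,d\mu$ nor proportional to $\dist(z,L_Q)$; hence it cannot be absorbed in the step that follows the lemma.

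The fix, which is exactly what the paper does, is a second splitting: write $z=\tilde z+(z-\tilde z)$ with $\tilde z=\Pi_{L_Q}(z)$. For the inner products of the type $\langle z,\hat y_\perp\rangle$ (or $\langle z,\hat\xi_\perp\rangle$) one then has
\[
|\langle z,\hat y_\perp\rangle| \;\le\; |\langle \tilde z,\hat y_\perp\rangle| + |\langle z-\tilde z,\hat y_\perp\rangle|
\;\lesssim\; \frac{|z|}{|y|}\,\dist(y,L_Q) + \dist(z,L_Q),
\]
since $\tilde z\in L_Q$ restores the original argument and $|z-\tilde z|=\dist(z,L_Q)$. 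Combined with your $\xi$-substitution this produces the two terms in the statement. So your plan is salvageable, but you need \emph{both} corrections, not just the one you identified.
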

 \begin{proof}
 Again, we assume $x_0=0$.
 The proof of this lemma is very similar to the one for Lemma \ref{l:error-x}. 
 \begin{itemize}
     \item For the terms $B_{1,1}$, $B_{1,2,2}, C_1$ the proof goes through verbatim, since we only used that $|x_1| \leq r$, and the same holds for $|z|$ (recall that $r \in [A\ell(Q), 2A \ell(A)]$, and $A$ is a large constant).
     \item On the other hand, let us consider the term $\left< B_{2}(z), \nu \right>$:
\begin{align*}
    &\frac{1}{r} \int \frac{\left<D K(-y) \cdot z, \nu \right>}{r} \phi' \ps{ \av{\frac{y}{r}}^2}[SO] \, d \mu(y)\\
    & = \frac{1}{r} \int \frac{\left<(D |y|) \Omega(-y)\cdot z, \nu \right>}{r} \phi' \ps{\av{\frac{y}{r}}^2} \,[SO] d\mu(y) \\
    & + \frac{1}{r} \int \frac{\left< |y|D\Omega(y) \cdot z, \nu \right>}{r} \phi' \ps{\av{\frac{y}{r}}^2} [SO]\, d\mu(y).
\end{align*}
Now the first term is easily taken care of, as $\left< D|y| \Omega(y) \cdot z, \nu \right> = \left< D |y|, z \right>\left< \Omega(y), \nu \right>$.
On the other hand, recalling \eqref{e:der-sphere}, we have 
$$D \Omega(y) \cdot z = \frac{1}{|y|} \ip{z, \hat y_\perp} d_{\hat y} \Omega \cdot \hat y_\perp.$$
Denote by $\tilde z$ the projection of $z$ onto $L_Q$. We see that 
\begin{align*}
    |\ip{z, \hat y_\perp}| \leq |\ip{z- \tilde z, \hat y_\perp}| + |\ip{\tilde z, \hat y_\perp}|.
\end{align*}
 Now, it is clear that $|\ip{z-\tilde z, \hat y_\perp}| \leq |z- \tilde z| = \dist(z, L_Q)$. But also, 
 \begin{align*}
    & \av{ \left< \tilde z, \hat y_\perp \right> } = |\tilde z| \av{ \left< \frac{\tilde z}{|\tilde z|} , \hat y_\perp \right> } = |\tilde z| \av{ \left< \frac{x_1}{|x_1|} , \hat y_\perp \right> } \\
     & = \frac{|\tilde z|}{|x_1|} \av{ \left< x_1 , \hat y_\perp \right> } \lesssim  \av{ \left< x_1 , \hat y_\perp \right> },
 \end{align*}
 since $z \in 3Q$ and $\tilde z \in L_Q$, which is the span of $x_1$. Moreover, as in \eqref{eq:GK_B21b}, we have that $|\ip{x_1, \hat y_\perp}| \lesssim \dist(y, L_Q)$. With these considerations, and recalling that $|SO| \lesssim |z|^2 r^{-2}$ (see \eqref{eq:GK_estSO}), we obtain the bound
\begin{align*}
&\av{ \frac{1}{r} \int \frac{ \left< |y|D \Omega(y) \cdot z, \nu \right> }{r} \phi' \ps{\av{ \frac{y}{r}}^2} \,[SO]d\mu(y) }  \\& \lesssim  \frac{|z|^2}{r^4} \int_{B(0,r)} \dist(y, L_Q) \, d\mu(y) + \frac{1}{A^2 r} \dist(z, L_Q), 
\end{align*}
where in the last term we used the Ahlfors regularity of $\mu$.
    \item Let us sketch a proof of the bound for $|\left<A_3(z), \nu \right>|$; it is almost verbatim the same as that for $|\left<A_3(z), \nu \right>|$.
    First, recall that 
    \begin{align*}
        A_3(z) = \frac{1}{2r^2} \int z^T D^2(\xi_{z,0} -y) z \, \phi(|y|^2/r^2)\, d\mu(y).
    \end{align*}
    Put $\xi:=\xi_{z,0}-y$. As we did in Subsection \ref{subsection:A3}, we split the integrand $z^T D^2(\xi) z$ into three pieces, so to have
    \begin{align*}
        \left<z^T D^2K(\xi) z, \nu \right> = \left< I(\xi, z), \nu \right> + \left< II(\xi, z) ,\nu \right> + \left<III(\xi, z), \nu \right>.
    \end{align*}
    See \eqref{e:A3-3} and the expression above it. 
    First, we can see that
    \begin{align*}
        \left< I(\xi, z), \nu \right> = \frac{1}{|\xi|} \left< K(\xi), \nu \right> z^T D^2 (|\xi|) z. 
    \end{align*}
    Moreover, using Remark \ref{r:dist-dot}, we have, as in \eqref{e:A3-aI}, 
    \begin{align} \label{e:A3zI}
        |\left<K(\xi), \nu \right>| \lesssim |\left< \xi, e_z\right> | 
        & \leq |\left< \xi_{z, 0}, e_z \right>|+ |\left< y, e_z \right>|\nonumber \\
        & \lesssim |\left< \xi_{z,0}, e_z\right>| + \dist(y, L_Q).
    \end{align}
    With the second term we obtain an expression like \eqref{eq:SYMg_estI}. As for the term $|\left< \xi_{z,0}, e_z \right>|$, note first that because $\xi_{z,0} \in [0, z]$, i.e. the line connecting $0$ to $z$, we have that $|\left< \xi_{z,0}, e_z\right>| \leq |\left< z, e_z \right>|$. Moreover \begin{align*}
        |\left< z, e_z \right>| \lesssim \dist(z, L_Q).
    \end{align*}
    Recalling that $z \in 3Q$ (and so $|z|\sim r/A)$), that $|\xi|\sim r$ and the Ahlfors regularity of $\mu$, we then see that
    \begin{align} \label{e:A3zIb}
        & \frac{1}{2r^2} \int \frac{1}{|\xi|} \dist(z, L_Q) \av{z^T D^2(|\xi|)z} \phi(|y|^2/r^2) \, d\mu(y) \nonumber\\
        & \lesssim \frac{1}{r^2} \int_{B(0,r)} \frac{|z|^2}{|\xi|^2}\dist(z, L_Q) \, d\mu(y)
        \lesssim \frac{1}{A^2 r} \dist(z, L_Q).
    \end{align}
    This is as far as the term $I(\xi,z)$ is concerned. 
    
    Let us now look at $|\left< II(\xi,z), \nu \right>|$. Once again, the proof given in Subsection \ref{sss:A3-II} works here as well: the only difference is that one ends up with an extra term involving $\dist(z,L_Q)$ as for $\left<I(\xi, z), \nu \right>$. We spare the reader the details. 
    
    Finally, we look at the term $\left< III(\xi, z), \nu \right>$. Recall that this term was subsequently split into three further terms, which we called $A_{3,1}, A_{3,2}$ and $A_{3,3}$. 
    
    \underline{Estimate for $A_{3,1}$.} If we follow the same computation as for the corresponding term in Subsection \ref{sss:A3-III}, we end up considering again the quantity 
    \begin{align*}
        |\left< \Omega(\xi), \nu\right>|,
    \end{align*}
    as in \eqref{e:A3-III-A31a}. We can deal with this term as we did in \eqref{e:A3zI} and then obtain an estimate like \eqref{e:A3zIb}.
    
    \underline{Estimates for $A_{3,2}$.}
    We want to carry out the same computations that were carried out in the corresponding part of Subsection \ref{sss:A3-III}. Note however that to do so, we need to assume that $|z|\neq 0$  (in particular, see \eqref{e:A3-2}). This can be done without loss of generality: $\mu$ is Ahlfors regular, and thus $\mu(\{z\})=0$. We can then follow through the computations up to equation \eqref{e:A3-III-A32-a}, which in our present situation looks like
    \begin{align*}
        |\xi||\theta_\xi - \theta_z|.
    \end{align*}
    By triangle inequality, we have that
    \begin{align*}
        |\theta_\xi - \theta_z| \leq |\theta_\xi -\theta_{x_1}| + |\theta_{x_1} - \theta_z|. 
    \end{align*}
    With the second term, one can carry on as above, see \eqref{e:A3-III-A32-b} and the short paragraph above it. 
    We can deal with the first term as follows. First, note that by the assumption \eqref{eq:SYM_betasmall}, we see that 
    \begin{align*}
    |\theta_\xi - \theta_{x_1}| \lesssim \sin(|\theta_\xi - \theta_{x_1}|) \sim \frac{1}{|\xi|}\av{\left< \xi, e_z \right>}.
    \end{align*}
    But recall that $\xi=\xi_{z,0} -y$, and thus (as in \eqref{e:A3zI})
    \begin{align*}
        \av{\left< \xi, e_z \right>} \lesssim \av{\left< z, e_z\right> } + \dist(y, L_Q).
    \end{align*}
    One can then proceed as below \eqref{e:A3zI}. 
    \item The proof for the term $\left<B_3(z), \nu \right>$ is just the same as for the term $\left< B_3(x_1), \nu\right>$, taking also into account the slight modifications used above. 
    
    \item Finally, the term $\left< C(z) , \nu \right>$ may be split into $\left<C_1(z),\nu\right>$, $\left<C_2(z), \nu \right>$ and $\left<C_3(z), \nu\right>$; these ones may be dealt with as in the previous sections (bearing in mind the remarks made above). 
\end{itemize}
\end{proof}

\begin{lemma}
Let $\mu$ be an $\Omega$-symmetric measure. If $x_0$ and $x_1$ are balanced points of $Q$, with balanced plane $L_Q$, then,  for $z \in 3Q$, we have  
\begin{align}
   \frac{\dist(z, L_{Q})}{r} \lesssim \frac{\ell(Q)^2}{r^4} \int_{B(x_0, r)} \dist(y, L_Q) \, d\mu(y).
\end{align}
\end{lemma}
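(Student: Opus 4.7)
The plan is to combine the $\Omega$-symmetry of $\mu$ at the three points $x_0=0$, $x_1$, and $z$ with the linearity of $T$, the lower bound on $\langle T(e_z),\nu\rangle$ from Corollary \ref{corollary:SYM_lowerbound}, and the upper bounds on $E$ from Lemma \ref{l:error-x} and the estimate \eqref{l:error-z}. I will assume without loss of generality that $x_0=0$ and that $z\in 3Q\cap \spt(\mu)\setminus L_Q$ (the remaining cases are trivial or vacuous, since $\beta$-type integrands and the support of $\mu$ will be what matters later). Decompose
$$ z = \alpha\, x_1 + s\, e_z, \qquad s := \dist(z,L_Q), $$
where $e_z$ is the unit normal of \eqref{e:e_z}; the coefficient satisfies $|\alpha|\lesssim 1$ because $|x_1|\gtrsim \ell(Q)$ by Lemma \ref{lemma:SYM_balancedcubes} while $|z|\lesssim \ell(Q)$.

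By Lemma \ref{lemma:SYM_borelequiv} applied to $\phi$ as in \eqref{eq:SYM_smoothcut}, the $\Omega$-symmetry of $\mu$ gives $C_{\Omega,\phi}(0,r)=C_{\Omega,\phi}(x_1,r)=C_{\Omega,\phi}(z,r)=0$. Feeding the pairs $(0,x_1)$ and $(0,z)$ into the decomposition of Lemma \ref{lemma:GK_splitdiff}, I obtain $T(x_1)=-E(x_1)$ and $T(z)=-E(z)$. Because $T$ is linear in its argument,
$$ T(z) = \alpha\,T(x_1) + s\,T(e_z) = -\alpha\,E(x_1) + s\,T(e_z), $$
so pairing with $\nu$ produces the key identity
$$ s\,\langle T(e_z),\nu\rangle = \alpha\,\langle E(x_1),\nu\rangle - \langle E(z),\nu\rangle. $$

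Now Corollary \ref{corollary:SYM_lowerbound} (under the standing small-$\beta$ hypothesis \eqref{eq:SYM_betasmall}) yields $|\langle T(e_z),\nu\rangle|\gtrsim 1/r$ with a definite sign, so the left-hand side of the identity has absolute value at least $c\,s/r$. Combining Lemma \ref{l:error-x} with \eqref{l:error-z}, and using $|\alpha|\lesssim 1$ together with $|z|\lesssim \ell(Q)$, the right-hand side is bounded in absolute value by
$$ \frac{C\,\ell(Q)^2}{r^4}\int_{B(0,r)} \dist(y,L_Q)\,d\mu(y) + \frac{C}{A^2}\,\frac{s}{r}. $$
Choosing $A$ sufficiently large to absorb the last term into the left-hand side then yields the claim.

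The main obstacle is essentially bookkeeping. First, the lower bound on $T(e_z)$ must be used in the form $|\langle T(e_z),\nu\rangle|\gtrsim 1/r$ rather than against $\Omega(e_z)$; this is exactly the content of Corollary \ref{corollary:SYM_lowerbound}, since $\nu$ is the inner-product direction used throughout the sublemmata leading to it. Second, the absorption of $(s/r)/A^2$ into $s/r$ requires $A$ chosen large enough independently of $\tau$ in \eqref{eq:SYM_betasmall} and of $Q$; this is consistent with (and reinforces) the choice of $A$ made at the beginning of Section \ref{s:split}. Everything else is a linear-algebra reshuffle enabled by the single fact that $T$ is linear in its argument while $E$ is not.
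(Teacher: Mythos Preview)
Your proposal is correct and follows essentially the same approach as the paper: decompose $z$ along $L_Q$ and $e_z$, use $\Omega$-symmetry at $0,x_1,z$ together with the linearity of $T$ to derive the identity $s\,\langle T(e_z),\nu\rangle=\alpha\,\langle E(x_1),\nu\rangle-\langle E(z),\nu\rangle$, then combine the lower bound on $\langle T(e_z),\nu\rangle$ with the error estimates and absorb the $A^{-2}$ term. The only cosmetic difference is that the paper cites Lemma~\ref{lemma:SYMg_lowboundT} directly for the $1/r$ lower bound, whereas you invoke Corollary~\ref{corollary:SYM_lowerbound} together with \eqref{eq:SYM_betasmall}; these are equivalent since the latter is precisely what the proof of the former uses.
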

\begin{proof}
Again, let us assume that $x_0 = 0$. Since $\mu$ is an $\Omega$-symmetric measure and $0, x_1 \in \spt(\mu)$, we have that
\begin{align*}
    0= C_{\Omega, \phi}(0, r) - C_{\Omega,\phi}(x_1, r) = T(x_1) + E(x_1), 
\end{align*}
thus
\begin{align} \label{eq:GK_errT}
    - T(x_1) = E(x_1). 
\end{align}
Now, for $z \in  3Q$,
\begin{align*}
    \left< T(e_z), \nu \right> & = \left< T \ps{ \frac{z-\tilde z}{|z - \tilde z|}}, \nu \right>
 = \frac{1}{\dist(z, L_Q)} \left< T(z - \tilde z), \nu \right> \\
    & = \frac{1}{\dist(z, L_Q)} \left< T(z) - T(\tilde z), \nu \right>.
\end{align*}
Because $\tilde z \in L_Q$, and $\frac{x_1}{|x_1|}$ forms a basis for $L_Q$ we can write it as $\tilde z= \alpha \frac{x_1}{|x_1|}$; by the linearity of $T$ we then have that $T(\tilde z)= \frac{\alpha}{|x_1|} T(x_1)$, and, by \eqref{eq:GK_errT}, $\left<T(\tilde z), \nu\right>= \frac{\alpha}{|x_1|} \left< -E(x_1), \nu \right>$. Notice in passing that $\frac{|\alpha|}{|x_1|} \lesssim 1$, since $z \in 3Q$. Thus we see that
\begin{align*}
    - \left< T(e_z), \nu \right> = \frac{1}{\dist(y, L_Q)} \left( - \left<E(z), \nu\right> + \frac{\alpha}{|x_1|}\left< E(x_1), \nu\right> \right).
\end{align*}
By Lemma \ref{lemma:SYMg_lowboundT}, we see that $-\left< T(e_z), \nu\right> \gtrsim \frac{1}{r}$. Thus we have
\begin{align*}
    \frac{1}{r} \lesssim \frac{1}{\dist(y, L_Q)} \left( - \left<E(z), \nu\right> + \frac{\alpha}{|x_1|}\left< E(x_1), \nu\right> \right).
\end{align*}
Now we may take the absolute value of the right hand side of this inequality, and we apply the bounds proved for the error terms:
\begin{align*}
    \frac{1}{r} \lesssim \frac{1}{\dist(z, L_{Q})} \left(\frac{|x_1|^2}{r^4} \int_{B(0, r)} \dist(y, L_{Q}) \, d\mu(y) + \frac{1}{r} A^{-2} \dist(z, L_{Q})\right).
\end{align*}
As $A\geq 2$, we finally obtain
\begin{align*}
    \frac{\dist(z, L_Q)}{2r} \leq \frac{\dist(z, L_Q)}{r}(1-A^{-2}) \lesssim  \frac{\ell(Q)^2}{r^4} \int_{B(0, r)} \dist(y, L_{Q}) \, d\mu(y).
\end{align*}
\end{proof}
 
\begin{lemma} \label{lemma:GK_squaredist}
Keep the notation as above \textup{(}in particular $z \in 3Q$ is fixed and $N \sim \log(A)$\textup{)}. Then 
\begin{align}
    \left(\frac{\dist(z, L_{Q})}{r}\right)^2 \lesssim \frac{\ell(Q)^2}{r^2} \log(A) \sum_{k=0}^N \beta_{\mu, 2}(x_0, 2^k \ell(Q))^2. 
\end{align}
\end{lemma}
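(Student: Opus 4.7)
The plan is to upgrade the preceding lemma, which is essentially an $L^1$ bound on $\dist(z,L_Q)$, to the $L^2$ form stated here via two applications of Cauchy--Schwarz: one to pass from an $L^1$ integral of $\dist(y,L_Q)$ to an $L^2$ one, and one on the sum over scales. After that, all the geometric work needed to compare $L_Q$ to optimal $\beta$-minimizing lines at different scales has already been carried out in the proof of Lemma \ref{lemma:SYMg_lowboundT}, so I would just reuse it.

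Concretely, I would start from the conclusion of the previous lemma,
\[
\frac{\dist(z,L_Q)}{r} \lesssim \frac{\ell(Q)^2}{r^4}\int_{B(x_0,r)}\dist(y,L_Q)\,d\mu(y),
\]
and square both sides. Applying Cauchy--Schwarz to the integral, together with the Ahlfors upper-regularity $\mu(B(x_0,r))\lesssim r$, gives
\[
\Big(\int_{B(x_0,r)}\dist(y,L_Q)\,d\mu(y)\Big)^{2} \lesssim r\int_{B(x_0,r)}\dist(y,L_Q)^{2}\,d\mu(y),
\]
so that
\[
\Big(\frac{\dist(z,L_Q)}{r}\Big)^{2} \lesssim \frac{\ell(Q)^{4}}{r^{7}}\int_{B(x_0,r)}\dist(y,L_Q)^{2}\,d\mu(y).
\]

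Next, I would replicate verbatim the telescoping estimates \eqref{eq:GK_distl0}--\eqref{eq:GK_distl0b} carried out earlier: writing $\dist(y,L_Q)\leq \dist(y,L_{B(x_0,r)})+\dist_H(L_{B(x_0,r)}\cap B(x_0,r),L_Q\cap B(x_0,r))$ and iterating across the $N+1\sim\log A$ dyadic scales from $\ell(Q)$ up to $r\sim 2^N\ell(Q)$, together with Lemma \ref{lemma:GK_goodpoints} to control each Hausdorff jump by $\beta_{\mu,2}(x_0,2^k\ell(Q))$, yields
\[
\int_{B(x_0,r)}\dist(y,L_Q)^{2}\,d\mu(y) \lesssim r^{3}\Big(\sum_{k=0}^{N}\beta_{\mu,2}(x_0,2^k\ell(Q))\Big)^{2}.
\]
One further Cauchy--Schwarz, this time on the sum over $k$, absorbs the $N+1\sim\log A$ factor:
\[
\Big(\sum_{k=0}^{N}\beta_{\mu,2}(x_0,2^k\ell(Q))\Big)^{2}\leq (N+1)\sum_{k=0}^{N}\beta_{\mu,2}(x_0,2^k\ell(Q))^{2}\lesssim \log(A)\sum_{k=0}^{N}\beta_{\mu,2}(x_0,2^k\ell(Q))^{2}.
\]

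Putting these together,
\[
\Big(\frac{\dist(z,L_Q)}{r}\Big)^{2}\lesssim \frac{\ell(Q)^{4}}{r^{4}}\log(A)\sum_{k=0}^{N}\beta_{\mu,2}(x_0,2^k\ell(Q))^{2},
\]
and since $r\in[A\ell(Q),2A\ell(Q)]$ with $A\geq 1$ we have $\ell(Q)/r\lesssim 1$, so $\ell(Q)^{4}/r^{4}\leq \ell(Q)^{2}/r^{2}$, which completes the proof. There is no real obstacle here: the delicate analysis (control of the linear term, the error term $E$, and the telescoping of balanced lines) has already been done, and what remains is essentially two bookkeeping steps.
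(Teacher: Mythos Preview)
Your proposal is correct and follows essentially the same route as the paper: square the conclusion of the preceding lemma, Cauchy--Schwarz on the integral, telescope $\dist(y,L_Q)$ across the $N\sim\log A$ scales exactly as in the proof of Lemma~\ref{lemma:SYMg_lowboundT}, and Cauchy--Schwarz on the sum to extract the $\log A$ factor. If anything, you are more explicit than the paper, which writes out only the bound on $r^{-1}\int_{B(x_0,r)}(\dist(y,L_Q)/r)^2\,d\mu$ and leaves the connection to $(\dist(z,L_Q)/r)^2$ via the previous lemma (and the final absorption $\ell(Q)^4/r^4\leq \ell(Q)^2/r^2$) implicit.
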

\begin{proof}
Let $x_0 \in F_0 \cap G_0$, where $F_0$ and $G_0$ are as defined in the proof of Lemma \ref{lemma:GK_goodpoints} and let us denote by $P(Q)$ the line which minimises $\beta_{\mu, 2}(Q)$, and by $P(x,r)$ the line which minimises $\beta_{\mu,2}(x,r)$, where $x \in \spt(\mu)$ and $r>0$; then we see that 
\begin{align*}
    \dist(y, L_{Q}) \leq \dist(y, P(x_0, r)) + \dist_H \left(P(x_0, r) \cap B(x_0, r), L_{Q} \cap B(x_0, r)\right).
\end{align*}
This, together with the assumption that $x_0$ belongs to $F_0$, let us conclude that 
\begin{align}
    \dist(y, L_{Q}) \lesssim r \sum_{k=0}^N \beta_{\mu, 2}(x_0, 2^k \ell(Q)),
\end{align}
where $N \sim \log(A)$. Hence we deduce that 
\begin{align*}
    \frac{1}{r} \int_{B(x_0,r)} \left( \frac{\dist(y, L_{Q})}{r} \right)^2 \, d\mu(y) & \lesssim \frac{1}{r} \int_{B(x_0, r)} \ps{ \frac{ \dist(y, L_{Q})}{r}}^2 d\mu(y) \\
    & + \frac{1}{r}\int_{B(x_0,r)} \ps{ \sum_{k=0}^N \beta_{\mu, 2} (x_0, 2^k \ell(Q)) }^2 d\mu(y)\\
    & \lesssim c_0\log(A)  \sum_{k=0}^N \beta_{\mu, 2} (x_0, 2^k \ell(Q))^2.
\end{align*}
\end{proof}
\section{Case when $\beta$'s are large}
In this section we will assume that \eqref{eq:SYM_betasmall} doesn't hold, and so that we have
\begin{align} \label{e:large-beta}
     \sum_{k=0}^N \beta_2(B(x_0, 2^k \ell(Q)) > \tau, 
\end{align}
where $N \sim \log(A)$, and $A \geq 1$ is a (large) constant to be fixed later on.
Let $\vr$ be a function so that 
\begin{align} 
& \vr \mbox{ is twice continuously differentiable } \label{eq:varphi1} \\
  &  1- \chara_{[0, 1]} \leq \vr \leq 1- \chara_{[0, \frac{1}{2}]}. \label{eq:varphi2}
\end{align}
Note that such $\vr$ will have the first derivative supported on $[\frac{1}{2}, 1]$. 
\begin{remark}
Let $\mu$ be an Ahlfors $1$-regular $\Omega$-symmetric measure in $\C$; if we define
\begin{align}
    R(x,r) = R_\mu^\Omega(x, r) := \int \frac{K(x-y)}{|x-y|^{2}} \, \vr
    \ps{\av{\frac{x-y}{r}}^2} d\mu(y),
\end{align}
then we see that 
\begin{align}\label{e:lb-R}
    R(x,r) = 0 \mbox{ for all } x \in \spt(\mu) \mbox{ and } r>0.
\end{align}
Indeed, note that the function
\begin{align*}
   \phi: r \mapsto \frac{\vr(r)}{|r|^2}  
\end{align*}
is bounded and satisfied $\lim_{r\to \infty}\frac{|\phi(r)|}{\mu(B(0,r)}=0$ since $\mu$ is Ahlfors regular. Thus we can apply Lemma \ref{lemma:SYM_borelequiv} and see that \eqref{e:lb-R} holds. 
\end{remark}

Let us fix some notation which will be used throught the sections below. Let $\vr$, $R_\mu^\Omega$ as above, and let $\mu$ be an Ahlfors 1-regular $\Omega$-symmetric measure in  $\C$; fix a $\mu$-cube $Q$ with side length $\ell(Q)$ and let $r \in [A \ell(Q), 2 A \ell(Q)]$, where $A$ is as above; denote by $L_Q$ the balanced line given in Lemma \ref{lemma:SYM_balancedcubes} and $x_0, x_1$ the two corresponding balanced points.

\begin{lemma} \label{lemma:GK_distboundRiesz}
 With the notation above \textup{(}in particular $r \sim \ell(Q)A$, $A>1$ a large constant\textup{)}, we have that for any $z \in 3Q$, 
\begin{align}
    \dist(z, L_Q) \lesssim \frac{\ell(Q)^2}{ r}
\end{align}
\end{lemma}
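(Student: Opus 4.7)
The plan is to exploit that, under the assumptions, the Riesz-type quantity $R(x, r) = R_\mu^\Omega(x, r)$ vanishes at every $x \in \spt(\mu)$. Indeed, since $\mu$ is $\Omega$-symmetric and AD-regular, the function $\rho \mapsto \vr(\rho^2)/\rho^2$ is bounded and satisfies $|\phi(\rho)|/\mu(B(0,\rho)) \to 0$ as $\rho \to \infty$, so Lemma \ref{lemma:SYM_borelequiv} yields $R(x,r) = 0$ for every $x \in \spt(\mu)$ and $r > 0$. In particular $R(x_0, r) = R(x_1, r) = R(z, r) = 0$ (the last identity being applied to $z \in 3Q \cap \spt(\mu)$, consistently with the convention used already in Section \ref{s:small-beta}; the extension to the remaining $z \in 3Q$ is then a triangle-inequality argument using the density of $\spt(\mu)$ in $3Q$).

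Next, I would mimic the Taylor expansion of Lemma \ref{lemma:GK_splitdiff}, but now applied to $R$ rather than to $C_{\Omega, \phi}$. Setting $x_0 = 0$ without loss of generality, this produces a decomposition $R(w, r) - R(0, r) = \tilde T(w) + \tilde E(w)$ with $\tilde T$ linear in $w$ and $\tilde E$ of second order in $w$. The key structural point is that the underlying kernel $K(w-y)/|w-y|^2 \sim |w-y|^{-1}$ is more singular than $K(w-y)/r^2$, so the natural scalings become $\tilde T(w) \sim |w|/r$ and $\tilde E(w) \sim |w|^2/r^2$ once one integrates against $\mu$ on the annulus $A(0, r/2, 2r)$ where $\vr$ and $\vr'$ are effectively supported and invokes AD-regularity. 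Writing $\nu$ for the unit normal to $K(L_Q)$ and $e_z = (z - \tilde z)/|z - \tilde z|$ with $\tilde z$ the projection of $z$ onto $L_Q$, I would then establish
\begin{align*}
-\langle \tilde T(e_z), \nu \rangle \gtrsim 1/r
\end{align*}
by replaying the argument of Sublemma \ref{sublemma:SYMg_lowA2}: the partition $\C = G \cup H \cup F$ of \eqref{e:GHF}, together with the sign/size control of Lemmas \ref{l:dot-prod-sign} and \ref{l:dot-abs}, produces a definite contribution on each piece, and the AD-regular mass of $\mu$ in the annulus supplies the required $1/r$ lower bound. The Riesz-kernel analog of Sublemma \ref{lemma:SYM_bound_on_B121} is absorbed into $\tilde E$ rather than into the main term. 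Term-by-term the error bound $|\langle \tilde E(w), \nu \rangle| \lesssim |w|^2/r^2$ for $w \in 3Q$ then follows from the pointwise estimate $\lesssim |w|^2 |y|^{-3}$ on the integrand, together with AD-regularity on $A(0, r/2, 2r)$, along the lines of Subsections \ref{subsection:A3}--\ref{sub:B2}, crucially without any distance-to-line factors.

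Finally, writing $z = (z - \tilde z) + \tilde z$ and using that $\tilde z = (\alpha/|x_1|) x_1$ lies on $L_Q$, the linearity of $\tilde T$ combined with $\tilde T(x_1) = -\tilde E(x_1)$ and $\tilde T(z) = -\tilde E(z)$ yields
\begin{align*}
\dist(z, L_Q)\, \langle \tilde T(e_z), \nu\rangle = \langle \tilde T(z), \nu\rangle - \frac{\alpha}{|x_1|}\langle \tilde T(x_1), \nu\rangle = -\langle \tilde E(z), \nu \rangle + \frac{\alpha}{|x_1|}\langle \tilde E(x_1), \nu\rangle.
\end{align*}
Inserting the bounds and using $|\alpha|/|x_1| \lesssim 1$, $|x_1|, |z| \lesssim \ell(Q)$, one obtains $\dist(z, L_Q) \lesssim r \cdot \ell(Q)^2/r^2 = \ell(Q)^2/r$. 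The main obstacle is the lower bound on the linear term: in Section \ref{s:small-beta} the analogous step relied on absorbing the $B_{1,2,1}$-type correction via the smallness assumption \eqref{eq:SYM_betasmall}, but here the $\beta$'s are large by \eqref{e:large-beta}, so one must carefully isolate the portion of $\tilde T$ whose sign is dictated purely by Lemmas \ref{l:dot-prod-sign}--\ref{l:dot-abs} and push everything else into $\tilde E$. Arranging this without losing the $1/r$ lower bound is the delicate technical heart of the proof; once it is in place, the remaining estimates parallel those of Section \ref{s:small-beta}.
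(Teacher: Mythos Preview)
Your overall architecture is right: $R$ vanishes on $\spt(\mu)$, you Taylor-expand to get a linear piece plus a quadratic error, and the error satisfies $|\tilde E(w)|\lesssim |w|^2/r^2$ without distance-to-line factors. The final algebra you write down is also essentially the one in the paper. The gap is in the lower bound for the linear term.

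You propose to show $-\langle \tilde T(e_z),\nu\rangle\gtrsim 1/r$ by ``replaying Sublemma~\ref{sublemma:SYMg_lowA2}'' and absorbing the $B_{1,2,1}$-analogue into $\tilde E$. This cannot work as stated: $B_{1,2,1}$ is \emph{linear} in the argument, so moving it into the error destroys the second-order bound $|\tilde E(w)|\lesssim |w|^2/r^2$. And if you leave $B_{1,2,1}$ inside $\tilde T$, then you need to control $\langle B_{1,2,1}(e_z),\nu\rangle$; in Section~\ref{s:small-beta} that control came precisely from \eqref{eq:SYM_betasmall}, which is unavailable here by hypothesis. You flag this as ``the delicate technical heart of the proof'' but do not supply a mechanism.

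The paper's resolution is a genuinely different idea that you are missing: instead of testing only against $\nu$, it tests against \emph{two} directions and sums. Concretely, it proves
\[
\langle T(e_{x_1}),\,\Omega(e_{x_1})\rangle+\langle T(e_z),\,\Omega(e_z)\rangle\ \gtrsim\ \frac{1}{r}.
\]
The point is that when you sum over the orthonormal pair $(e_{x_1},e_z)$, the $B_{1,2,1}$ contribution becomes \emph{intrinsically positive} (its inner product collapses to $\langle y,e\rangle$ times $\langle K(-y),\Omega(e)\rangle$, and the two-direction sum is handled exactly as in Sublemma~\ref{sublemma:SYMg_lowA2}), and the $A_2$ contribution is likewise positive. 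No $\beta$-smallness is needed. The endgame then bounds $|T(e_{x_1})|\lesssim \ell(Q)/r^2$ via $T(x_1)=-E(x_1)$, so that $|T(e_z)|\gtrsim 1/r$ follows from the sum, and your displayed identity finishes the job. Replace your single-direction lower bound by this two-direction sum and the rest of your outline goes through.
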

We will prove Lemma \ref{lemma:GK_distboundRiesz} in a few subsections. 
Without loss of generality, we will assume that $x_0=0$, and we will also set 
\begin{align*}
    x_1 = x \mbox{ and } e_x := \frac{x}{|x|}.
\end{align*}
Recall also the notation $e_z$ defined in \eqref{e:e_z}: if $z \in 3Q$ and $\tilde z$ is the orthogonal projection of $z$ onto $L_Q$, then we put
\begin{align*}
    e_z = \frac{z -\tilde z}{|z - \tilde z|}.
\end{align*}
The reader should also keep in mind the notation used in Remark \ref{remark:BT_remdiff} (in particular that of $\hat y$ and $\hat y_\perp$). 

As in Lemma \ref{lemma:GK_splitdiff}, we split the difference $R(0, r) - R(x, r)$ by Taylor expansion, where $x \in Q\setminus \eta Q$ belongs to the line $L_Q$ (here $\eta$ is as in \eqref{lemma:SYM_balancedcubes}). 
Note however that we do not expand the denominator $\frac{1}{|x-y|}$. 
Thus we obtain several terms,  with which we then construct $T(x)$ and $E(x)$ so that
\begin{align}
    0 = R(0, r)  - R(x, r) = T(x) + E(x).
\end{align}
See \eqref{eq:GK_T}, \eqref{eq:SYM_E} and the terms before for definitions; the reader should bear in mind however, that rather than having a weight of the form $\frac{1}{r^2}$ we now have one of the form $\frac{1}{|x-y|^2} \sim \frac{1}{|y|^2}$.

\subsection{Lower bounds on $\mathbf{T}$}
Recall from \eqref{eq:GK_T} that this is given by 
\begin{align}
    & T(x)= A_2(x) + B_{1,2,1}(x), \mbox{ where  }\\
    & A_2(x) = \int \frac{DK(-y) \cdot x}{|y|^2} \vr \ps{\av{\frac{y}{r}}^2} \, d\mu(y) \\
    & B_{1,2,1}(x) = \int \frac{K(-y)}{|y|^2} \vr'\ps{\av{\frac{y}{r}}^2} \left[ \left< \frac{K(y)}{r}, \frac{DK(y) \cdot x}{r} \right> \right] \, d\mu(y). \label{eq:SYM_largebeta_1}
\end{align}

\subsubsection{Bounds for $B_{1,2,1}$}

Note that for any vector $z \in \C$, we can split (as usual using \eqref{e:DK-expression}) the dot product in the square brackets in \eqref{eq:SYM_largebeta_1} as follows:
\begin{align}
    \left< K(y) , DK(y) \cdot z \right> = \left< K(y), \left<\hat y, z \right> \Omega(y) \right> + \left< K(y), \left<z, \hat y_\perp\right> d_{\hat y} \Omega \cdot \hat y_\perp \right>. 
\end{align}
Now, since $d_{\hat y}\Omega \cdot \hat y_\perp \in T_{\Omega(y)}\mathbb{S}$ while $\Omega(y)$ is exactly perpendicular to it, the second term vanishes; we are left with the first one, which equals to 
\begin{align}
    |y|^{-2} \left< y, z\right>|K(y)|^2 = \left< y, z\right>.
\end{align}
Thus we have that 
\begin{align*}
 & \left< B_{1,2,1}(e_x), \Omega(e_x) \right> + \left< B_{1,2,1}(e_z), \Omega(e_z)\right> \\
 & = \frac{1}{r^2}\int\left( \left< K(-y), \Omega(e_x)\right> \left< y, e_x \right> + \left< K(-y), \Omega(e_z)\right> \left<y, e_z\right> \right) \frac{\vr'\ps{\av{\frac{y}{r}}^2}}{|y|^2} \, d\mu(y).
\end{align*}

\begin{sublemma} \label{lemma:SYM_lowbound1}
\begin{align}\label{e:B121-large-beta}
    \frac{1}{r^2}\int\left( \left< K(-y), \Omega(e_x)\right> \left< y, e_x \right> + \left< K(-y), \Omega(e_z) \right> \left<y, e_z\right> \right) \frac{\vr'\ps{\av{\frac{y}{r}}^2}}{| y|^2} \, d\mu(y) \gtrsim \frac{1}{r}.
\end{align}
\end{sublemma}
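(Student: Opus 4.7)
The plan proceeds in three main steps. First, I would unwind the inner bracket $\ip{K(y)/r, DK(y)\cdot v/r}$ for an arbitrary $v \in \C$. By Sublemma~\ref{sl:DK-expression},
\begin{align*}
DK(y)\cdot v = \ip{\hat y, v^\parallel}\Omega(\hat y) + \ip{v, \hat y_\perp}\, d_{\hat y}\Omega \cdot \hat y_\perp.
\end{align*}
Since $K(y) = |y|\Omega(\hat y)$ is parallel to $\Omega(\hat y)$, while $d_{\hat y}\Omega \cdot \hat y_\perp \in T_{\Omega(\hat y)}\bS$ is perpendicular to $\Omega(\hat y)$, only the radial piece survives and
\begin{align*}
\ip{K(y), DK(y)\cdot v} = |y| \ip{\hat y, v} = \ip{y, v}.
\end{align*}
Substituting this with $v = e_x$ and $v = e_z$ into the definition of $B_{1,2,1}$, and using $K(-y) = -K(y)$, reduces $\ip{B_{1,2,1}(e_x), \Omega(e_x)} + \ip{B_{1,2,1}(e_z), \Omega(e_z)}$ to the integral displayed in \eqref{e:B121-large-beta}.

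Second, I would combine the two summands into a single inner product. Since $\ck{e_x, e_z}$ is an orthonormal basis, $y = \ip{y, e_x} e_x + \ip{y, e_z} e_z$, and so the bracket in \eqref{e:B121-large-beta} equals $\ip{K(-y), \Lambda(y)}$, where $\Lambda \colon \C \to \C$ is the linear map with $\Lambda(e_x) = \Omega(e_x)$, $\Lambda(e_z) = \Omega(e_z)$. Writing $K(-y) = -|y|\Omega(\hat y)$ and $\Lambda(y) = |y| \Lambda(\hat y)$, the integrand of \eqref{e:B121-large-beta} takes the compact form
\begin{align*}
-\frac{\vr'\ps{\av{y/r}^2}}{r^2}\,\ip{\Omega(\hat y), \Lambda(\hat y)}.
\end{align*}

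The heart of the argument is then a uniform lower bound $\av{\ip{\Omega(\hat y), \Lambda(\hat y)}} \geq 1/2$ for every $\hat y \in \bS$. This is a perturbative statement: when $\Omega = \mathrm{id}$, $\Lambda$ is the identity and the inner product equals $1$ identically in $\hat y$. For general $\Omega$, I would parametrise $\hat y = \cos\theta\, e_x + \sin\theta\, e_z$, use Lemma~\ref{lemma:path-lift} to pass to the bi-Lipschitz map $\omega$, and Taylor-expand the resulting trigonometric sum in $\delta_\Omega$, exactly along the lines of Lemma~\ref{l:dot-prod-sign}. The unperturbed terms give $\cos^2\theta + \sin^2\theta = 1$; the first-order corrections in $\delta_\Omega$ cancel pairwise, and the remainder is bounded by $O(\delta_\Omega)$, so the smallness $\delta_\Omega \leq 1/20$ forces the uniform lower bound $\geq 1/2$. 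Since $|y| \sim r$ on $\spt(\vr'(\av{\cdot/r}^2))$ and Ahlfors regularity gives $\mu\ps{A\ps{0, r/\sqrt 2, r}} \sim r$, the integral is of order $1/r$ in magnitude, with the sign as claimed.

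The expected main obstacle is the perturbation estimate in the last step: ensuring that the quadratic form $\theta \mapsto \ip{\Omega(\hat y), \Lambda(\hat y)}$ stays bounded away from zero at every angle, rather than at generic ones. This is handled by the same mechanism isolated in Lemma~\ref{l:dot-prod-sign}: termwise Taylor expansion, the bound $\delta_\Omega \leq 1/20$, and elementary trigonometric inequalities such as $\sin(\alpha) \geq \alpha/2$ on $[0, \pi/2]$.
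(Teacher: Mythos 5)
Your argument is correct in substance, but it follows a genuinely different route from the paper's and so deserves a comparison. The paper proves the sublemma by transplanting the proof of Sublemma~\ref{sublemma:SYMg_lowA2}: it splits $\C$ into the sets $G$, $H$, $F$ of \eqref{e:GHF}, invokes Lemma~\ref{l:dot-prod-sign} on $G$ and Lemma~\ref{l:dot-abs} on $H$ and $F$, and checks that on each piece the sum of the radial and angular contributions stays bounded away from zero. Your route instead observes that, with $\Lambda$ the linear map determined by $\Lambda(e_x)=\Omega(e_x)$, $\Lambda(e_z)=\Omega(e_z)$, the whole bracket collapses to $\ip{K(-y),\Lambda(y)}=-|y|^2\ip{\Omega(\hat y),\Lambda(\hat y)}$, so the whole estimate reduces to a single pointwise inequality $\ip{\Omega(\hat y),\Lambda(\hat y)}\geq \tfrac12$ for every $\hat y$; this is a uniform perturbation bound around $\Omega=\mathrm{id}$, which you then verify by passing to the lift $\omega$ and writing $\cos^2\theta\cos\epsilon_1+\sin^2\theta\cos\epsilon_2-\sin\theta\cos\theta(\sin\epsilon_1+\sin\epsilon_2)$ with $|\epsilon_1|\le\delta_\Omega\theta$, $|\epsilon_2|\le\delta_\Omega(\tfrac\pi2-\theta)$, giving $\geq 1-\tfrac{\delta_\Omega\pi}{4}-\tfrac{\delta_\Omega^2\pi^2}{8}$. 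What your version buys is that the case split $G/H/F$ disappears: the positivity of the integrand becomes a one-line pointwise statement rather than an argument by exhaustion. That is a real simplification.

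Two small inaccuracies, neither fatal. First, the claim that the first-order corrections in $\delta_\Omega$ ``cancel pairwise'' is not right: the linear term is $-\sin\theta\cos\theta(\sin\epsilon_1+\sin\epsilon_2)$, and $\epsilon_1+\epsilon_2=\omega(\theta_x+\tfrac\pi2)-\omega(\theta_x)-\tfrac\pi2$ is a fixed quantity of size $O(\delta_\Omega)$ that does not vanish in general. The conclusion you need — perturbation $O(\delta_\Omega)$, hence $\geq\tfrac12$ for $\delta_\Omega\le 1/20$ — survives, but you should drop the cancellation claim and just bound the linear term directly. Second, ``the sign as claimed'': taking $\Omega=\mathrm{id}$ one finds the bracket equals $-|y|^2$, so the integral in \eqref{e:B121-large-beta} is in fact negative and the correct reading is $\av{\,\cdot\,}\gtrsim 1/r$. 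This sign slip is already present in the statement of the sublemma and is how the paper immediately uses it in \eqref{eq:SYM_lowboundT}, so it is inherited rather than introduced; still, don't assert that the sign comes out positive.
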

\begin{proof}
The proof of this is the same as in Sublemma \ref{sublemma:SYMg_lowA2}, i.e. it mainly follows from Lemma \ref{l:dot-prod-sign}. Let us give a couple of remarks. First, even if $\Omega(e_x) \neq \nu$ (where $\nu$ is the unit normal to $K(L_Q)$), the proof given for Sublemma \eqref{sublemma:SYMg_lowA2} works for $\Omega(e_x)$ as well. Indeed the reason why we used $\nu$ in Section \ref{s:small-beta} is to obtain a bound in terms of $\dist(y, L_Q)$ for the error term - in that case we needed $\nu$, and $\Omega(e_z)$ would not work. Second, note that with for $\vr$ as in \eqref{eq:varphi1} and \eqref{eq:varphi2}, the derivative  $\vr'$ will be non negative and supported on an annulus $A\ps{0, \frac{1}{2}, 2}$. Thus $\vr'$ effectively has all the property of $\phi$ (as in \eqref{eq:SYM_smoothcut}) that were used in the proof of Sublemma \ref{sublemma:SYMg_lowA2}. 
\end{proof}

\subsubsection{Bounds for $A_2$} Keeping the notation as above, let us prove the following. 
\begin{sublemma}
\begin{align} \label{e:A2-large-beta}
    \ip{A_2(e_x), \Omega(e_x)} + \ip{A_2(e_z), \Omega(e_z)} \gtrsim \frac{1}{r^2}.
\end{align}
\end{sublemma}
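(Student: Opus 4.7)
The argument mirrors the proof of Sublemma~\ref{sublemma:SYMg_lowA2}, adapted to the weight $|y|^{-2}$ in place of $r^{-2}$ and to the two pairs $(e_x,\Omega(e_x))$ and $(e_z,\Omega(e_z))$. First, applying Sublemma~\ref{sl:DK-expression} (and noting $DK(-y)=DK(y)$ since $\Omega$ is odd), we expand, for any unit vectors $v,w$,
\begin{align*}
    \ip{A_2(v),w} = \int\frac{\vr(|y/r|^2)}{|y|^2}\Bigl[\ip{\hat y,v}\ip{\Omega(y),w} + \ip{v,\hat y_\perp}\ip{d_{\hat y}\Omega\cdot\hat y_\perp,w}\Bigr]\,d\mu(y).
\end{align*}
Specializing to $(v,w)=(e_x,\Omega(e_x))$ and $(e_z,\Omega(e_z))$ and summing, the integrand becomes $I_1(\hat y)+I_2(\hat y)$, where $I_1$ collects the two \emph{radial} terms $\ip{\hat y,e_j}\ip{\Omega(y),\Omega(e_j)}$ for $j\in\{x,z\}$, and $I_2$ the two \emph{spherical} terms $\ip{e_j,\hat y_\perp}\ip{d_{\hat y}\Omega\cdot\hat y_\perp,\Omega(e_j)}$.

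Next, I would show $I_2 = I_1 + O(\delta_\Omega)$. Using \eqref{e:dot-prods-diff-nodiff} to replace $d_{\hat y}\Omega\cdot\hat y_\perp$ by the $90^\circ$ rotation $\Omega(y)_\perp$ (up to a multiplicative factor close to $1$), together with the identity $\ip{a_\perp,b}=-\ip{a,b_\perp}$ applied twice, the spherical term for index $j$ becomes $\ip{\hat y,(e_j)_\perp}\ip{\Omega(y),\Omega(e_j)_\perp}$. Since $(e_x)_\perp=e_z$ and $(e_z)_\perp=-e_x$, while the bi-Lipschitz condition with normalization $\omega(0)=0$ from Remark~\ref{r:omega(0)} yields $\Omega(e_x)_\perp=\Omega(e_z)+O(\delta_\Omega)$ and $\Omega(e_z)_\perp=-\Omega(e_x)+O(\delta_\Omega)$, the spherical sum matches the radial sum up to an $O(\delta_\Omega)$ error.

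Third, I would establish the pointwise lower bound $I_1(\hat y)\gtrsim 1$ uniformly in $\hat y\in\bS$ by the same sector decomposition used in Sublemma~\ref{sublemma:SYMg_lowA2}. Since $\{e_x,e_z\}$ is orthonormal, at least one of $|\ip{\hat y,e_x}|,|\ip{\hat y,e_z}|$ is $\geq 1/\sqrt{2}\geq 1/10$, so Lemma~\ref{l:dot-prod-sign} (with the sign convention of Remark~\ref{r:C+}) guarantees that the corresponding summand of $I_1$ is positive and $\gtrsim 1$; Lemma~\ref{l:dot-abs} bounds the other summand in absolute value by a small constant, so it cannot overturn the large one. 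Combining with the previous step, $I_1+I_2\gtrsim 1$ pointwise as long as $\delta_\Omega$ is sufficiently small, which is ensured by $\delta_\Omega\leq 1/20$ from Remark~\ref{r:Omega}.

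Finally, Ahlfors $1$-regularity of $\mu$ and the fact that $\vr(|y/r|^2)\gtrsim 1$ on $B(0,2r)\setminus B(0,r)$ yield
\begin{align*}
    \ip{A_2(e_x),\Omega(e_x)}+\ip{A_2(e_z),\Omega(e_z)}\gtrsim\int_{B(0,2r)\setminus B(0,r)}\frac{d\mu(y)}{|y|^2}\gtrsim\frac{\mu(B(0,2r))}{r^2}\gtrsim\frac{1}{r},
\end{align*}
which gives \eqref{e:A2-large-beta} (with room to spare). The main obstacle is quantifying the $O(\delta_\Omega)$ errors in the second step, since they come both from \eqref{e:dot-prods-diff-nodiff} and from the approximations $\Omega(e_j)_\perp\approx\pm\Omega(e_{j'})$; once these are made explicit, the smallness $\delta_\Omega\leq 1/20$ preserves the pointwise positivity of $I_1+I_2$ delivered by Lemmas~\ref{l:dot-prod-sign}--\ref{l:dot-abs}.
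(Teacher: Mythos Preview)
Your approach is correct and yields the desired bound, though it differs from the paper's in organization. The paper splits the integrand into the radial part $I$ and spherical part $II$ exactly as you do, but then treats them \emph{separately}: for $I$ it further splits into the annulus $A(0,r/2,2r)$ (handled as in Sublemma~\ref{lemma:SYM_lowbound1}) and the exterior $\C\setminus B(0,2r)$ (summed over dyadic annuli using the $G/H/F$ decomposition of \eqref{e:GHF}); for $II$ it uses the rotation identities you describe to transform the spherical terms into expressions of the form $\ip{\hat y,e_z}\ip{\Omega(y),\nu}$ and $\ip{\hat y_\perp,e_z}\ip{\Omega(y)_\perp,\Omega(e_z)}$, and then applies the $G/H/F$ decomposition again. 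Your observation that the spherical sum equals the radial sum up to $O(\delta_\Omega)$ (because $(e_x)_\perp=e_z$, $(e_z)_\perp=-e_x$, and $\Omega(e_j)_\perp=\pm\Omega(e_{j'})+O(\delta_\Omega)$) collapses the two pieces into one and is a genuine simplification; the paper does not make this reduction explicit. Once pointwise positivity of $I_1+I_2$ is established, integrating over a single annulus suffices, whereas the paper's dyadic summation is then unnecessary for a lower bound.

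One small gap in your step~3: the sentence ``Lemma~\ref{l:dot-abs} bounds the other summand in absolute value by a small constant'' only applies when the smaller of $|\ip{\hat y,e_x}|,|\ip{\hat y,e_z}|$ is below $1/10$. When both are $\geq 1/10$ (e.g.\ near the diagonals), Lemma~\ref{l:dot-abs} says nothing; instead Lemma~\ref{l:dot-prod-sign} (with Remark~\ref{r:C+}) shows that \emph{both} summands of $I_1$ are nonnegative, and the one with $|\ip{\hat y,e_j}|\geq 1/\sqrt2$ is $\gtrsim 1$ by the sharper inequality $\ip{\Omega(y),\Omega(e_j)}\geq\tfrac12\ip{\hat y,e_j}$ established in the proof of Lemma~\ref{l:dot-prod-sign}. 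With this two-case split your pointwise bound $I_1\gtrsim 1$ goes through cleanly. The reference to Remark~\ref{r:omega(0)} in step~2 is unnecessary: the approximation $\Omega(e_j)_\perp=\pm\Omega(e_{j'})+O(\delta_\Omega)$ follows from the bi-Lipschitz bound alone (together with the implicit assumption, shared by the paper in \eqref{e:dot-prods-diff-nodiff}, that $\Omega$ preserves orientation).
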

\begin{proof}
For a unit vector $e \in \mathbb{S}$, we once again use \eqref{e:DK-expression} and split the dot product as
\begin{align*}
    \left< DK(y) \cdot e, \Omega(e) \right> = \left< \hat y, e\right>\left<\Omega(y), \Omega(e)\right> + \left< e, \hat y_\perp\right> \left< d_{\hat y} \Omega \cdot \hat y_\perp, \Omega(e)\right> .
\end{align*}
We also split the left hand side of \eqref{e:A2-large-beta} accordingly:
\begin{align*}
    &\int \left(\left<DK(y) \cdot e_x, \Omega(e_x)\right> + \left<DK(y) \cdot e_z, \Omega(e_z) \right>\right)  \frac{\vr\ps{\av{\frac{y}{r}}^2}}{|y|^2} \, d\mu(y)\\
    & = \int \ps{ \left< \hat y, e_x \right> \left< \Omega(y), \Omega(e_x)\right> + \left< \hat y, e_z \right> \left< \Omega(y), \Omega(e_z)\right>} \, \frac{\vr\ps{\av{\frac{y}{r}}^2}}{|y|^2} \,d\mu(y) \\
    & \enskip \enskip  + \int \ps{ \left< e_x, \hat y_\perp\right> \left< d_{\hat y} \Omega \cdot \hat y_\perp, \Omega(e_x)\right> + \left< e_z, \hat y_\perp\right> \left< d_{\hat y} \Omega \cdot \hat y_\perp, \Omega(e_z)\right> } \frac{\vr\ps{\av{\frac{y}{r}}^2}}{|y|^2} \,d\mu(y)\\
    & =: I + II.
\end{align*}
Let us first look at $I$. Recall that $\vr\ps{\frac{|\cdot|^2}{r^2}}$ is supported on $\C \setminus B(0, r/2)$. We further split $I$:
\begin{align*}
    I = \int_{A(0, r/2, 2r)} + \int_{\C \setminus B(0, 2r)} =: I_1 + I_2. 
\end{align*}
Now, $I_1$ is basically the same integral as the one appearing on the left hand side of \eqref{e:B121-large-beta}, and thus
\begin{align}
    I_1 \gtrsim  r. 
\end{align}
Also for $I_2$ the proof is similar to that of Sublemma \ref{sublemma:SYMg_lowA2}: first, we write
\begin{align*}
    I_2= \int_{\ps{ \C \setminus B(0,2r)} \cap G} +  \int_{\ps{ \C \setminus B(0,2r) }\cap H} +\int_{\ps{ \C \setminus B(0,2r)} \cap F},
\end{align*}
where $G$, $F$ and $H$ are defined as in \eqref{e:GHF}.
Now, when we integrate over $(\C \setminus B(0, 2r))\cap G$
we use Sublemma \ref{lemma:SYM_lowbound1}; hence we obtain
\begin{align*}
    & \int_{\ps{ \C \setminus B(0,2r)} \cap G}\ps{ \left< \hat y, e_x \right> \left< \Omega(y), \Omega(e_x)\right> + \left< y, e_z \right> \left< K(y), \Omega(e_z)\right>} \, \frac{\vr\ps{\av{\frac{y}{r}}^2}}{|y|^2} \,d\mu(y) \\
    & \gtrsim  \int_{\ps{ \C \setminus B(0,2r)} \cap G} \frac{1}{|y|^2} \, d\mu(y) \\
    & =  \sum_{k=0}^\infty \int_{A(0, 2^{k+1}r, 2^{k+2}r )\cap G} \frac{1}{|y|^2} \, d\mu(y).
\end{align*}
Similarly, using \eqref{e:A2-2}, we obtain that 
\begin{align*}
    & \int_{\ps{ \C \setminus B(0,2r) }\cap H} \ps{ \left< \hat y, e_x \right> \left< \Omega(y), \Omega(e_x)\right> + \left< y, e_z \right> \left< K(y), \Omega(e_z)\right>} \, \frac{\vr\ps{\av{\frac{y}{r}}^2}}{|y|^2} \,d\mu(y) \\
    & \gtrsim \sum_{k=0}^\infty \int_{A(0, 2^{k+1}r, 2^{k+2}r) \cap H} \frac{1}{|y|^2} \, d\mu(y), 
\end{align*}
and we can say the same for $\int_{\ps{ \C \setminus B(0,2r)} \cap F}$. 
Then we see that, because $\mu$ is Ahlfors regular with constant $C_0$, 
\begin{align*}
    I_2 & \gtrsim  \sum_{k=0}^\infty \int_{A(0, 2^{k+1}r, 2^{k+2}r )} \frac{1}{|y|^2} \, d\mu(y) 
     \gtrsim_{C_0} \frac{1}{r} \sum_{k=0}^\infty \frac{1}{2^k} = C(C_0) \frac{1}{r},
\end{align*}
and therefore
\begin{align}\label{e:A2-large-beta-I}
    I \gtrsim_{C_0} \frac{1}{r}.
\end{align}
We now take care of $II$, that is, the the integral
\begin{align} \label{eq:SYM_remainingintegral}
    \int \ps{|y| \left<\hat y_\perp, e_x \right>\left< d_{\hat y} \Omega \cdot \hat y_\perp , \Omega(e_x) \right> +  \left<\hat y_\perp, e_z \right>\left< d_{\hat y} \Omega \cdot \hat y_\perp , \Omega(e_z) \right>} \frac{\vr\ps{\av{\frac{y}{r}}^2}}{|y|^2}.
\end{align}
Let us first look at the term
\begin{align*}
    \left<\hat y_\perp, e_x \right>\left< d_{\hat y}\Omega \cdot \hat y_\perp, \Omega(e_x)\right>.
\end{align*}
By symmetry, we can assume that $0 \leq \arg(y)- \arg(e_x) \leq \pi/2$, as in \eqref{eq:SYMg_symmetry}. Now, as observed before $\left<\hat y_\perp, e_x\right> = - \left< \hat y, e_z\right> $; on the other hand,
    $d_{\hat y} \Omega \cdot \hat y_\perp \in T_{\Omega(y)}\mathbb{S}$, and thus, as in \eqref{e:A2-1},
\begin{align*}
    \left< d_{\hat y} \Omega \cdot \hat y_\perp , \Omega(e_x) \right> \sim  \left< \Omega(y)_\perp, \Omega(e_x) \right>,
\end{align*}
where we denoted by $\Omega(y)_\perp$ the vector $\Omega(y)$ rotated by $90$ degrees counterclockwise. 
We have
\begin{align*}
    \left< \Omega(y)_\perp, \Omega(e_x) \right> = - \left< \Omega(y), \Omega(e_x)_\perp\right> = - \left<\Omega(y),\nu \right>.
\end{align*}
Therefore, we see that
\begin{align*}
    \left< \hat y_\perp, e_x \right> \left< d_{\hat y}\Omega \cdot \hat y_\perp, \Omega(e_x)\right> \sim_{\delta_\Omega} \left< \hat y, e_z \right> \left< \Omega(y), \nu\right>.
\end{align*}
Similarly, if we now look at the second term in the integral \eqref{eq:SYM_remainingintegral}, we have
\begin{align*}
    \left<\hat y_\perp, e_z \right>\left< d_{\hat y} \Omega \cdot \hat y_\perp , \Omega(e_z) \right>\sim_{\delta_\Omega}  \left< \hat y_\perp, e_z \right> \left< \Omega(y)_\perp, \Omega(e_z) \right>.
\end{align*}
Once again, we split the domain of integration into subsets $G$, $F$ and $H$, as defined in \eqref{e:GHF} and we argue as for $I$. Thus
\begin{align*}
    II \gtrsim \frac{1}{r}.
\end{align*}
This together with the estimate for $I$, \eqref{e:A2-large-beta-I}, proves the sublemma.
\end{proof}

We now have that 
\begin{align} \label{eq:SYM_lowboundT}
   |\ip{T(e_x), \Omega(e_x)} + \ip{T(e_z), \Omega(e_z)}| \gtrsim  r^{-1}. 
\end{align}

\subsection{Upper bounds on the error term $\mathbf{E}$}
\begin{lemma} 
Keep the notation as in the preceding subsection. Then
\begin{align}
    |E(x)| \lesssim \frac{|x|^2}{r^2} \sim \frac{\ell(Q)^2}{r^2}.
\end{align}
\end{lemma}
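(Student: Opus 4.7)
The plan is to repeat the six-term decomposition
\[
E(x) = A_3(x) + B_{1,1}(x) + B_{1,2,2}(x) + B_2(x) + B_3(x) + C(x)
\]
from Section~\ref{s:split} and to estimate each piece separately, exactly as in Lemma~\ref{l:error-x}, but with the weight $1/r^2$ there replaced by $1/|y|^2$, as indicated in the preamble to this section. Since we are no longer in the flat regime, we will not pair $E(x)$ with the normal $\nu$ and will not try to gain a factor of $\dist(y,L_Q)$; instead each integrand will be bounded by brute force in terms of its size alone.

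The inputs we need are the following. On $\mathrm{supp}\,\vr(|\cdot/r|^2)$ we have $|y|\gtrsim r$, while on $\mathrm{supp}\,\vr'$ and $\mathrm{supp}\,\vr''$ we further have $|y|\sim r$. The derivative bounds $|DK(y)|\lesssim 1$ and $|D^2K(y)|\lesssim 1/|y|$, together with the fact that $|x|\sim\ell(Q)\lesssim r/A$ (so in particular $|x|\leq |y|$ on the relevant support), give
\[
|[SO]|\lesssim \frac{|x|^2}{r^2}, \qquad |[DP]|\lesssim \frac{|x||y|}{r^2},
\]
exactly as in \eqref{eq:GK_estSO} and \eqref{eq:GK_estDP}; in particular $|[DP]|\lesssim |x|/r$ on the supports of $\vr'$ and $\vr''$. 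Ahlfors $1$-regularity will be used in the form $\mu(B(0,R))\sim R$.

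For $A_3$ the integrand will be bounded pointwise by $|x|^2/|y|^3$, so a dyadic decomposition of $\{|y|\gtrsim r\}$ into annuli, combined with Ahlfors regularity, yields a geometric sum $\lesssim |x|^2/r^2$. For $B_{1,1}$ the factor $\vr'$ localises the integration to $|y|\sim r$; combining $|K(-y)|/|y|^2\sim 1/|y|$ with $|[SO]|\lesssim |x|^2/r^2$ then gives $|B_{1,1}(x)|\lesssim |x|^2/r^2$ directly. The terms $B_{1,2,2}$, $B_2$ and $B_3$ will be handled in the same way, any extra factors of $|x|$ being absorbed by $|x|\lesssim r$. Finally, $C$ will be split as $C_1+C_2+C_3$ and treated in parallel to $B_{1,1}$, $B_2$, $B_3$ respectively, noting that on $\mathrm{supp}\,\vr''$ one has $|SO+DP|^2\lesssim |x|^2/r^2$. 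Summing the six contributions will produce $|E(x)|\lesssim |x|^2/r^2$.

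The only step where some care is needed is $A_3$: it is the unique piece carrying $\vr$ (rather than $\vr'$ or $\vr''$), and therefore the unique piece for which we are forced to integrate over all of $\{|y|\gtrsim r\}$ instead of over a single annulus. It will be essential here that $|D^2K(y)|\lesssim |y|^{-1}$, so that the integrand decays like $|y|^{-3}$; absolute integrability against the Ahlfors regular measure $\mu$ is then delivered by the geometric series $\sum_{k\geq 0} 2^{-2k}$. All the remaining pieces should be mechanical once the above size bounds are in place.
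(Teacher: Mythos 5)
Your proposal is correct and follows essentially the same path as the paper's proof: the same six-term decomposition of $E$, the same size bounds $|DK|\lesssim 1$ and $|D^2K(y)|\lesssim |y|^{-1}$, the same observation that $\vr'$ and $\vr''$ localise the integration to $|y|\sim r$ while $\vr$ does not, and the same use of Ahlfors regularity (your dyadic decomposition is precisely how one proves the paper's inequality $\int |y|^{-2}\vr(|y/r|^2)\,d\mu\lesssim 1/r$). If anything you are slightly more explicit than the paper, which condenses $B_2$, $B_3$ and $C$ into ``just as immediate''; your remarks that the extra factors of $|x|$ are absorbed by $|x|\lesssim r$ and that $|SO+DP|^2\lesssim |x|^2/r^2$ on $\mathrm{supp}\,\vr''$ (via $|DP|\lesssim |x||y|/r^2$) are exactly the computations the paper is suppressing. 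No gap.
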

\begin{proof}
The proof of this is similar to the one given for Lemma \ref{l:error-x} | actually it's easier: we do not need to bound in terms of the distance to $L_Q$, but only in terms of absolute values.
Let us give a sketch of the proofs. Recall from \eqref{eq:SYM_E} that 
\begin{align*}
    E(x) = A_3(x) + B_{1,1}(x) + B_{1,2,2}(x) + B_2(x) + B_3(x) + C(x).
\end{align*}
\begin{itemize}
    \item The term $A_3$ can be bounded as follows.
    \begin{align*}
        \av{ \int \frac{\frac{1}{2} x^T D^2K(\xi) x }{|x-y|^2} \, \vr\ps{\av{\frac{y}{r}}^2} \, d\mu(y) } & \lesssim \int \frac{|x|^2}{|y|^3}\,\vr\ps{\av{\frac{y}{r}}^2} \, d\mu(y)\\
        & \lesssim \frac{|x|^2}{r} \int \frac{1}{|y|^2} \vr\ps{\av{\frac{y}{r}}^2} \, d\mu(y).
    \end{align*}
    Note that 
    \begin{align*}
        \int \frac{1}{|y|^2} \vr\ps{\av{\frac{y}{r}}^2} \, d\mu(y) \lesssim \frac{1}{r},
    \end{align*}
    since $\mu$ is Ahlfors regulars. Thus we obtain that
    \begin{align*}
        |A_3(x)| \lesssim \frac{|x|^2}{r^2}.
    \end{align*}
    \item Let us look at $|B_{1,1}(x)|$. In this instance, we are integrating against $\vr'$, which is supported on the annulus $A(0, 1/2,2)$. Thus 
    \begin{align*}
        |B_{1,1}(x)| \lesssim \frac{1}{r^2} \int |K(y)| |SO|  \vr' \ps{\av{\frac{y}{r}}^2} \, d\mu(y).
    \end{align*}
    Recall from \eqref{eq:GK_estSO} that $|SO| \lesssim \frac{|x|^2}{r^2}$; this estimate is still valid here since $\spt(\vr')$ is basically the same as $\spt(\phi)$ (as in \eqref{eq:SYM_smoothcut}). It is immediate to see then that
    \begin{align*}
        |B_{1,1}(x)| \lesssim \frac{|x|^2}{r^2}. 
    \end{align*}
    \item The estimate for $|B_{1,2,2}(x)|$ is almost exactly the same. We just need to recall that $|D^2K(\xi_{0,x}-y)| \lesssim \frac{1}{|\xi_{0,x}-y|} \sim \frac{1}{|y|}\sim \frac{1}{r}$.
    \item The estimates for the remaining terms are just as immediate. 
\end{itemize}
\end{proof}

We are now ready to prove Lemma \ref{lemma:GK_distboundRiesz}.

\begin{proof}[Proof of Lemma \ref{lemma:GK_distboundRiesz}]
We see that
\begin{align}
    |\ip{T(e_x), \Omega(e_x)}| \leq |T(e_x)| \lesssim \frac{1}{\ell(Q)} |T(x)| \leq \frac{1}{\ell(Q)} |E(x)| \lesssim \frac{\ell(Q)}{r^2}.
\end{align}
On the other hand, we have
\begin{align}
    |T(e_z) \cdot \nu| \leq |T(e_z)| = \frac{1}{\dist(z, L_Q)} T(z-\tilde z) \lesssim \frac{1}{\dist(z, L_Q)} \frac{\ell(Q)^2}{r^2}.
\end{align}
Now, notice that $\dist(z, L_Q) \lesssim \ell(Q)$. Thus, also using \eqref{eq:SYM_lowboundT}, 
\begin{align*}
     \frac{1}{r} & \lesssim |\left<T(e_x), \Omega(e_x)\right> +\left< T(e_z),\Omega(e_z)\right>| 
     \leq |T(e_x)| + |T(e_z)| 
    \lesssim \frac{\ell(Q)}{r^2} + |T(e_z)| \\
    & \lesssim \frac{1}{\dist(z, L_Q)}\frac{\ell(Q)^2}{r^2}.
\end{align*}
Thus
\begin{align} \label{eq:SYM_distbound1}
    \dist(z, L_Q) \leq C  \frac{\ell(Q)^2}{r}.
\end{align}
\end{proof}

\section{The measure $\mu$ is flat}
In this section we will show that $\mu$ lies in a line, from which we derive that $\mu$ is, in fact, flat. Recall the notation \eqref{e:betaQ}.
\begin{lemma}
Let $Q \in \mucubes$ be a cube so that $ \ell(Q) \sim \frac{1}{A} r$, where $A>1$ is a (possibly large) constant. Then we have
\begin{align} \label{eq:SYM_betabound}
    \beta_2(Q)^2 \mu(Q) \lesssim \frac{\ell(Q)^2}{r^2 C(\tau )} \log(A) \sum_{Q \subset P \subset \widehat Q} \beta_2(P)^2 \mu(Q),
\end{align}
where $\widehat Q \in \mucubes$ is the unique cube such that $Q \subset \widehat Q$ and $\ell(\widehat Q) \sim A\ell(Q)$. 
\end{lemma}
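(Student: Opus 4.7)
The plan is to handle the inequality by separating the two cases isolated in the previous two sections and combining, in each, a pointwise control on $\dist(\,\cdot\,, L_Q)$ with the elementary inequality
\begin{align*}
\beta_2(Q)^2 \leq \frac{1}{\ell(Q)^3} \int_{B_Q} \dist(y, L_Q)^2 \, d\mu(y),
\end{align*}
obtained by taking $L_Q$ as a competitor in the infimum defining $\beta_2(Q)$. I write $x_0$ for the balanced point produced by Lemma \ref{lemma:GK_goodpoints} and set $N \sim \log A$.

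First suppose that (\ref{eq:SYM_betasmall}) holds. I start from the pointwise bound derived just before Lemma \ref{lemma:GK_squaredist}, namely $\dist(z, L_Q) \lesssim (\ell(Q)^2/r^3) \int_{B(x_0, r)} \dist(y, L_Q) \, d\mu(y)$ for $z \in 3Q$. Cauchy--Schwarz combined with the integral estimate $\int_{B(x_0, r)} \dist(y, L_Q)^2 \, d\mu \lesssim r^3 \log A \sum_{k=0}^N \beta_{\mu,2}(x_0, 2^k \ell(Q))^2$ produced inside the proof of Lemma \ref{lemma:GK_squaredist} controls the $L^1$ integral by $r^2 (\log A \sum_k \beta_{\mu,2}(x_0, 2^k\ell(Q))^2)^{1/2}$, which after squaring upgrades the pointwise control to
\begin{align*}
\dist(z, L_Q)^2 \lesssim \frac{\ell(Q)^4}{r^2} \log A \sum_{k=0}^N \beta_{\mu,2}(x_0, 2^k \ell(Q))^2.
\end{align*}
Substituting into the $\beta_2(Q)^2$ inequality above and using Ahlfors regularity $\mu(B_Q) \sim \ell(Q) \sim \mu(Q)$, one obtains
\begin{align*}
\beta_2(Q)^2 \mu(Q) \lesssim \frac{\ell(Q)^2}{r^2} \log A \, \mu(Q) \sum_{k=0}^N \beta_{\mu,2}(x_0, 2^k\ell(Q))^2.
\end{align*}

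Next suppose that (\ref{e:large-beta}) holds. Lemma \ref{lemma:GK_distboundRiesz} provides the pointwise bound $\dist(z, L_Q) \lesssim \ell(Q)^2/r$ on $3Q$, so the same integration step gives
\begin{align*}
\beta_2(Q)^2 \mu(Q) \lesssim \frac{\ell(Q)^2}{r^2} \mu(Q).
\end{align*}
Cauchy--Schwarz applied to the failure of (\ref{eq:SYM_betasmall}) yields $\sum_k \beta_{\mu,2}(x_0, 2^k\ell(Q))^2 \geq \tau^2/(N+1) \gtrsim \tau^2/\log A$, so the cost-free inequality $1 \lesssim \tau^{-2} \log A \sum_k \beta_{\mu,2}(x_0, 2^k\ell(Q))^2$ can be inserted on the right to reach the same conclusion as in the previous case with an extra factor $\tau^{-2}$.

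It remains to pass from ball-centered to cube-centered $\beta$ numbers. For each $0 \leq k \leq N$ let $P_k \in \mucubes$ be the unique ancestor of $Q$ with $\ell(P_k) \sim 2^k \ell(Q)$; since $B(x_0, 2^k \ell(Q)) \subset B_{P_k}$ and the two balls are of comparable radii, testing against a common competitor line gives $\beta_{\mu,2}(x_0, 2^k \ell(Q))^2 \lesssim \beta_2(P_k)^2$, and the cubes $P_k$ for $0 \leq k \leq N$ are exactly those with $Q \subset P_k \subset \widehat Q$. Setting $C(\tau) = \tau^2$ and combining the two cases yields the lemma. The decisive technical point is the sharpening of the pointwise bound in the small-$\beta$ regime: merely quoting Lemma \ref{lemma:GK_squaredist} as stated would miss the factor $\ell(Q)^2/r^2 \sim A^{-2}$, and it is precisely this factor that makes the summed version of the inequality contractive in the subsequent iteration leading to the flatness of $\mu$.
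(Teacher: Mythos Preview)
Your proof is correct and follows the same two-case strategy as the paper. In fact, your argument is more careful than the paper's own proof in one respect: in the small-$\beta$ case the paper simply writes ``follows from Lemma~\ref{lemma:GK_squaredist}'', but as you observe, the \emph{statement} of that lemma gives only $\dist(z,L_Q)^2 \lesssim \ell(Q)^2\log(A)\sum_k\beta_{\mu,2}(x_0,2^k\ell(Q))^2$, which upon integration over $B_Q$ loses the crucial $\ell(Q)^2/r^2\sim A^{-2}$ factor. Your route---returning to the pointwise bound $\dist(z,L_Q)\lesssim (\ell(Q)^2/r^3)\int_{B(x_0,r)}\dist(y,L_Q)\,d\mu$ from the lemma preceding Lemma~\ref{lemma:GK_squaredist}, then feeding in the $L^2$ integral estimate via Cauchy--Schwarz---recovers that factor and is presumably what the paper intended (its proof of Lemma~\ref{lemma:GK_squaredist} only establishes the integral bound, not the pointwise conclusion as stated). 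Your treatment of the large-$\beta$ case and the passage from ball-centered to cube-centered $\beta$ numbers match the paper exactly.
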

\begin{proof}
Suppose first that $x_0$ is so that $\sum_{k=0}^N \beta(x_0, \ell(Q)2^k) < \tau$ (where $N \sim \log(A)$). Then \eqref{eq:SYM_betabound} follows  from Lemma \ref{lemma:GK_squaredist} and the fact that 
\begin{align}
\sum_{k=0}^N \beta(x_0, \ell(Q)2^k)^2 \lesssim \sum_{Q \subset P \subset \widehat Q} \beta(P)^2
\end{align}

On the other hand, suppose that $x_0$ is so that $\sum_{k=0}^N \beta_2(x_0, \ell(Q)2^k) > \tau$. Then using Lemma \ref{lemma:GK_distboundRiesz},  we see that
\begin{align} 
    \frac{\dist(z, L_Q)^2}{\ell(Q)^2} \lesssim \frac{\ell(Q)^2}{r^2 \, \tau^2} \log(A) \sum_{Q \subset P \subset \widehat Q}\beta_2(P)^2.
\end{align}
This proves the lemma also in this case. 
\end{proof}

\begin{lemma}
For $A$ large enough, and $\delta_0, \tau$ chosen appropriately, the support of $\mu$ lies in a line $L$.
\end{lemma}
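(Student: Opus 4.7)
The plan is to combine the recursive $\beta$-estimate \eqref{eq:SYM_betabound} with a one-line bootstrap to conclude that $\beta_2(Q) = 0$ for every $Q \in \mucubes$, and then to assemble this local flatness into a single ambient line.

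First, I would simplify \eqref{eq:SYM_betabound} by canceling the common factor $\mu(Q) > 0$ on both sides (positive by Ahlfors regularity) and by estimating the sum on the right by the number of terms, which is $N \sim \log_2(A)$, times the supremum of $\beta_2(P)^2$. This yields, for every $Q \in \mucubes$,
\begin{align*}
\beta_2(Q)^2 \leq C\,\frac{\log^2(A)}{A^2\, C(\tau)}\, \sup_{P \in \mucubes} \beta_2(P)^2,
\end{align*}
with $C > 0$ an absolute constant independent of $Q$, $A$ and $\tau$.

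Next, I would observe that by Ahlfors $1$-regularity, $\beta_2(P) \lesssim 1$ uniformly in $P$: one simply plugs a line through $z_P$ into the infimum defining $\beta_2(B_P)$ and uses $\dist(y, L) \leq \diam(B_P)$ for $y \in B_P$. Hence $M := \sup_{P \in \mucubes} \beta_2(P)^2 < +\infty$. Taking the supremum over $Q$ in the displayed inequality gives
\begin{align*}
M \leq C\, \frac{\log^2(A)}{A^2\, C(\tau)}\, M.
\end{align*}
With $\tau$ fixed as in Section \ref{s:small-beta} so that $C(\tau) > 0$, and $\delta_\Omega \leq 1/20$ as in Remark \ref{r:Omega}, I would then take $A$ so large that $C\log^2(A)/(A^2 C(\tau)) < 1$, which is possible since $\log^2(A)/A^2 \to 0$. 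For such $A$ we conclude $M = 0$, so $\beta_2(Q) = 0$ for every $Q \in \mucubes$.

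Finally, $\beta_2(Q) = 0$ together with the closedness of $\spt(\mu)$ yields, for each $Q$, a line $P_Q$ with $\spt(\mu) \cap B_Q \subset P_Q$. To merge the $P_Q$'s into a single global line: given any three points $a, b, c \in \spt(\mu)$, choose $Q$ large enough that $\{a, b, c\} \subset B_Q$; then $a, b, c \in P_Q$ and are therefore collinear. Hence any three points of $\spt(\mu)$ are collinear, and so $\spt(\mu) \subset L$ for a single line $L$. The only real obstacle in executing the plan is bookkeeping: one must verify that the constant $C$ in \eqref{eq:SYM_betabound} does not secretly depend on $A$ and that $C(\tau)$ remains bounded below once $\tau$ has been fixed small in Section \ref{s:small-beta}; both are transparent from the derivations, so the bootstrap itself is elementary.
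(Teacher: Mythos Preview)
Your argument is correct and is genuinely simpler than the paper's. Both start from \eqref{eq:SYM_betabound}, but you take a supremum bootstrap while the paper runs a weighted summation. Concretely: after cancelling $\mu(Q)$ and using $r\sim A\ell(Q)$, the paper divides by $\ell(Q)^{1+\gamma}$, sums over all $Q\subset S$, interchanges the order of summation (Fubini over the chain $Q\subset P\subset\widehat Q$), separates out the boundary cubes with $\widehat Q\supset S$, and obtains an inequality of the form
\[
\Big(1-C\,\tfrac{\log^2 A}{A}\Big)\sum_{Q\subset S}\beta_2(Q)^2\ell(Q)^{-\gamma}\ \lesssim\ \tfrac{A\log^2 A}{\ell(S)^\gamma},
\]
and then lets $\ell(S)\to\infty$. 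Your route bypasses all of this by bounding $\sum_{Q\subset P\subset\widehat Q}\beta_2(P)^2\le N\cdot M$ with $N\sim\log A$ and $M=\sup_P\beta_2(P)^2<\infty$, which gives $M\le C\,\tfrac{\log^2 A}{A^2\,C(\tau)}\,M$ and hence $M=0$ directly. What your argument buys is brevity and transparency: no weights, no boundary terms, no limit in $S$. What the paper's argument buys is robustness in settings where the implicit constant or the chain length might grow with scale, so that a uniform supremum bound is unavailable; here that extra generality is not needed. Your verification that the implied constant in \eqref{eq:SYM_betabound} is independent of $A$ and that $C(\tau)$ stays bounded below once $\tau$ is fixed is exactly the bookkeeping required, and it holds as you say. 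The concluding collinearity argument (any three points of $\spt(\mu)$ lie in some $B_Q$, hence on $P_Q$) is clean and matches the intended conclusion.
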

\begin{proof}
We see that \eqref{eq:SYM_betabound} implies that for some cube $S \in \mucubes$, and $1>\gamma >0$,
\begin{align} \label{e:flat1}
    \sum_{Q \subset S} \frac{\beta_2(Q)^2 \mu(Q)}{\ell(Q)^{1+\gamma}} \leq C \frac{\log(A)}{A^2 } \sum_{Q \subset S} \sum_{Q \subset P \subset \widehat Q} \beta_2(P)^2 \frac{\mu(Q)}{\ell(Q)^{1+\gamma}}.
\end{align}
Let $N_A \in \N$ be so that $A \sim 2^{N_A}$ and pick $\gamma<1$ sufficiently small.  
Using the Ahlfors regularity of $\mu$, we see that
\begin{align} \label{e:flat2}
    \sum_{\substack{Q \subset S\\ \widehat Q \subset S}} \sum_{Q \subset P \subset \widehat Q}  \beta_2(P)^2 \frac{\mu(Q)}{\ell(Q)^{1+\gamma}} & = \sum_{P \subset S} \beta_2(P)^2 \sum_{\substack{Q \subset P\\\hat Q \subset S\\ |\log(\ell(Q)) - \log(\ell(P))| \leq  2 A}} \frac{\mu(Q)}{\ell(Q)^{1+\gamma}} \nonumber \\
    & \lesssim_{C_0} \sum_{P \subset S} \beta_2(P)^2 \sum_{k=0}^{N_A} \sum_{\substack{Q \subset P \\ \hat Q \subset S \\ \ell(Q) \sim 2^{-k} \ell(P)}} \frac{1}{\ell(Q)^\gamma}\nonumber\\
    & \lesssim_{C_0} \sum_{P \subset S} \beta_2(P)^2 \frac{1}{\ell(P)^\gamma}\sum_{k=0}^{N_A} 2^{k(1+\gamma)}\nonumber\\
    & \lesssim_{C_0} A \log(A) \sum_{P \subset S} \beta_2(P)^2 \frac{1}{\ell(P)^\gamma}  \nonumber \\
   &  \sim_{C_0} A \log(A) \sum_{P \subset S} \beta_2(P)^2 \frac{\mu(P)}{\ell(P)^{1+\gamma}} .
\end{align}
On the other hand, using the trivial bound $\beta_2(P)^2 \lesssim 1$, we have that
\begin{align} \label{e:flat3}
    \sum_{\substack{Q \subset S\\ \widehat Q \supset S}} \beta_2(P)^2 \ell(Q)^{-\gamma} & \lesssim \sum_{\substack{Q \subset S\\ \widehat Q \supset S}} \frac{1}{\ell(Q)^\gamma} \sum_{Q \subset P \subset \widehat Q} 1 \nonumber \\
    & \lesssim \log(A) \sum_{\substack{Q \subset S\\ \widehat Q \supset S}} \frac{1}{\ell(Q)^\gamma} \nonumber \\
    & \sim \log(A) \sum_{k=0}^{N_A} \sum_{\substack{Q \subset S\nonumber \\ \ell(Q) \sim 2^{-k}\ell(S)}} \frac{1}{\ell(Q)^\gamma}\nonumber \\
    & \lesssim \log(A) \frac{1}{\ell(S)^\gamma}\sum_{k=0}^{N_A} 2^{k(1+\gamma)} \nonumber\\
    & \lesssim \frac{\log(A)^2 A}{\ell(S)^\gamma}.
\end{align}
Hence, writing out the sum on the right hand side of \eqref{e:flat1} as sum of the two sums in \eqref{e:flat2} and \eqref{e:flat3}, we obtain
\begin{align}
    \ps{1 - C\frac{\log(A)^2}{A }} \sum_{Q \subset S} \frac{\beta_2(Q)^2 \mu(Q)}{\ell(Q)^{1+\gamma}} \leq C \frac{A \log(A)^2}{\ell(S)^\gamma}. 
\end{align}
An appropriate choice of $A>1$ gives 
\begin{align*}
    \sum_{Q \subset S} \beta_2(Q)^2 \ell(Q)^{-\gamma} \lesssim_A \frac{1}{\ell(S)^\gamma}.
\end{align*}
Sending $\ell(S) \to \infty$ gives the result.
\end{proof}

A lemma similar to the one below can be found in \cite{huovinen} and in \cite{mat95}. We add a proof for completeness.
\begin{lemma}
Let $\mu$ be an Ahlfors 1-regular, $\Omega$-symmetric measure whose support is contained in a line $L$. Then 
\begin{align}\label{e:line-support}
    \spt(\mu) = L.
\end{align}
\end{lemma}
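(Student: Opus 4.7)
The plan is to argue by contradiction. Suppose $\spt(\mu) \subsetneq L$. Since $\spt(\mu)$ is closed in $L$ and non-empty (by Ahlfors regularity), $L \setminus \spt(\mu)$ contains a connected component with at least one finite endpoint. After translating this endpoint to the origin and choosing a unit direction $e_L$ of $L$ so that the adjacent gap sits on the positive side, I may assume $0 \in \spt(\mu)$ and $(0, \delta)\, e_L \cap \spt(\mu) = \emptyset$ for some $\delta > 0$. Let $\mu_L$ denote the push-forward of $\mu$ to $\R$ under $t \mapsto t\,e_L$; it inherits Ahlfors $1$-regularity.

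The key step is to use the oddness of $\Omega$ to collapse the $\Omega$-symmetry at $0$ into a scalar first-moment identity on $\R$. For $y = t\,e_L$ with $t \neq 0$, we have $(0-y)/|0-y| = -\mathrm{sign}(t)\,e_L$, and the oddness of $\Omega$ gives $K(0-y) = |t|\,\Omega(-\mathrm{sign}(t)\,e_L) = -t\,\Omega(e_L)$. Since $\spt(\mu) \subset L$, the $\Omega$-symmetry $C_{\Omega,\mu}(0, r) = 0$ therefore becomes
\begin{align*}
    -\Omega(e_L) \int_{-r}^{r} t\, d\mu_L(t) = 0 \qquad \text{for all } r > 0,
\end{align*}
and since $|\Omega(e_L)| = 1$, this reduces to $\int_{-r}^{r} t\, d\mu_L(t) = 0$ for every $r > 0$.

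The contradiction then comes from specializing to $0 < r < \delta$. The gap forces $\spt(\mu_L) \cap (-r, r) \subset [-r, 0]$, so the integral equals $\int_{[-r, 0]} t\, d\mu_L(t)$. Ahlfors regularity of $\mu_L$ provides $\mu_L([-r,0]) \geq C_0^{-1} r > 0$ together with absence of atoms, so there exists $t_0 \in (0, r)$ with $\mu_L([-r, -t_0]) > 0$, whence
\begin{align*}
    \int_{[-r, 0]} t\, d\mu_L(t) \;\leq\; -t_0\,\mu_L([-r, -t_0]) \;<\; 0,
\end{align*}
contradicting the identity above. I do not expect any genuine obstacle in carrying this out; the only mildly delicate point is the preliminary topological step of producing a boundary point of $\spt(\mu)$ with a one-sided gap along $L$, which is elementary given that $\spt(\mu)$ is a non-empty proper closed subset of a line.
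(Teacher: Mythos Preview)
Your proof is correct and follows essentially the same approach as the paper's: both locate a boundary point of $\spt(\mu)$ on $L$ with a one-sided gap, then observe that the $\Omega$-symmetry integral at that point (for a radius smaller than the gap) cannot vanish because all the mass lies on one side and $\Omega$ is odd. Your version is more explicit in reducing to the scalar first-moment identity $\int_{-r}^{r} t\,d\mu_L(t)=0$ and in invoking Ahlfors regularity to rule out atoms and guarantee positive mass, whereas the paper leaves these steps implicit.
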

\begin{proof}
Suppose that $\spt(\mu) \neq L$. Since $\mu$ and $\hm{1}|_{L}$ are mutually absolutely continuous, there exists a segment $L' \subset L$ so that $L' \cap \spt(\mu) = \emptyset$ and we can also choose it so that $\dist(L', \spt(\mu)) = 0$. Let $a \in \spt(\mu)$ be the closest point to $L'$, and set $r:= \hm{1}(L') / 2$. Then it is clear that
\begin{align*}
    \int_{B(a, r)} K(a-y) \, d\mu(y) = \int_{B(a,r)} |a-y|\, \Omega(a-y)\, d\mu(y) \neq 0.
\end{align*}
This contradicts the $\Omega$-symmetricity of $\mu$. 
\end{proof}

Again, the next Lemma can be found in \cite{mat95}. We add the proof for completeness. 
\begin{lemma}
Let $\mu$ be an $\Omega$-symmetric, Ahlfors 1-regular measure such that $\spt(\mu)= L$, for some line in $\C$. Then there exists a constant $c>0$ such that   
\begin{align}
    \mu= c \hm{1}|_{L}.
\end{align}
\end{lemma}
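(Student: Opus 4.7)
The plan is to reduce the problem to a classical fact about $1$-dimensional measures whose first moment vanishes on every centred interval, in a manner analogous to Mattila's original argument for the Cauchy kernel in \cite{mat95}. First I would use Ahlfors regularity to represent $\mu$ as a density against $\hm{1}|_L$. Since $\mu$ is Ahlfors $1$-regular with $\spt(\mu)=L$ and $\hm{1}|_L$ is also Ahlfors $1$-regular, the two measures are mutually absolutely continuous: any set $E\subset L$ with $\hm{1}|_L(E)=0$ can be covered by arbitrarily short segments, so has $\mu$-measure zero by \eqref{e:ADR}, and the reverse implication is analogous. By the Radon--Nikodym theorem one may write $\mu=f\,\hm{1}|_L$ for some non-negative Borel function $f$, and the Lebesgue differentiation theorem together with \eqref{e:ADR} yields constants $0<c_1\leq c_2<\infty$ with $c_1\leq f\leq c_2$ almost everywhere.

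Next I would translate the $\Omega$-symmetry condition into a condition on $f$ by parametrising $L$ by arclength. Fix a unit vector $e\in\bS$ along $L$ and a point $p_0\in L$, write $L=\{p_0+te:t\in\R\}$, and view $f$ as a function of $t\in\R$. For $x=p_0+se$ and $y=p_0+te$ one has $x-y=(s-t)e$, hence $(x-y)/|x-y|=\mathrm{sign}(s-t)\,e$. Since $\Omega$ is odd and $\Omega(e)\neq 0$, the $\Omega$-symmetry hypothesis reduces to
\begin{align*}
    \int_{s-r}^{s+r}(s-t)\,f(t)\,dt = 0 \qquad\text{for every }s\in\R,\ r>0,
\end{align*}
and, after the change of variable $u=t-s$, to
\begin{align*}
    \int_{-r}^{r} u\,f(s+u)\,du = 0 \qquad\text{for every }s\in\R,\ r>0.
\end{align*}

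Finally I would extract the constancy of $f$ by differentiating in $r$. Because $f$ is locally bounded, for each fixed $s$ the map $r\mapsto\int_{-r}^r u\,f(s+u)\,du$ is absolutely continuous on $(0,\infty)$ with derivative $r\bigl[f(s+r)-f(s-r)\bigr]$ at almost every $r>0$; hence $f(s+r)=f(s-r)$ for a.e.\ $r>0$. By Fubini the set $\{(s,r)\in\R\times\R_{>0}: f(s+r)\neq f(s-r)\}$ has two-dimensional Lebesgue measure zero; the non-degenerate linear change of variables $u=s+r$, $v=s-r$ then transports this to the null set $\{(u,v)\in\R^2:\,u>v,\,f(u)\neq f(v)\}$. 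A further Fubini argument (picking $v$ in a set of full measure and letting it tend to $-\infty$) shows that $f$ is equal almost everywhere to a single constant $c\in[c_1,c_2]$, whence $\mu=c\,\hm{1}|_L$. The most delicate point is this last a.e.\ manipulation: the initial identity $f(s+r)=f(s-r)$ holds only for a set of $r$'s depending on $s$, so some care is needed to promote it to the constancy of $f$; however, the uniform upper and lower bounds on $f$ make every step routine.
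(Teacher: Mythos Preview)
Your proof is correct, but it takes a genuinely different route from the paper's.

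The paper does not introduce a Radon--Nikodym density at all. Instead, it invokes Lemma~\ref{lemma:SYM_borelequiv} with the Borel function $\phi(t)=t^{-1}\chara_{[R-\epsilon,R+\epsilon]}(t)$, which reduces the symmetry condition at a point $x_0\in L$ to
\[
0=\int_{A(x_0,R-\epsilon,R+\epsilon)}\Omega\!\left(\frac{x_0-y}{|x_0-y|}\right)d\mu(y).
\]
Because the integrand is the constant $\Omega((x_0-x_1)/|x_0-x_1|)$ on one component of $A(x_0,R-\epsilon,R+\epsilon)\cap L$ and its negative on the other (by oddness), this immediately yields $\mu(B(x_1,\epsilon))=\mu(B(x_2,\epsilon))$ for the two points $x_1,x_2$ at distance $R$ from $x_0$. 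Varying $x_0,R,\epsilon$ gives translation invariance of $\mu$ along $L$ and hence $\mu=c\,\hm{1}|_L$.

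In effect the paper ``differentiates'' the symmetry condition at the level of the measure by testing against annuli, while you first pass to the density $f$ and then differentiate the resulting integral identity in $r$. The paper's route is shorter and avoids the Fubini manoeuvre at the end (which, as you note, needs some care because the exceptional set of $r$'s is $s$-dependent). Your route, on the other hand, uses only the raw symmetry condition and does not appeal to Lemma~\ref{lemma:SYM_borelequiv}; it is closer in spirit to the classical one-dimensional moment argument. One small remark: you only need $\mu\ll\hm{1}|_L$, which you establish via the upper regularity; the reverse absolute continuity is not used and its justification is less immediate than ``analogous''.
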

\begin{proof}
Let $x_0 \in L$ and consider an annulus $A(x_0, R-\epsilon, R+\epsilon)$. Let $x_1$ and $x_2$ be the points where $L$ intersects $\partial B(x_0, R)$. Then, using Lemma \ref{lemma:SYM_borelequiv}, we see that 
\begin{align*}
    0 & = \int_{A(x_0, R-\epsilon, R+ \epsilon) } \frac{K(x_0- y)}{|x_0-y|} \, d\mu(y)\\
    & = \int_{B(x_1, \epsilon)} \frac{K(x_0- y)}{|x_0-y|} \, d\mu(y) +\int_{B(x_2, \epsilon)} \frac{K(x_0- y)}{|x_0-y|} \, d\mu(y). 
\end{align*}
Note that for any $y \in B(x_1, \epsilon) \cap \spt(\mu) = B(x_1, \epsilon) \cap L$, we have that $K(x_0-y)/|x_0-y| = \Omega(x_0-y) = \Omega(x_1-x_0)$, and similarly for $x_2$; since $\Omega$ is odd, we have $\Omega(x_0-x_2) = - \Omega(x_0-x_1)$. Hence we have that 
\begin{align*}
    0 = \Omega(x_0- x_1)\ps{ \mu(B(x_1, \epsilon))- \mu(B(x_2, \epsilon))}.
\end{align*}
Since $x_0$ and $R>0$ where arbitrary, we see $\mu$ gives the same measure to any ball of the same radius, thus is translation invariant, hence it must be a constant multiple of $\hm{1}|_{L}$. 
\end{proof}

\section{A simple application to singular integrals}
In this section we apply Theorem \ref{theorem:main}, to prove the following, and hence to prove Corollary \ref{cor:singular-integrals}.

\begin{theorem} \label{t:singular-integrals}
Let $K$ be as in the statement of Theorem \ref{theorem:main} and let $\mu$ be a Radon measure in the plane such that for $\mu$-almost all $x \in \C$ the following conditions hold. 
\begin{itemize}
    \item The lower density is positive and the upper density is finite, that is 
\begin{align}
     & \theta^{1,*}(\mu,x) := \limsup_{r \to 0} \frac{\mu(B(x,r)}{r} < + \infty; \label{e:up-density}\\
     & \theta_{*}^1(\mu, x) := \liminf_{r \to 0} \frac{\mu(B(x,r))}{r} >0. \label{e:low-density}
\end{align}
\item The principal value 
\begin{align} \label{e:pv}
    \lim_{\epsilon \downarrow 0} \int_{\C \setminus B(x,\epsilon)} \frac{K(x-y)}{|x-y|^2} \, d\mu(y) 
\end{align}
exists and is finite.
Then $\mu$ is $1$-rectifiable.
\end{itemize}
\end{theorem}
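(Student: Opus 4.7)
The plan is to extract the structure of $\mu$ via tangent measures and then reduce to Theorem \ref{theorem:main}. For each integer $M \geq 1$, let $F_M \subset \C$ be the set of points $x$ satisfying $1/M \leq \theta_*^1(\mu,x) \leq \theta^{1,*}(\mu,x) \leq M$ and for which the principal value in \eqref{e:pv} exists with modulus at most $M$. By hypothesis, $\bigcup_M F_M$ has full $\mu$-measure, so it suffices to prove $\mu_M := \mu|_{F_M}$ is $1$-rectifiable for each $M$. A standard density/differentiation argument shows that for $\mu$-a.e.\ $x \in F_M$ the tangent measures of $\mu$ at $x$ coincide with those of $\mu_M$ at $x$, so from now on I work with $\mu_M$.

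Fix such an $x$ and set $\mu_{x,t}(A) := t^{-1}\mu_M(x+tA)$. The density bounds force $\mu_{x,t}(B(0,R)) \in [R/M,MR]$ for each $R>0$ and small $t$, so $\{\mu_{x,t}\}$ is weak-$*$ precompact. Let $\nu$ be any subsequential weak-$*$ limit along $t_j \downarrow 0$. Using the Preiss principle that tangent measures of tangent measures of $\mu$ are tangent measures of $\mu$ at $x$, together with positivity of the lower density, one obtains that $\nu$ is Ahlfors $1$-regular with constant depending only on $M$. To produce symmetry at the origin, I exploit that $K(x-z)/|x-z|^2$ is $(-1)$-homogeneous: the change of variables $y=(z-x)/t$ gives
\begin{align*}
    \int_{\rho < |y| < R} \frac{K(-y)}{|y|^2}\, d\mu_{x,t}(y) = \int_{t\rho < |x-z| < tR} \frac{K(x-z)}{|x-z|^2}\, d\mu_M(z),
\end{align*}
and the right-hand side tends to $0$ as $t \downarrow 0$ by the Cauchy criterion for the existence of the principal value at $x$. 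Passing to the weak-$*$ limit against smooth annular cutoffs of the form \eqref{eq:SYM_smoothcut} and invoking Lemma \ref{lemma:SYM_borelequiv} yields $C_{\Omega,\nu}(0,r) = 0$ for every $r>0$.

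To promote this symmetry at $0$ to full $\Omega$-symmetry of $\nu$ on $\spt(\nu)$, I would repeat the argument at a generic $y \in \spt(\nu)$: by the Preiss iteration principle above, tangent measures of $\nu$ at $y$ are again tangent measures of $\mu$ at $x$, hence are symmetric at their own origin; a weak-$*$ continuity argument then upgrades this to $C_{\Omega,\nu}(y,r) = 0$ for all $r>0$ and all $y \in \spt(\nu)$. At this point $\nu$ satisfies the hypotheses of Theorem \ref{theorem:main}, so $\nu = c_\nu \hm{1}|_{L_\nu}$ for some line $L_\nu$ and constant $c_\nu > 0$. Hence every tangent measure of $\mu_M$ at $\mu_M$-a.e.\ point is flat, and the classical Marstrand--Mattila flat-tangent criterion yields $1$-rectifiability of $\mu_M$, and consequently of $\mu$. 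The main obstacle here is the propagation step: raw rescaling only delivers symmetry at the origin of $\nu$, and one genuinely needs the lower density assumption in order to ensure that tangent measures are Ahlfors regular at every point of their support, which is precisely what powers the Preiss iteration required to propagate symmetry from the origin to the rest of $\spt(\nu)$.
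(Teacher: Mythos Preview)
Your overall architecture matches the paper's: pass to tangent measures, show they are Ahlfors $1$-regular and $\Omega$-symmetric, apply Theorem \ref{theorem:main} to conclude flatness, and finish with a Preiss-type criterion. The symmetry-at-the-origin computation via $(-1)$-homogeneity and the Cauchy criterion is exactly what the paper does in Sublemma \ref{sl:tan-0-sym}.

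The genuine gap is in your propagation step. You invoke the principle that $\Tan(\nu,y)\subset\Tan(\mu,x)$, conclude that every element of $\Tan(\nu,y)$ is symmetric at its origin, and then assert that ``a weak-$*$ continuity argument upgrades this to $C_{\Omega,\nu}(y,r)=0$ for all $r>0$.'' That last sentence does not follow: knowing the blow-ups of $\nu$ at $y$ are symmetric at $0$ is only infinitesimal information and says nothing about $\int_{B(y,r)}K(y-\cdot)\,d\nu$ at a fixed positive scale $r$. No weak-$*$ limiting will recover a finite-scale identity from information that only holds in the limit $r\to 0$.

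The paper closes this gap with a different (and stronger) structural fact about tangent cones, namely Lemma \ref{l:translation}: if $\nu\in\Tan(\mu,x)$ and $y\in\spt(\nu)$, then the \emph{translate} $T_{y,1}[\nu]$ itself lies in $\Tan(\mu,x)$, not merely its further blow-ups. Since Sublemma \ref{sl:tan-0-sym} gives symmetry at $0$ for every element of $\Tan(\mu,x)$, one gets $C_{\Omega,T_{y,1}[\nu]}(0,r)=0$ for all $r>0$, which is exactly $C_{\Omega,\nu}(y,r)=0$. This is a one-line deduction once you use the translation property instead of the iterated-tangent property. Replace your propagation paragraph with this argument and the proof goes through; the decomposition into the sets $F_M$ and the restriction $\mu_M$ are harmless (the paper instead uses Egorov to the same effect), though you should note that the principal value for $\mu_M$ at density points of $F_M$ follows from that for $\mu$ because $\mu\llcorner F_M^c$ has zero density there.
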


To prove this we will follow Mattila (see \cite{mat95}) in using Preiss' Theorem, which states that if a measure $\mu$ satisfies certain density conditions, and moreover its \textit{tangent measures} are all flat, then $\mu$ is rectifiable. Before stating this precisely, let us remind the reader what tangent measures are.

For a measure $\mu$ on $\C$, and a map $F: \C \to \C$, we denote the push forward measure under $F$ as $F[\mu]$.
\begin{definition}
A locally finite Borel measure $\nu$ is called a tangent measure of $\mu$ at $z_0 \in \C$, if it is a weak limit of some sequence $c_i T_{z_0, r_i} [\mu]$, where
\begin{align*}
    & 0 < c_i < \infty, \, r_i \downarrow 0 \mbox{ and } \\
    & T_{z_0, r_i}(A) = \mu(r_i A + z_0).
\end{align*}
\end{definition}
We will denote the set of tangent measures of $\mu$ at $z_0$ by $\Tan(\mu, z_0)$. We also put $\Tan(\mu) := \cup_{z_0 \in \spt(\mu)}\Tan(\mu, z_0)$.

\begin{remark}
Tangent measures were originally introduced by Preiss in \cite{preiss}; in this work, Preiss completed a theory started from Besicovitch in the 1920's which had as fundamental question the following: what is the relation  between limits of density ratios (such as \eqref{e:low-density} and \eqref{e:up-density}) and rectifiability? We refer the interested reader to the very readable monograph \cite{delellis} of De Lellis on Preiss' result. 
\end{remark}

\begin{theorem} 
[\cite{preiss}, Theorem ]\label{t:preiss}
Let $\mu$ be a locally finite Borel measure such that, for $\mu$-almost all $z_0 \in \C$, it holds that
\begin{align*}
    & 0< \theta_*^1(\mu, z_0) <+ \infty,
\end{align*}
and for all $\nu \in \Tan(\mu, z_0)$, $\nu$ is flat, or, in other words, 
\begin{align*}
    \nu = c \mathcal{H}^1|_{L}, \mbox{ where } L \mbox{ is a line.}
\end{align*}
Then $\mu$ is $1$-rectifiable. 
\end{theorem}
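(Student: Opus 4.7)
The plan is to deduce $1$-rectifiability from the existence of an approximate tangent line $\mu$-almost everywhere, since a classical criterion of Federer then upgrades this (together with the density bounds) to rectifiability. The task therefore reduces to producing a well-defined approximate tangent line at $\mu$-a.e.\ point of $\spt(\mu)$.

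First I would localize. By Besicovitch differentiation and an exhaustion argument, $\mu$ can be decomposed into countably many pieces on each of which the lower density is uniformly bounded below by some $\delta>0$ and the upper density uniformly bounded above on a compact set $K$; a countable union of $1$-rectifiable measures is $1$-rectifiable, so it suffices to treat one such piece. For $\mu$-a.e.\ $x\in K$, the density bounds ensure that the rescalings $c_r\,T_{x,r}[\mu]$ with $c_r = r/\mu(B(x,r))$ form a precompact family in the weak topology on locally finite Radon measures, so $\Tan(\mu,x)$ is nonempty; by hypothesis each element is of the form $c\,\hm{1}|_L$ for some line $L$ through the origin.

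The critical step, and what I expect to be the main obstacle, is to show that the line $L$ does \emph{not} depend on the chosen tangent measure — in other words, that the ``direction'' of $\Tan(\mu,x)$ is unique at $\mu$-a.e.\ $x$. A priori, $\Tan(\mu,x)$ could contain $c\,\hm{1}|_L$ and $c'\,\hm{1}|_{L'}$ with $L\neq L'$; the plan for ruling this out is to exploit the fact that $\Tan(\mu,x)$ is closed under rescaling and, at $\mu$-typical points, stable under ``tangents of tangents''. If two distinct directions appeared along subsequences of scales $r_k\downarrow 0$ and $r'_k\downarrow 0$ respectively, one could interpolate a sequence of intermediate scales whose rescalings weakly cluster at a measure carrying nontrivial mass near both $L$ and $L'$; such a cluster point cannot be flat, contradicting the assumption that every tangent measure is flat. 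This is the subtle geometric-measure-theoretic content of Preiss' original argument, and it is where the real work lies.

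Finally, once the tangent direction $L_x$ is uniquely defined $\mu$-a.e., the flatness of tangent measures translates into the existence of an approximate tangent line to $\mu$ at $x$ in the sense of Federer. Combining this with the assumptions $0<\theta_*^1(\mu,x)$ and $\theta^{1,*}(\mu,x)<\infty$, Federer's classical rectifiability criterion (see for instance Mattila, \emph{Geometry of Sets and Measures in Euclidean Spaces}, Chapter~15) yields that $\mu$ is $1$-rectifiable. The localization and Federer steps are standard; the uniqueness of the tangent direction is the nontrivial core, and is where I would expect to borrow most heavily from Preiss' machinery on the structure of the space of tangent measures.
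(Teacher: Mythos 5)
The theorem you are asked to prove is not proved in the paper at all: it is cited from Preiss \cite{preiss}, with the remark that Mattila \cite{mat95}, Theorem~4.19, supplies the planar version needing only a positive (finite) lower density. The paper treats it as a black box, so there is no in-text argument to compare against; your proposal is therefore being measured against the content of the Marstrand--Mattila--Preiss rectifiability criterion itself, not against anything in this manuscript.

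Taken on those terms, your sketch correctly identifies the shape of the argument (localize, produce an approximate tangent, invoke Federer), but the heart of the proof is left as a black box, as you yourself flag. Two concrete issues. First, the localization step assumes a uniform \emph{upper} density bound, but the hypothesis is only $0<\theta_*^1(\mu,z_0)<\infty$; no bound on $\theta^{1,*}$ is given, and one has to be careful that the compactness of $\{c_r T_{x,r}[\mu]\}_{r}$, the nonemptiness of $\Tan(\mu,x)$, and the ``tangents of tangents are tangents'' lemma are all available under the lower-density hypothesis alone (this is precisely the extra work Mattila does in the planar case, and it is why the paper cites [Mat95] Theorem~4.19 rather than the generic version). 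Second, and more seriously, the proposed mechanism for ruling out two distinct flat tangent directions does not work as stated: if rescalings along $r_k\downarrow 0$ and $r_k'\downarrow0$ converge to flat measures supported on different lines $L\neq L'$, a weak cluster point at intermediate scales need not ``carry mass near both $L$ and $L'$'' — it can perfectly well be a flat measure concentrated on some third line, with the limiting direction rotating continuously as the scale varies. The actual argument (Preiss, or Mattila's book, Chapter~15--16) is substantially more delicate: it does not proceed by first proving a unique tangent direction and then invoking Federer, but rather shows directly that ``$\mu$-close to flat at all small scales'' forces the support to lie on Lipschitz graphs, using the connectedness and compactness structure of $\Tan(\mu,x)$ together with a careful multiscale construction. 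That machinery is the content of the cited theorem, and your proposal leaves it unaddressed.
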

\begin{remark}
Preiss' theorem holds much more generally, that is, it holds for measures in $\R^n$ satisfying the  $d$-dimensional version of the density condition above. Moreover, Mattila proves in \cite{mat95}, Theorem 4.19, that for the case of Radon measures in the plane, assuming lower density positive is enough in the above theorem.
\end{remark}

With Preiss' Theorem at hand, our tasks are clear: if we show that any tangent measure $\nu$ (of a Borel measure $\mu$ which satisfies \eqref{e:up-density}, \eqref{e:low-density} and \eqref{e:pv}) is $\Omega$-symmetric and Ahlfors $1$-regular, then we are done.

\begin{lemma} \label{l:tangent-sym}
Let $\nu  \in \Tan(\mu, z_0)$, where $\mu$ (and thus $K$) satisfies the hypotheses of Theorem \ref{t:singular-integrals}. Then $\nu$ is $\Omega$-symmetric.
\end{lemma}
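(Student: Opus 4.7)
The plan is to deduce $\Omega$-symmetry of $\nu$ from two ingredients: the weak convergence $c_i T_{z_0, r_i}[\mu] \rightharpoonup \nu$ combined with the homogeneity $K(tx) = t K(x)$ for $t > 0$, and the cancellation provided by the existence of the principal value in \eqref{e:pv}. By Lemma \ref{lemma:SYM_borelequiv} it suffices to prove that
\begin{align*}
\int K(z-y)\phi(|z-y|^2) \, d\nu(y) = 0
\end{align*}
for every $z \in \spt\nu$ and every smooth $\phi$ compactly supported in $(0,\infty)$. Let $G$ be the $\mu$-full-measure set on which \eqref{e:up-density}, \eqref{e:low-density}, and \eqref{e:pv} all hold; set $T_\epsilon \mu(z_0) := \int_{|z_0-x| > \epsilon} K(z_0-x)/|z_0-x|^2 \, d\mu(x)$, so that $L = L(z_0) := \lim_{\epsilon \downarrow 0} T_\epsilon\mu(z_0)$ is finite on $G$.

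First I would treat the case $z = 0$, fixing $z_0 \in G$. Unpacking the weak convergence and using $K((z_0 - x)/r_i) = r_i^{-1} K(z_0 - x)$,
\begin{align*}
\int K(-y)\phi(|y|^2) \, d\nu(y) & = \lim_i \tfrac{c_i}{r_i} \int K(z_0-x)\, \phi\!\left(\tfrac{|z_0 - x|^2}{r_i^2}\right) d\mu(x) \\
& = \lim_i c_i r_i \int \frac{K(z_0 - x)}{|z_0 - x|^2}\, \psi\!\left(\tfrac{|z_0 - x|^2}{r_i^2}\right) d\mu(x),
\end{align*}
where $\psi(s) := s\phi(s)$ is smooth, supported in a compact subinterval of $(0,\infty)$, and vanishes at $0$. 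The elementary identity $\psi(t) = \int_0^\infty \psi'(s)\, \chi_{\{t > s\}}\, ds$ together with Fubini rewrites the last integral as $\int_0^\infty \psi'(s)\, T_{r_i\sqrt{s}}\mu(z_0)\, ds$. As $r_i \to 0$, $T_{r_i\sqrt{s}}\mu(z_0) \to L$ for each $s > 0$, and the upper density bound \eqref{e:up-density} ensures that $|T_\epsilon \mu(z_0)|$ is bounded uniformly for small $\epsilon$; dominated convergence on the compact support of $\psi'$ then gives the integral $\to L \int_0^\infty \psi'(s)\, ds = 0$. Since the density bounds force $c_i \sim 1/r_i$, the prefactor $c_i r_i$ is bounded, and the entire limit vanishes.

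To extend the symmetry to an arbitrary $z \in \spt\nu$, I would invoke the standard translation-invariance property of tangent measures (see, e.g., \cite{mattila}, Ch.\ 14): for $\mu$-a.e.\ $z_0$, whenever $\nu \in \Tan(\mu, z_0)$ and $z \in \spt\nu$, the translated measure $\nu_z(A) := \nu(A + z)$ again lies in $\Tan(\mu, z_0)$. Applying the $z = 0$ argument to $\nu_z$ in place of $\nu$ then yields
\begin{align*}
0 = \int K(-y)\phi(|y|^2) \, d\nu_z(y) = \int K(z - y)\phi(|z - y|^2) \, d\nu(y),
\end{align*}
which is the required identity. The main technical obstacle is this final paragraph: while the translation principle is a classical fact of tangent-measure theory, one must verify that the $\mu$-null exceptional set arising from the translation principle can be absorbed into the $\mu$-null exceptional set $G^c$ where \eqref{e:pv} fails, so that at $\mu$-a.e.\ $z_0 \in G$ both facts hold simultaneously; once this bookkeeping is done, the $z=0$ computation reduces to the homogeneity of $K$ and a single radial integration by parts.
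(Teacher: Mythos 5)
Your overall architecture matches the paper's: first prove that tangent measures are $\Omega$-symmetric at the origin, then use the translation-invariance of the tangent-measure class (Mattila, Thm.~14.16, our Lemma \ref{l:translation}) to symmetrize at every point of $\spt\nu$. Where you genuinely diverge is the proof of symmetry at $0$. The paper's Sublemma \ref{sl:tan-0-sym} first upgrades \eqref{e:pv} to the annular ``Cauchy criterion'' \eqref{e:pv2}, then invokes Egorov to get uniform vanishing over a compact set, and finally appeals to the weak-convergence passage lemma \eqref{e:weak-fact} to obtain $\int_{B(0,R)\setminus B(0,r)} K(y)|y|^{-2}\,d\nu(y)=0$. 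You instead test the weak limit against a smooth compactly supported radial cutoff, use the $1$-homogeneity $K(y/r_i)=r_i^{-1}K(y)$ to absorb the scaling, and then the slick identity $\psi(t)=\int_0^\infty \psi'(s)\chi_{\{t>s\}}\,ds$ together with Fubini to rewrite the blown-up integral as $\int_0^\infty \psi'(s)\,T_{r_i\sqrt s}\mu(z_0)\,ds$. Convergence of the principal value then kills the limit because $\int_0^\infty\psi' = 0$. This is cleaner: it is purely pointwise at a fixed $z_0$ where the PV exists and dispenses entirely with Egorov and with the boundary-mass hypotheses in \eqref{e:weak-fact}. What you lose is a little transparency about why tangent-measure Ahlfors regularity (Lemma \ref{l:ADR-tangents}) enters; in the paper it controls spherical mass so that \eqref{e:weak-fact} applies, whereas in your argument the manifest role of that lemma is relegated to the final approximation step (passing from smooth cutoffs $\phi$ back to indicators, where one needs $\nu(\partial B(x,r))=0$).

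Two small points to tighten. First, your justification that $|T_\epsilon\mu(z_0)|$ stays bounded for small $\epsilon$ should not be attributed to the upper density bound \eqref{e:up-density}; it is simply the fact that a sequence converging to the finite number $L(z_0)$ is eventually bounded, which is exactly what \eqref{e:pv} provides at points of $G$. Second, invoking Lemma \ref{lemma:SYM_borelequiv} at the outset is slightly off target: that lemma is an equivalence between the ball condition and the weighted condition for \emph{all} admissible Borel $\phi$, so establishing the weighted identity only for smooth compactly supported $\phi$ does not literally fall under its hypotheses. One either needs the standard mollification argument (approximating $\chi_{[0,1]}$ by smooth cutoffs, using $\nu(\partial B)=0$ and local integrability of $|y|^{-1}$ against the Ahlfors-regular $\nu$), or else should parallel the paper and state the approximation as a separate step rather than a corollary of Lemma \ref{lemma:SYM_borelequiv}. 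Neither issue is a gap in substance; both are easy to repair.
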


The following are standard properties of tangent measures. We will need them to prove Lemma \ref{l:tangent-sym}.
\begin{lemma}[\cite{preiss}, Theorem 2.5] \label{l:Tan-non-empty}
Let $\mu$ be a Radon measure satisfying \eqref{e:up-density} and \eqref{e:low-density} for $\mu$-almost all $x \in \C$. Then
\begin{align} \label{e:tan-not-empty}
    \Tan(\mu, x) \neq  \emptyset. 
\end{align}
Moreover, if $\nu \in \Tan(\mu, x)$, then 
\begin{align} \label{e:0-in-spt}
    0 \in \spt(\nu).
\end{align}
\end{lemma}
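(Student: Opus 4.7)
The plan is to prove both parts by a standard two-step compactness argument for Radon measures, combined with the Portmanteau characterisation of weak convergence. I would first fix $x$ in the full-measure set where both density hypotheses hold, obtaining positive constants $c_x, C_x$ and a scale $s_0 > 0$ such that $c_x s \leq \mu(B(x,s)) \leq C_x s$ for every $s \leq s_0$. Given any sequence $r_i \downarrow 0$, I would set $c_i := r_i^{-1}$ and $\nu_i := c_i T_{x,r_i}[\mu]$. For each fixed $R > 0$, once $R r_i \leq s_0$ we have
\begin{equation*}
\nu_i(B(0,R)) = \frac{\mu(B(x,R r_i))}{r_i} \leq C_x R,
\end{equation*}
so $\{\nu_i\}$ has mass uniformly bounded on every compact set (once $i$ is large enough). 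Weak-$\ast$ compactness of Radon measures on $\C$ (Banach--Alaoglu applied to $C_c(\C)^\ast$, together with a diagonal extraction across an exhausting family of compact sets) then delivers a subsequential weak limit $\nu$, which by construction belongs to $\Tan(\mu,x)$, proving \eqref{e:tan-not-empty}.

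For the support claim, I would fix $\nu \in \Tan(\mu,x)$, realised as a weak limit of some $\nu_{i_k}$ as above, and choose $\epsilon > 0$. The lower density bound gives, for all $k$ large enough,
\begin{equation*}
\nu_{i_k}\bigl(\overline{B}(0,\epsilon/2)\bigr) \geq \frac{\mu(B(x,\epsilon r_{i_k}/2))}{r_{i_k}} \geq \frac{c_x \epsilon}{2}.
\end{equation*}
The direction of the Portmanteau theorem for weakly convergent Radon measures stating $\nu(F) \geq \limsup_k \nu_{i_k}(F)$ for compact $F$ would then yield $\nu(\overline{B}(0,\epsilon/2)) \geq c_x\epsilon/2 > 0$, and in particular $\nu(B(0,\epsilon)) > 0$. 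Since $\epsilon$ was arbitrary, $0 \in \spt(\nu)$.

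The only mild technical point is keeping the two directions of Portmanteau straight at each stage: the upper density controls open balls, which is what is needed for local uniform boundedness and the extraction of a weak limit, whereas the lower density must instead be applied to a \emph{closed} ball, so that the positive lower bound is preserved under the weak limit. No feature of the kernel $K$ is used in this step — the result is a soft consequence of the two-sided density bounds for $\mu$.
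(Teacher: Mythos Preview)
The paper does not supply its own proof of this lemma; it cites Preiss and refers to Mattila's book, noting that the doubling condition $\limsup_{r\downarrow 0}\mu(B(x,2r))/\mu(B(x,r))<\infty$ (which follows from the two density bounds) suffices. Your compactness-plus-Portmanteau argument is the standard one and is essentially what those references contain.

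There is, however, one genuine gap in the second half. The definition of $\Tan(\mu,x)$ in the paper allows \emph{arbitrary} positive normalising constants $c_i$, not just $c_i=r_i^{-1}$. Your support argument silently carries over the choice $c_{i_k}=r_{i_k}^{-1}$ from part one (``as above''), and the displayed lower bound
\[
\nu_{i_k}\bigl(\overline{B}(0,\epsilon/2)\bigr)\geq \frac{\mu(B(x,\epsilon r_{i_k}/2))}{r_{i_k}}
\]
uses this explicitly. For a general $\nu\in\Tan(\mu,x)$ you have no a priori control on $c_i$, so the inequality is unjustified as written.

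The fix is short and uses both density hypotheses together. Assuming $\nu\neq 0$ (the zero measure is excluded, or else $0\in\spt(\nu)$ fails trivially), pick $R>0$ with $\nu(B(0,R))>0$. Lower semicontinuity on open sets gives $\liminf_i c_i\,\mu(B(x,Rr_i))\geq \nu(B(0,R))>0$, while the upper density bound gives $\mu(B(x,Rr_i))\leq C_x R r_i$ for large $i$. Combining, $c_i r_i\geq \delta$ for some $\delta>0$ and all large $i$. Now your closed-ball estimate becomes
\[
\nu_{i_k}\bigl(\overline{B}(0,\epsilon/2)\bigr)=c_{i_k}\,\mu\bigl(\overline{B}(x,\epsilon r_{i_k}/2)\bigr)\geq c_{i_k}\cdot c_x\,\frac{\epsilon r_{i_k}}{2}\geq \frac{c_x\,\delta\,\epsilon}{2}>0,
\]
and the Portmanteau step goes through. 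This is exactly the content of the fact the paper records separately as \eqref{e:tan-specific-form}.
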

For a quick proof of \eqref{e:tan-not-empty} under the additional hypothesis that $$\limsup_{r \downarrow 0} \frac{\mu(B(x, 2r))}{\mu(B(x,r))}< \infty,$$ see \cite{mattila}, Theorem 14.3. Note that this assumption is satisfied in our case since $\mu$ satisfies \eqref{e:up-density} and \eqref{e:low-density}.

Under this same assumption, one can easily see that \eqref{e:0-in-spt} holds. We refer the reader to (2) on page 187 of \cite{mattila}. 

The following fact will also be useful. If a measure $\mu$ satisfies \eqref{e:up-density} and \eqref{e:low-density}, then there is a sequence $r_i \downarrow 0$, so that
\begin{align} \label{e:tan-specific-form}
    \nu = c \, \lim_{i \to \infty } \frac{T_{x, r_i}}{r_i} [\mu],
\end{align}
where $c$ is some positive number. See (3), page 187 in \cite{mattila}. 

We will need the following fact, see (3), page 16 in \cite{mat95}. Suppose that $\mu_i \to \nu$ weakly, where $\mu_i$ and $\mu$ are all locally finite borel measures on $\C$; let $z_i$ be a sequence of points in $\C$ converging to $z$ and consider two radii $0<r<R<\infty$. If $\nu(\partial(B(z,r))= \nu(\partial(B(z,R))=0$, and $\vr_i$ is a sequence of continuous function converging uniformly to $\vr$, then
\begin{align} \label{e:weak-fact}
    \lim_{i \to \infty} \int_{B(z_i, R)\setminus B(z_i, r)} \vr_i \, d\mu_i = \int_{B(z,R) \setminus B(x,r)} \vr\, d\mu. 
\end{align}

We will also need the following fact. See \cite{mattila}, Lemma 14.7 for a proof. 
\begin{lemma}\label{l:ADR-tangents}
Let $\mu$ be a Radon measure on $\C$ satisfying \eqref{e:up-density} and \eqref{e:low-density}. Then for $\mu$-almost all point $x \in \C$, if $\nu \in \Tan(\mu,x)$, then $\nu$ is Ahlfors $1$-regular, with regularity constant depending on the upper and lower densities.
\end{lemma}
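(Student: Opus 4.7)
The plan is to exploit the fact that, although the density hypotheses only hold $\mu$-a.e., for each $k,m\in\N$ the set
\begin{align*}
E_{k,m}:=\left\{ z\in\spt(\mu) : \tfrac{1}{k}\, r\leq\mu(B(z,r))\leq k\, r\text{ for every }0<r\leq \tfrac{1}{m}\right\}
\end{align*}
exhausts $\spt(\mu)$ up to a $\mu$-null set as $k,m\to\infty$. A standard differentiation argument then shows that $\mu$-a.e.\ point $x\in\spt(\mu)$ lies in some $E_{k,m}$ and is a $\mu$-density point of it, i.e.\ $\mu(B(x,r)\setminus E_{k,m})/\mu(B(x,r))\to 0$ as $r\downarrow 0$. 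On $E_{k,m}$ the measure $\mu$ is \emph{uniformly} Ahlfors $1$-regular, and this uniformity is what will pass to the tangents.

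I would fix such a good $x$ and work with a tangent of the form given in \eqref{e:tan-specific-form}, namely $\nu=\lim_{i\to\infty}\nu_i$ weakly, where $\nu_i(A):=c\, r_i^{-1}\mu(r_i A+x)$ for some $c>0$ and $r_i\downarrow 0$. The key scaling identity is $\nu_i(B(y,r))=c\, r_i^{-1}\mu(B(x+r_i y,\,r_i r))$, which converts every question about $\nu$ into a statement about $\mu$ at small scales around $x$. For the \textbf{upper bound}, the inclusion $B(x+r_i y,\,r_i r)\subset B(x,\,r_i(|y|+r))$ combined with $x\in E_{k,m}$ (valid once $r_i(|y|+r)\leq 1/m$) gives $\nu_i(B(y,r))\leq ck(|y|+r)$, and \eqref{e:weak-fact} specialised to continuity radii of $\nu$ upgrades this to $\nu(B(y,r))\leq ck(|y|+r)$. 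Translating to an arbitrary $y\in\spt(\nu)$ and isolating the linear-in-$r$ behaviour is then a routine covering argument.

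The \textbf{lower bound} is the main obstacle, because the centres $x+r_i y$ of the shrinking balls need not lie in $\spt(\mu)$, let alone in $E_{k,m}$, so the uniform regularity on $E_{k,m}$ cannot be invoked directly. My plan is a density-point contradiction. Fix $y\in\spt(\nu)$ and $r>0$ with $\nu(\partial B(y,r/2))=0$; weak convergence at continuity radii gives $\nu_i(B(y,r/2))\to\nu(B(y,r/2))>0$, i.e.\ $\mu(B(x+r_i y,\,r_i r/2))\geq c'r_i$ for $i$ large. If we had $\mu(E_{k,m}\cap B(x+r_i y,\,r_i r/2))=0$ for infinitely many $i$, then
\begin{align*}
c'r_i\leq\mu\bigl(B(x+r_i y,\,r_i r/2)\setminus E_{k,m}\bigr)\leq\mu\bigl(B(x,\,r_i(|y|+r/2))\setminus E_{k,m}\bigr)=o(r_i)
\end{align*}
by the density-point property at $x$, a contradiction. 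Hence for $i$ large one can select $x_i\in E_{k,m}\cap B(x+r_i y,\,r_i r/2)$, and the defining regularity of $E_{k,m}$ at scale $r_i r/2\leq 1/m$ yields $\mu(B(x_i,r_i r/2))\geq r_i r/(2k)$. Since $B(x_i,r_i r/2)\subset B(x+r_i y,\,r_i r)$, this gives $\nu_i(B(y,r))\geq cr/(2k)$, and convergence of $\nu_i(B(y,r))$ to $\nu(B(y,r))$ along continuity radii then yields $\nu(B(y,r))\geq cr/(2k)$.

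Combining the two bounds produces Ahlfors $1$-regularity of $\nu$ with a constant depending only on $k$ and $c$, hence only on the upper and lower densities of $\mu$ at $x$, which is the claim. The non-routine part, as noted, is the lower bound: everything else is scaling and Portmanteau-type convergence, but the density-point trick is essential in order to locate, inside each blow-up ball, a genuine point of $\spt(\mu)$ where the \emph{uniform} Ahlfors bounds of $E_{k,m}$ are actually available.
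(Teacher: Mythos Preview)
The paper does not give its own proof of this lemma; it simply cites Mattila's book (Lemma~14.7 there). Your overall strategy---exhausting $\spt(\mu)$ by the sets $E_{k,m}$, passing to a $\mu$-density point, and using the scaling identity $\nu_i(B(y,r))=c\,r_i^{-1}\mu(B(x+r_iy,r_ir))$---is the standard one, and your treatment of the lower bound via the density-point contradiction is correct and cleanly explained.

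There is, however, a genuine gap in your upper bound. The crude inclusion $B(x+r_iy,r_ir)\subset B(x,r_i(|y|+r))$ only yields $\nu(B(y,r))\le ck(|y|+r)$, which is merely linear growth \emph{from the origin}, not Ahlfors upper regularity; and there is no ``routine covering argument'' that upgrades it. (A measure can satisfy $\nu(B(0,R))\le CR$ for all $R$ together with your lower bound $\nu(B(y,r))\ge cr/(2k)$ on its support and still fail to be upper $1$-regular: think of $\mathcal H^1$ restricted to intervals $[n,n+1/n]$ with density $n$ on the $n$-th interval.) The remedy is already in your hands: the density-point trick you use for the lower bound works equally well for the upper bound. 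Having found $x_i\in E_{k,m}\cap B(x+r_iy,\,r_ir/2)$, the inclusion now goes the other way,
\[
B(x+r_iy,\,r_ir)\subset B(x_i,\,2r_ir),
\]
so that $\mu(B(x+r_iy,r_ir))\le \mu(B(x_i,2r_ir))\le 2k\,r_ir$ and hence $\nu_i(B(y,r))\le 2ck\,r$. Passing to the limit along continuity radii gives $\nu(B(y,r))\le 2ck\,r$, which is the uniform upper bound you want. So the density-point argument is essential for \emph{both} inequalities, not just the lower one.
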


Let us now prove the following. 
\begin{sublemma} \label{sl:tan-0-sym}
Let $\mu$ be a Borel measure on $\C$ satisfying the hypotheses of Theorem \ref{t:singular-integrals}. Then for $\mu$-almost every $x \in \C$, if $\nu \in \Tan(\mu, x)$, $\nu$ is $\Omega$-symmetric at $0$.
\end{sublemma}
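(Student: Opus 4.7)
The plan is to show that $\int_{B(0,r)} K(-y)\, d\nu(y) = 0$ for every $r>0$. By Lemma~\ref{lemma:SYM_borelequiv} together with a standard approximation argument, it suffices to establish the smooth identity
$$
\int \frac{K(-y)}{|y|^2}\,\phi(|y|)\, d\nu(y) = 0 \qquad \text{for every } \phi \in C_c^\infty((0,\infty)).
$$
From this one recovers the ball version by setting $\phi_n(t) = t^2\psi_n(t)$ with $\psi_n \in C_c^\infty((0,\infty))$ converging boundedly to $\chi_{(0,r)}$, and then passing to the limit by dominated convergence; this is legitimate because $K(0)=0$, $|K(y)|=|y|$ is bounded on $B(0,r)$, and $\nu$ is locally finite (in fact Ahlfors $1$-regular by Lemma~\ref{l:ADR-tangents}).

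To establish the smooth identity, fix $\phi \in C_c^\infty((0,\infty))$ with $\supp \phi \subset [a,b]$, and fix $x$ in the full $\mu$-measure set on which \eqref{e:up-density}, \eqref{e:low-density} and \eqref{e:pv} all hold. By \eqref{e:tan-specific-form} we may write $\nu = c\,\lim_{i\to\infty} r_i^{-1}\,T_{x,r_i}[\mu]$ weakly for some $c>0$ and $r_i\downarrow 0$. Because $y\mapsto K(-y)|y|^{-2}\phi(|y|)$ is continuous with compact support inside $\C\setminus\{0\}$, weak convergence applies, and the change of variables $y=(z-x)/r_i$ combined with the degree-$1$ homogeneity $K((x-z)/r_i) = r_i^{-1}K(x-z)$ rewrites the target as
$$
\int \frac{K(-y)}{|y|^2}\phi(|y|)\, d\nu(y) = c\,\lim_{i\to\infty}\int \frac{K(x-z)}{|x-z|^2}\, \phi\!\left(\frac{|z-x|}{r_i}\right) d\mu(z).
$$

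The final step is a layer-cake rearrangement. Writing $\phi(s) = \int_a^s \phi'(t)\, dt$ (using $\phi(a)=\phi(b)=0$) and swapping the order of integration — Fubini is justified because $z$ is confined to the annulus $A(x,ar_i,br_i)$, where $|x-z|^{-1}\lesssim (ar_i)^{-1}$, and $\mu(B(x,br_i))\lesssim r_i$ for small $r_i$ by the finite upper density at $x$ — one obtains, after the $P_{br_i}$-contribution drops out via $\int_a^b \phi'=0$,
$$
\int \frac{K(x-z)}{|x-z|^2}\phi\!\left(\tfrac{|z-x|}{r_i}\right) d\mu(z) = \int_a^b \phi'(t)\,P_{t r_i}(x,\mu)\, dt,
$$
where $P_\epsilon(x,\mu) := \int_{\C\setminus B(x,\epsilon)} K(x-z)|x-z|^{-2}\,d\mu(z)$. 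Hypothesis \eqref{e:pv} gives $P_{tr_i}(x,\mu)\to L(x)$ pointwise in $t>0$, and since $P_\epsilon(x,\mu)$ has a finite limit as $\epsilon\downarrow 0$ it is bounded on some interval $(0,\epsilon_0]$; this yields the uniform majorant $\|\phi'\|_\infty (|L(x)|+1)\chi_{[a,b]}$ required for dominated convergence, so the limit equals $L(x)\int_a^b\phi'(t)\,dt = 0$. The only genuine obstacle is this uniform boundedness of the truncated principal values near $0$; once it is in hand, everything else is bookkeeping, and the sublemma follows.
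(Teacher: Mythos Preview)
Your proof is correct and takes a somewhat different route from the paper's. The paper first derives a Cauchy-type condition from \eqref{e:pv} (namely that $\int_{B(x,R)\setminus B(x,r)} K(x-y)|x-y|^{-2}\,d\mu(y)\to 0$ as $R,r\downarrow 0$), then passes to the tangent measure by applying weak convergence directly on \emph{annuli}, invoking \eqref{e:weak-fact} together with the fact that $\nu(\partial B(0,s))=0$ for all but countably many $s$ (courtesy of Lemma~\ref{l:ADR-tangents}); an Egorov step and a density-point reduction are also included, though their role is not made fully explicit. You instead test against smooth compactly supported functions, which makes the weak-convergence passage immediate and sidesteps the sphere-boundary issue entirely; the price is the layer-cake/Fubini identity, after which boundedness of the truncated integrals $P_\epsilon$ (an immediate consequence of their convergence) and dominated convergence finish the job. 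Both arguments ultimately rest on the same mechanism --- annulus or truncated integrals vanishing in the limit --- but your packaging avoids the Egorov detour and the countably-many-bad-radii bookkeeping. One small caveat: splitting $P_{tr_i}-P_{br_i}$ into two separate terms presupposes that each $P_\epsilon(x,\mu)$ is individually finite; this is the standard reading of hypothesis \eqref{e:pv}, but it is worth saying explicitly, or else one may simply keep the annulus form $\int_{A(x,tr_i,br_i)}$ throughout, which is manifestly finite and still tends to zero.
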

\begin{proof}
Note first that the existence of the principal value at $x \in \spt(\mu)$, i.e. \eqref{e:pv}, implies that for $0<r<R$, 
\begin{align} \label{e:pv2}
    \lim_{R \downarrow 0} \int_{B(x, R) \setminus B(x,r)} \frac{K(x-y)}{|x-y|^2} \, d\mu(y) = 0. 
\end{align}
For the sake of completeness, let us sketch a proof of this. Suppose that there exists sequences $R_i \downarrow 0$ and $r_i \downarrow 0$ with $r_i< R_i$ for all $i \in \N$, so that 
\begin{align} \label{e:div-series}
    \liminf_{i \to \infty} \av{ \int_{B(x,R_i) \setminus B(x,r_i)} \frac{K(x-y)}{|x-y|^2} \, d\mu(y) } \geq \epsilon_0,
\end{align}
for some positive number $\epsilon_0>0$. But then one can choose a sequence $\epsilon_i \downarrow 0$ such that
\begin{align*}
    \lim_{i \to \infty} \int_{\C \setminus B(x,\epsilon_i)} & = \int_{\C \setminus B(x,1)} + \lim_{k \to \infty} \sum_{i = 0}^k \int_{B(x,R_i) \setminus B(x,r_i)} \\
    & = \int_{\C \setminus B(x,1)} + \sum_{i=0}^\infty \int_{B(x, R_i) \setminus B(x,r_i)}. 
\end{align*}
But by \eqref{e:div-series}, this series diverges. Thus \eqref{e:pv2} holds. 

By Egorov's Theorem, it is enough to consider a compact subset $F \subset \spt(\mu)$ such that the convergence in \eqref{e:pv2} is uniform. Let $z_0 \in F$ be a point of density, and let $\nu \in \Tan(\mu, z_0)$; note that by Lemma \ref{l:ADR-tangents} we can assume that $\nu$ is Ahlfors regular; this in turn implies that whenever $x \in \spt(\nu)$, then the radii $s>0$ for which  $\nu(\partial B(x,s)) \neq 0$ must be finitely many. Then we see that for any $0<r<R< +\infty$ (but finitely many), we have that
\begin{align*}
    \int_{B(0,R)\setminus B(0,r)} \frac{K(y)}{|y|^2} \, d\nu(y) & \eqt{\eqref{e:weak-fact}} \lim_{i \to \infty} \int_{B(0,R) \setminus B(0,r)} \frac{K(y)}{|y|^2} \, \frac{T_{z_0, r_i}}{r_i}[\mu](y) \\
    & = \lim_{i \to \infty} \int_{B(z_0, r_i R) \setminus B(z_0, r_i r)} \frac{K(z_0-y)}{|z_0-y|^2}\, d\mu(y) = 0. 
\end{align*}
Then, using approximation, we can conclude that $\nu$ is $\Omega$-symmetric at $0$.
\end{proof}

The next well known property of tangents measures will let us conclude that $\nu \in \Tan(x,\mu)$ is actually an $\Omega$-symmetric Ahlfors regular measure.
\begin{lemma}[\cite{mattila},Theorem 14.16] \label{l:translation}
Let $\mu$ be a Radon measure on $\C$ which satisfies the density conditions \eqref{e:up-density} and \eqref{e:low-density} for $\mu$-almost every  $x \in \C$. Let $\nu \in \Tan(\mu, x)$. Then if $y \in \spt(\nu)$, then 
\begin{align*}
    T_{y, 1}[\mu] \in \Tan(\mu, x).
\end{align*}
\end{lemma}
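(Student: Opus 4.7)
The plan is to construct a sequence of rescalings converging weakly to $T_{y, 1}[\nu]$ with base points close to (but not equal to) $x$, and then to upgrade this into membership in $\Tan(\mu, x)$ by exploiting the $\mu$-a.e.\ qualifier in the hypothesis.

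First, since $\nu \in \Tan(\mu, x)$, there exist $c_i > 0$ and $r_i \downarrow 0$ with $c_i T_{x, r_i}[\mu] \to \nu$ weakly. Because $y \in \spt(\nu)$ is a support point of a weak limit of Radon measures, one can extract points $y_i \in \spt(T_{x, r_i}[\mu])$ with $y_i \to y$. Unwinding the definition $T_{x, r_i}[\mu](A) = \mu(r_i A + x)$ shows that $y_i \in \spt(T_{x, r_i}[\mu])$ is equivalent to $x_i := x + r_i y_i \in \spt(\mu)$, so that $x_i \to x$ since $r_i \downarrow 0$ and the $y_i$ are bounded.

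Next, a direct verification yields the key identity
\begin{align*}
T_{x_i, r_i}[\mu] = T_{y_i, 1}\bigl[T_{x, r_i}[\mu]\bigr],
\end{align*}
since both sides evaluate a Borel set $A$ to $\mu(r_i A + x + r_i y_i)$. Consequently, for any $\phi \in C_c(\C)$,
\begin{align*}
c_i \int \phi \, d T_{x_i, r_i}[\mu] = c_i \int \phi(z - y_i) \, d T_{x, r_i}[\mu](z).
\end{align*}
Uniform continuity of $\phi$ gives $\phi(\cdot - y_i) \to \phi(\cdot - y)$ uniformly, and combined with the weak convergence $c_i T_{x, r_i}[\mu] \to \nu$ the right-hand side converges to $\int \phi(z - y) \, d\nu(z) = \int \phi \, d T_{y, 1}[\nu]$. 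Hence $c_i T_{x_i, r_i}[\mu] \to T_{y, 1}[\nu]$ weakly.

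The main obstacle is the last step: the construction above realizes $T_{y, 1}[\nu]$ only as a weak limit of blow-ups with \emph{moving} base points $x_i \in \spt(\mu)$ approaching $x$, not as a blow-up strictly centered at $x$ as the definition of $\Tan(\mu, x)$ demands. To close this gap I would invoke the $\mu$-a.e.\ qualifier in the hypothesis: by a standard Vitali/Fubini-type covering argument exploiting the positive lower density \eqref{e:low-density} and finite upper density \eqref{e:up-density}, for $\mu$-a.e.\ $x$ every weak limit of the form $c_i T_{x_i, r_i}[\mu]$ with $x_i \in \spt(\mu)$, $x_i \to x$ and $r_i \downarrow 0$ automatically lies in $\Tan(\mu, x)$ (this is the measure-theoretic heart of Mattila's Theorem 14.16, cf.\ \cite{mattila}, Chapter 14). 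Combining this characterization with the convergence established above then places $T_{y, 1}[\nu]$ in $\Tan(\mu, x)$, completing the proof.
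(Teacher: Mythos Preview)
The paper does not supply its own proof of this lemma; it is quoted verbatim as Mattila's Theorem~14.16 with a citation, so there is nothing in the paper to compare against. (Note also that the displayed conclusion in the paper contains a typo: it reads $T_{y,1}[\mu]$, but from the way the lemma is applied in the proof of Lemma~\ref{l:tangent-sym} one sees that $T_{y,1}[\nu]$ is intended --- which is what you correctly argue for.)

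Your outline is the standard one, and it matches Mattila's argument: pick approximating support points $y_i \to y$, rewrite the blow-up at $x_i = x + r_i y_i$ as a translate of the blow-up at $x$, and pass to the limit. The only substantive step you leave as a black box is the last one --- showing that for $\mu$-a.e.\ $x$ any weak limit of $c_i T_{x_i, r_i}[\mu]$ with $x_i \to x$ already lies in $\Tan(\mu, x)$. You are right that this is precisely the content of Mattila's covering argument, and you flag it honestly; just be aware that without writing it out you have reproduced the easy half and deferred the hard half back to the reference.
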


\begin{proof}[Proof of Lemma \ref{l:tangent-sym}]
Let $\nu \in \Tan(\mu, x_0)$. By Lemma \ref{l:translation}, if $x \in \spt(\nu)$, then $T_{x, 1}[\nu] \in \Tan(\mu, x)$. Also, by Sublemma \ref{sl:tan-0-sym}, $0$ is a point of symmetry of $T_{x, 1}[\nu]$. Thus we see that, for $r>0$, 
\begin{align*}
    0 & = \int_{B(0, r)} \frac{K(y)}{|y|^2} \, d T_{x, 1}[\nu](z)
     = \int_{B(x,r)} \frac{K(x-y)}{|x-y|^2} \, d\nu(y).
\end{align*}
Hence $\nu$ is a symmetric measure. 
\end{proof}

\begin{proof}[Proof of Theorem \ref{t:singular-integrals}]
With the assumptions of the statement of the theorem, we see by Lemma \ref{l:tangent-sym} that all tangent measures of $\mu$ are $\Omega$-symmetric. Moreover, by Lemma \ref{l:ADR-tangents}, each of them is Ahlfors regular. Thus, any $\nu \in \Tan(\mu)$ satisfies the hypotheses of Theorem \ref{theorem:main}, and therefore we can conclude that 
\begin{align*}
    \nu = c \mathcal{H}^1|_L
\end{align*}
whenever $\nu \in \Tan(\mu)$, where $c>0$ and $L$ is a line. Hence Preiss' Theorem (Theorem  \ref{t:preiss}) let us conclude that $\mu$ is $1$-rectifiable. \end{proof}
\section{Appendix}
In this section, we prove Lemma \ref{l:III}.

For this computation, we will distinguish between real and imaginary part, rather than using the indices $1$ and $2$; also, we will do the computations using $y$ as variable (instead of $\xi$), to make the notation less cumbersome.

\begin{remark}
In this subsection we will use the complex argument function defined as 
$\arg(y) = \mathrm{atan2}(y_1, y_2)$, where $\mathrm{atan2}$ takes values in $[0, 2\pi]$ and is defined as
\begin{align}
    \mathrm{atan2}(y_1, y_2) := \begin{cases}
    \arctan\ps{\frac{y_2}{y_1}} \mbox{ when } y_1>0, y_2>0;
    \\
    \arctan\ps{\frac{y_1}{y_2}} + \pi \mbox{ when } y_1< 0, y_2 >0;\\
         \arctan\ps{\frac{y_1}{y_2}} + \pi \mbox{ when } y_1<0, y_2<0;\\
         \arctan\ps{ \frac{y_1}{y_2}} + 2 \pi \mbox{ when } y_1>0, y_2<0;\\
    0 \mbox{ when } y_1>0, y_2 = 0;\\
    \pi \mbox{ when } y_1<0, y_2 = 0;\\
    \frac{3}{2} \pi \mbox{ when } y_1=0, y_2<0;\\
    \mathrm{undefined when } y_1=0, y_2=0.
    \end{cases}
\end{align}
The fact that $\arg$ is undefined at the origin does not concern us, since we are integrating over an annulus. On the other hand, 
note that $\mathrm{atan2}$ has a discontinuity  on $(0, \infty) \subset \C$; this could cause some trouble since 
we will be taking derivatives of $\arg$.
But this won't be the case; indeed if $y$ lies in $(0, +\infty)$, we can reflect it and consider $-y$ instead, since $\Omega$ is odd. Thus we will assume, without loss of generality, that $y \notin (-\infty, 0)$ and hence taking derivative will make sense. 

\end{remark}
Using Lemma \ref{lemma:path-lift}, we can write
\begin{align}
    D^2\Re \Omega(y) = \begin{pmatrix}
    \partial_1^2 \cos(\omega(\arg(y))) & \partial_1 \partial_2 \cos(\omega(\arg(y))) \\
    \partial_2 \partial_1 \cos(\omega(\arg(y))) & \partial_2^2 \cos(\omega(\arg(y)))
    \end{pmatrix}
\end{align}
The same holds for $D^2\Im \Omega(y)$ when we replace $\cos$ with $\sin$. We compute
\begin{align*}
    \partial_1^2 \left( \cos(\omega(\arg(y))) \right) & = \left((D^2 \cos)(\omega(\arg(y))) \right)\, \left[ \omega'(\arg(y)) \, \partial_1 \arg(y) \right]^2 \\
    & + (D \cos(\omega(\arg(y))))  \omega''(\arg(y))\left( \partial_1\arg(y)\right)^2 \\
    & + (D \cos(\omega(\arg(y))))  \omega'(\arg(y)) \partial_1^2\arg(y).
\end{align*}
The same holds for $\partial_2^2 \cos(\omega(\arg(y)))$ if we replace  $\partial_1$ with $\partial_2$.
We also have the mixed derivatives
\begin{align*}
    \partial_1 \partial_2 \cos(\omega(\arg(y))) & = \left((D^2 \cos)(\omega(\arg(y))) \right) \left[ \omega'(\arg(y)) \left( \partial_1\arg(y)\right) \left( \partial_2\arg(y) \right) \right] \\
    & + \left( (D \cos) (\omega(\arg(y))) \right) \omega''(\arg(y)) \partial_1 \arg(y) \partial_2\arg(y) \\
    & + \left( (D \cos) (\omega(\arg(y))) \right) \omega'(\arg(y)) \partial_1\partial_2\arg(y).
\end{align*}
Clearly the same calculations hold when computing $D^2 \Im \Omega(y)$: we replace $\cos$ with $\sin$. 
Now we split the matrix $D^2 \Re \Omega(y)$ into three matrices, corresponding to the three terms in the sum above. We denote the first one as $M^\Re_1$, the second one as $M^\Re_2$ and the third one as $M^\Re_3$; explicitly, 
\begin{align*}
    & (M^\Re_1)_{1,1} = \left((D^2 \cos)(\omega(\arg(y))) \right)\, \left[ \omega'(\arg(y)) \, \partial_1 \arg(y) \right]^2 \\
    &   (M^\Re_1)_{1,2} = \left((D^2 \cos)(\omega(\arg(y))) \right) \left[ \omega'(\arg(y)) \left( \partial_1\arg(y)\right) \left( \partial_2\arg(y) \right) \right] = (M^\Re_1)_{2,1} \\
    & (M^\Re_1)_{2,2} = \left((D^2 \cos)(\omega(\arg(y))) \right)\, \left[ \omega'(\arg(y)) \, \partial_2 \arg(y) \right]^2.
\end{align*}
And also
\begin{align*}
    & (M^\Re_2)_{1,1} =(D \cos(\omega(\arg(y))))  \omega''(\arg(y))\left( \partial_1\arg(y)\right)^2 \\
    & (M^\Re_2)_{1,2} = \left( (D \cos) (\omega(\arg(y))) \right) \omega''(\arg(y)) \partial_1 \arg(y) \partial_2\arg(y) = (M^\Re_2)_{2,1} \\
    & (M^\Re_2)_{2,2} = (D \cos(\omega(\arg(y))))  \omega''(\arg(y))\left( \partial_2\arg(y)\right)^2. 
\end{align*}
And also
\begin{align*}
    & (M^\Re_3)_{1,1} = (D \cos(\omega(\arg(y))))  \omega'(\arg(y)) \partial_1^2\arg(y) \\
    & (M^\Re_3)_{1,2} = \left( (D \cos) (\omega(\arg(y))) \right) \omega'(\arg(y)) \partial_1\partial_2\arg(y) = (M^\Re_3)_{2,1}\\
    & (M^\Re_3)_{2,2} = (D \cos(\omega(\arg(y))))  \omega'(\arg(y)) \partial_2^2\arg(y).
\end{align*}
We will denote the corresponding matrices for $\Im \Omega(y)$ by $M^\Im_i$, $i=1,2,3$. 
Thus we may split the integral in \eqref{eq:GK_A3_3} as 
\begin{align*}
    & \sum_{i=1}^3 \frac{1}{r} \int \frac{ \left< \begin{pmatrix} 
    x^T M^\Re_i x \\
    x^T M^\Im_i x 
    \end{pmatrix}
    , \nu \right>
    }{r} 
    \phi \ps{\av{\frac{y}{r}}^2} \, d\mu(y) \\
    & =:A_{3,1} + A_{3,2} + A_{3,3}.
\end{align*}

\underline{Estimates for $A_{3,1}$.}
Let us introduce the following notation.
\begin{align*}
    & [11]_1 := \left[ \omega'(\arg(y)) \, \partial_1 \arg(y) \right]^2 \\
    & [12]_1 := \left[ \omega'(\arg(y)) \left( \partial_1\arg(y)\right) \left( \partial_2\arg(y) \right) \right] =: [21]_1\\
    & [22]_1 := \left[ \omega'(\arg(y)) \, \partial_2 \arg(y) \right]^2.
\end{align*}
Then we can write 
\begin{align*}
    M^\Re_1 = \begin{pmatrix}
    D^2\cos(\hdots)[11]_1 & D^2 \cos( \hdots)[12]_1 \\
    D^2 \cos(\hdots) [21]_1 & D^2 \cos( \hdots) [22]_1
    \end{pmatrix}
    ,
\end{align*}
where the dots stand for $\omega(\arg(y))$. 
We compute:
\begin{align} \label{e:A3-III-A31a}
  \left< 
  \begin{pmatrix}
  x^T M^\Re_1 x \\
  x^T M^\Im_1 x 
  \end{pmatrix}
  , \nu \right> & = -x_1^2 [11]_1 \left( \cos(\hdots) \nu_1 + \sin(\hdots) \nu_2 \right) \nonumber \\
  & -2x_1 x_2 [12]_1 \left( \cos(\hdots) \nu_1 + \sin(\hdots) \nu_2 \right) \nonumber \\
  & - x_2^2 [22]_1 \left( \cos(\hdots) \nu_1 + \sin(\hdots) \nu_2\right)\nonumber\\
  & = - \left< \Omega(y), \nu \right> \left( x_1^2[11]_1 + 2x_1x_2 [12]_1 + x_2^2 [22]_1\right).
\end{align}
Now, because
\begin{align*}
    \left| x_1^2[11]_1 + 2x_1x_2 [12]_1 + x_2^2 [22]_1\right|  \lesssim \frac{|x|^2}{|y|^2},
\end{align*}
we see that
\begin{align*}
    \av{\frac{1}{r} \int |y| \frac{ \left< \begin{pmatrix} 
    x^T M^\Re_1 x \\
    x^T M^\Im_1 x 
    \end{pmatrix}
    , \nu \right>
    }{r} 
    \phi \ps{\av{\frac{y}{r}}^2} \, d\mu(y)}
    \lesssim \frac{|x|^2}{r^4} \int_{B(0, r)} \dist(y, L_Q) \, d\mu(y),
\end{align*}
where we used again that $\dist(\xi, L_Q) \leq \dist(y, L_Q)$.  

\underline{Estimates for $A_{3,2}$.} Let us write
\begin{align*}
    & [11]_2 :=\omega''(\arg(y)) \left( \partial_1 \arg(y)\right)^2 \\
    & [12]_2 :=  \omega''(\arg(y))  \partial_1 \arg(y) \partial_2\arg(y)=:[21]_2 \\
    & [22]_2 :=  \omega''(\arg(y)) (\partial_2 \arg(y))^2.
\end{align*}
We have
\begin{align*}
    M^\Re_2 = \begin{pmatrix}
    D \cos( \hdots) [11]_2 & D \cos( \hdots) [12]_2 \\
    D \cos(\hdots) [21]_2 & D \cos (\hdots) [22]_2
    \end{pmatrix}.
\end{align*}
As before, we write out $\left< (x^T M^\Re_2 x, x^T M^\Im_2 x), \nu \right>$ to see that it equals to
\begin{align*}
    & x_1^2 [11]_2  \left( D \cos(\hdots) \nu_1 + D \sin (\hdots)  \nu_2 \right) \\
   &  +  x_2^2 [22]_2 \left( D \cos(\hdots) \nu_1 + D \sin(\hdots)\nu_1\right) \\
   & + 2 x_1 x_2 [12]_2 \left( D \cos (\hdots) \nu_1 + D \sin(\hdots) \nu_2 \right).
\end{align*}
Note that
\begin{align} \label{e:A3-a}
    [11]_2= \omega''(\arg(y)) (\partial_1 \arg(y))^2 = \omega'(\arg(y)) \frac{\omega''(\arg(y))}{\omega'(\arg(y))} \partial_1(\arg(y))^2,
\end{align}
and similarly for $[12]_2$, $[21]_2$ and $[22]_2$. 
For all $y \in A(0, r/2, 2r) \cap \spt(\mu)$, we see that 
\begin{align*}
\frac{\omega''(\arg(y))}{\omega'(\arg(y))} \leq C,
\end{align*}
for some constant $C=C(\omega)>0$ depending on $\omega$, and thus on $\Omega$. Indeed, because $\omega$ is bi-Lipschitz, we have that $\omega'(y) \geq (1+\delta_\Omega)^{-1}$. Moreover, $\omega''$ is continuous, since we assumed $\Omega$ to be twice continously differentiable. Because $y \in A(0, r/2, 2) \cap \spt(\mu)$ and using \eqref{eq:SYM_betasmall}, we see that $\arg(y)$ is contained in a compact subset of $[0,2\pi)$, and thus $\omega''(y)$ has a maximum.
Hence we see that
\begin{align*}
    & x^2_1[11]_2 (D\cos(\hdots)\nu_1 + D\sin(\hdots)\nu_2) \\
    &= x^2_1 \omega'(\arg(y)) \frac{\omega''(\arg(y))}{\omega'(\arg(y))} \partial_1(\arg(y))^2 \ps{ D\cos(\hdots) \nu_1 + D\sin(\hdots) \nu_2 } \\
    & = x^2_1\frac{\omega''(\arg(y))}{\omega'(\arg(y))} \partial_1\arg(y) \\& \enskip \enskip \times \ps{\left[ D\cos(\hdots) \omega'(\arg(y)) \partial_1(\arg(y))\right]\nu_1 + \left[ D\cos(\hdots) \omega'(\arg(y)) \partial_2(\arg(y))\right]\nu_2 } \\
    & = x^2_1\frac{\omega''(\arg(y))}{\omega'(\arg(y))} \partial_1\arg(y) \ps{\partial_1 \left[\cos(\hdots)\right]\nu_1 + \partial_1\left[\sin(\hdots)\right]\nu_2}
\end{align*}
Then, expanding out the inner product, re-arranging the terms, using \eqref{e:A3-a} and following the calculations above with the other terms $[12]_2$, $[21]_2$ and $[22]_2$, we see that
\begin{align*}
    \left< \left( x^T M^\Re_2 x, x^T M^\Im_2 x \right) , \nu \right> & = x_1 \frac{\omega''(\arg y)}{\omega'(\arg y)} \partial_1 \arg(y) \, x_1 \left( \partial_1 \left[\cos(\hdots)\right] \nu_1 + \partial_1 \left[\sin(\hdots)\right] \nu_2 \right) \\
    & + x_2 \frac{\omega''(\arg y)}{\omega'(\arg y)} \partial_2 \arg(y) \, x_2 \left( \partial_2 \left[ \cos(\hdots)\right] \nu_1 + \partial_2 \left[\sin(\hdots)\right] \nu_2 \right) \\
    & + x_1 \frac{\omega''(\arg y)}{\omega'(\arg y)} \partial_1 \arg(y) \, x_2 \left( \partial_2 \left[\cos (\hdots)\right] \nu_1 + \partial_2 \left[\sin(\hdots)\right] \, \nu_2 \right) \\
    & + x_2 \frac{\omega''(\arg y)}{\omega'(\arg y)} \partial_2 \arg(y) \, x_1 \left( \partial_1 \left[\cos(\hdots)\right] \nu_1 + \partial_1 \left[\sin(\hdots)\right] \nu_2 \right) \\
    & = \left< D\Omega(y)\cdot x', \nu \right> + \left< D \Omega (y) \cdot x'', \nu \right>,
\end{align*}
where 
\begin{align*}
    x' = \begin{pmatrix}
    x_1 \, \left( x_1 \frac{\omega''(\arg y)}{\omega'(\arg y)} \partial_1 \arg(y) \right) \\
    x_2 \, \left(x_2 \frac{\omega''(\arg y)}{\omega'(\arg y)} \partial_2 \arg(y)\right)
    \end{pmatrix}
\end{align*}
and
\begin{align*}
    x'' = 
    \begin{pmatrix}
    x_1 \left(x_2 \frac{\omega''(\arg y)}{\omega'(\arg y)} \partial_2 \arg(y)\right) \\
     x_2 \left( x_1 \frac{\omega''(\arg y)}{\omega'(\arg y)} \partial_1 \arg(y)\right)
     \end{pmatrix}.
\end{align*}
We see first that $\partial_2 \arg y = \frac{y_1}{|y|^2}$ and that $\partial_1 \arg y= \frac{-y_2}{|y|^2}$. Thus we have that 
\begin{align*}
    (x')_1 = - x_1 x_1 (-y_2 )\frac{\omega''(\arg y)}{|y|^2 \omega'(\arg y)},
\end{align*}
and similarly 
\begin{align*}
   (x')_2 = x_2 x_2 x_1 \frac{\omega''(\arg y)}{|y|^2 \omega'(\arg y)}. 
\end{align*}
By Taylor's theorem, we have that
\begin{align} \label{e:A3-2}
    & y_1 = \frac{|y|}{|x|}x_1 - |y| \sin(\zeta_1)( \theta_y - \theta_x)  \nonumber\\
    & y_2 = \frac{|y|}{|x|}x_2 + |y| \cos(\zeta_2)(\theta_y - \theta_x),
\end{align}
where $\zeta_1, \zeta_2 \in [\theta_y, \theta_x]$ and $\theta_y$ (resp. $\theta_x$) is the angle that $y$ (resp. $x$) makes with the real axis. 
Hence we may write 
\begin{align*}
    x' = \frac{1}{|y|^2}\begin{pmatrix}
    x_1 \left(x_1 (-x_2) \frac{|y|}{|x|} \frac{\omega''(\arg y)}{\omega'(\arg y)} \right)\\
    x_2 \left( x_2 x_1 \frac{|y|}{|x|} \frac{\omega''(\arg y)}{\omega'(\arg y)} \right)
    \end{pmatrix}
    + 
    \frac{1}{|y|^2}\begin{pmatrix}
    x_1 \left(x_1 |y| \sin(\zeta_1) (\theta_y-\theta_x)\frac{\omega''(\arg y)}{\omega'(\arg y)} \right) \\
    x_2 \left(x_2 |y| \cos(\zeta_2) (\theta_y-\theta_x)\frac{\omega''(\arg y)}{\omega'(\arg y)} \right)
    \end{pmatrix}.
\end{align*}
And also we have
\begin{align*}
    x'' = \frac{1}{|y|^2}\begin{pmatrix}
    x_1 \left( x_2 x_1 \frac{|y|}{|x|} \frac{\omega''(\arg y)}{\omega'(\arg y)} \right) \\
    x_2 \left( x_1 (-x_2) \frac{|y|}{|x|} \frac{\omega''(\arg y)}{\omega'(\arg y)} \right)
    \end{pmatrix}
    + 
   \frac{1}{|y|^2} \begin{pmatrix}
    x_1 \left(x_2 |y| \cos(\zeta_1) (\theta_y-\theta_x)\frac{\omega''(\arg y)}{\omega'(\arg y)} \right) \\
    - x_2 \left(x_2 |y| \sin(\zeta_2) (\theta_y - \theta_x)\frac{\omega''(\arg y)}{\omega'(\arg y)} \right)
    \end{pmatrix}
\end{align*}
Note that the first component of $x'$ and the first component of $x''$ cancel each other out. Moreover, we see that
\begin{align} \label{e:A3-III-A32-a}
    |y| |\theta_y- \theta_x| \lesssim \dist(y, L_Q).
\end{align}
Indeed, we because of the assumption \eqref{eq:SYM_betasmall}, we may suppose that $|\theta_y - \theta_x| \leq \frac{\pi}{4}$. Hence we have 
$|\theta_y-\theta_x|^2 \sim \sin(|\theta_y-\theta_x|)^2 + \sin(|\theta_y-\theta_x|/2)^4 \leq 2 \sin(|\theta_y-\theta_x|)^2$.
Recall that $|y| |D\Omega(y)| \lesssim 1$. Thus, recalling also that $|D\Omega(y)| \lesssim |y|^{-1}$, 
\begin{align} \label{e:A3-III-A32-b}
    & \av{ \frac{1}{r} \int |y|\frac{\left< \left( x^T M^\Re_2 x, x^T M^\Im_2 x \right) , \nu \right>}{r} \phi \ps{\av{\frac{y}{r}}^2} \, d\mu(y) } \nonumber \\
    & \lesssim \frac{1}{r^2} \int |y|\frac{\omega''(\arg y)}{|y|^2 \omega'(\arg y)} |D \Omega(y)| |x|^2 \dist(y, L_Q) \phi \ps{\av{\frac{y}{r}}^2} \, d\mu(y) \nonumber\\
    & \leq C \frac{|x|^2}{r^4} \int_{B(0,r)} \dist(y, L_Q)\, d\mu(y).
\end{align}

\underline{Estimates for $A_{3,3}$}.
We see that we may re write $\left< \left(x^T M_3^\Re x, x^T M^\Im_3 x\right), \nu \right>$ as 
\begin{align*}
    & \left( D \cos (\hdots) \nu_1 + D \sin(\hdots) \nu_2 \right) \omega'(\arg y) |y|^{-4} \\
    & \times \left[ (x_1^2 (2 y_1 y_2) - 2 x_1 x_2 y^2_1) + (x_2^2 (-2y_1 y_2) + 2x_1 x_2 y_2^2\right] \\
    & =: \left( \hdots \right) \times [\mathcal{A} + \mathcal{B}].
\end{align*}
If we expand as above $y_1$ and $y_2$ in terms of $x_1 $ and $x_2$ respectively, we obtain that
\begin{align*}
    \mathcal{A} & = x_1^2 \frac{|y|^2}{|x|} x_2 \cos(\zeta)(\theta_x - \theta_y) - x_1^2 \frac{|y|^2}{|x|} x_2 \sin(\zeta)(\theta_x - \theta_y) - x_1^2 |y|^2 \sin(\xi) \cos(\zeta) (\theta_x - \theta_y)^2\\
    & + 4 x_1 x_2 \frac{|y|^2}{|x|}x_1 \sin(\zeta) (\theta_x - \theta_y) - 2 x_1 x_2 |y|^2 \sin(\zeta)^2 (\theta_x - \theta_y)^2 =: \sum_{j=1}^5 \mathcal{A}_i.
\end{align*}
It is immediate to see that $\mathcal{B}$ has the very same structure. Notice that for $i=1,2,4$ we have that $|\mathcal{A}_i|, |\mathcal{B}_i| \lesssim |y||x|^2 \dist(y, L_Q)$; on the other hand, for $i=3,5$, we have that $|\mathcal{A}_i|,|\mathcal{B}_i|\lesssim |x|^2 \dist(y,L_Q)^2 \lesssim |y||x|^2 \dist(y, L_Q)$.
Thus 
\begin{align*}
    & \frac{1}{r^2}\int_{B(0,r)} |y|\av{\left( D \cos (\hdots) \nu_1 + D \sin(\hdots) \nu_2 \right) \omega'(\arg y) |y|^{-4} }|y||x|^2 \dist(y, L_Q) \, d \mu(y) \\
     & \lesssim \frac{|x|^2}{r^4} \int_{B(0,r)} \dist(y, L_Q) \, d\mu(y).
\end{align*}
This shows the desired bound for $A_{3,3}$, and thus gives a proof of Lemma \ref{l:III}.

\end{document}